\newcommand{\R}{\mathbb{R}}
\newcommand{\inv}{^{-1}}
\newcommand{\eps}{\varepsilon}
\newcommand{\del}{\nabla}
\newcommand{\lap}{\Delta}
\newcommand{\bd}{\partial}
\newcommand{\cc}{\mathfrak c}
\renewcommand{\div}{\operatorname{div}}
\newcommand{\grad}{\del}
\newcommand{\vol}{\operatorname{Vol}}
\newcommand{\area}{\operatorname{Area}}
\newcommand{\ric}{\operatorname{Ric}}
\newcommand{\cp}{\operatorname{cap}}
\newcommand{\m}{m_{\operatorname{ADM}}}
\theoremstyle{plain}
\newtheorem{theorem}{Theorem}
\newtheorem{corollary}[theorem]{Corollary}
\newtheorem{prop}[theorem]{Proposition}
\newtheorem{lem}[theorem]{Lemma}
\theoremstyle{definition}
\newtheorem{defn}[theorem]{Definition}
\newtheorem{rem}[theorem]{Remark}
\newtheorem{ex}[theorem]{Example}
\begin{document}
\title{Mass, Conformal Capacity, and the Volumetric Penrose Inequality} 
\author{Liam Mazurowski}
\author{Xuan Yao}

\begin{abstract} Let $\Omega$ be a smooth, bounded subset of $\mathbb{R}^3$ diffeomorphic to a ball. Consider $M = \mathbb{R}^3 \setminus \Omega$ equipped with an asymptotically flat metric $g = f^4 g_{\text{euc}}$, where  $f\to 1$ at infinity.  Assume that $g$ has non-negative scalar curvature and that $\Sigma = \partial M$ is a minimal 2-sphere in the $g$ metric. We prove a sharp inequality relating the ADM mass of $M$ with the conformal capacity of $\Omega$. As a corollary, we deduce a sharp lower bound for the ADM mass of $M$ in terms of the Euclidean volume of $\Omega$.  We also prove a stability type result for this ``volumetric Penrose inequality.'' The proofs are based on a monotonicity formula holding along the level sets of a 3-harmonic function. 
\end{abstract}

\maketitle

\section{Introduction} 

Let $(M^3,g)$ be an asymptotically flat manifold with one end. In intuitive terms, this means that the end of $M$ is diffeomorphic to $\R^3$ minus a ball, and the metric $g$ converges to the Euclidean metric at infinity.  Asymptotically flat manifolds play an important role in the study of general relativity. Associated to any asymptotically flat manifold $M$ is a quantity called the ADM mass, which measures the total mass of the gravitational system modeled by $M$.  Many important results in mathematical relativity center around proving lower bounds for the ADM mass.  In particular, suppose that $M$ has non-negative scalar curvature and that $\bd M$ is an outermost minimal surface.  Then the positive mass theorem says that $\m \ge 0$,  the Penrose inequality says that 
\[
\m \ge \sqrt{\frac{\area(\bd M)}{16\pi}},
\]
and the mass to $p$-capacity inequalities give a sharp lower bound for $\m$ in terms of the $p$-capacity of $\bd M$ for $1 < p < 3$. See the next subsection for an  extended discussion. 

Now consider the special case where $\Omega$ is a smooth, bounded domain in $\R^3$ diffeomorphic to a ball, and  $M = \R^3 \setminus \Omega$, and $g = f^4 g_{\text{euc}}$ where $f > 0$ is a smooth function with $f\to 1$ at infinity.
 The purpose of this paper is to prove a mass to capacity inequality for such manifolds $M$ in the conformally invariant case $p = 3$. Given $\Omega \subset \R^3$, we let $\cc(\Omega)$ denote the conformal capacity of $\Omega$. To the authors' knowledge, the conformal capacity for subsets of $\R^n$ with $n\ge 3$ was first introduced in \cite{colesanti2005brunn}. It is a natural generalization of the well-known and thoroughly studied logarithmic capacity for subsets of $\R^2$. See Definition \ref{conformal-capacity} for the precise definition. 
\begin{theorem}
\label{main}
Let $\Omega\subset \R^3$ be a smooth, bounded domain diffeomorphic to a ball.  Consider an asymptotically flat manifold $(M,g)$ where $M = \R^3\setminus \Omega$ and $g = f^4 g_{\text{euc}}$ for a smooth function $f > 0$ with $f \to 1$ at infinity. Assume that $M$ has non-negative scalar curvature and that $\Sigma = \bd M$ is a minimal 2-sphere in the $g$ metric. Then \[
\m \ge 2\cc(\Omega).
\]
Equality holds if and only if $M$ is Schwarzschild.  
\end{theorem}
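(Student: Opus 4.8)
The plan is to prove the inequality via a monotonicity formula along the level sets of the $3$-harmonic potential of $M$, exploiting the conformal invariance of the $3$-Laplacian in dimension three.

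\emph{Setup.} I would first let $u$ be the proper $3$-harmonic function on $M$ with $u = 0$ on $\Sigma = \partial M$ and logarithmic growth $u(x) = \log|x| + \gamma + o(1)$ as $|x|\to\infty$, recording its basic regularity and the structure of its critical set. The key structural point is that the $3$-energy $\int|\nabla u|^3\,dV$ is conformally invariant in dimension three, so $u$ is simultaneously $3$-harmonic for $g$ and for $g_{\text{euc}}$; thus $u$ is exactly the Euclidean $3$-capacitary potential of $\Omega$, and by Definition \ref{conformal-capacity} its Robin constant satisfies $\cc(\Omega) = e^{-\gamma}$. At the same time the conformal factor $f = 1 + \tfrac{\m}{2|x|} + o(|x|^{-1})$ carries the mass, and the hypothesis $R_g \ge 0$ is equivalent to $\Delta_{\text{euc}} f \le 0$. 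Two further identities drive everything: the flux $\int_{\Sigma_t}|\nabla u|^2\,d\sigma$ along $\Sigma_t = \{u = t\}$ is conformally invariant and hence constant, equal to $4\pi$; and on each regular level set one has $\nabla^2 u(\nu,\nu) = -\tfrac12 H|\nabla u|$, relating the $g$-mean curvature $H$ of $\Sigma_t$ to the second normal derivative of $u$.

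\emph{Monotonicity.} Next I would introduce a Geroch/Hawking-type functional $\mathcal F(t)$ on the foliation $\{\Sigma_t\}$, assembled from the Willmore energy $\int_{\Sigma_t} H^2\,d\sigma$, the conserved flux, and the level parameter, with powers of $|\nabla u|$ chosen so that it is constant along the Schwarzschild foliation. Using the first-variation formulas for $H$ and $d\sigma$ under the flow with normal speed $1/|\nabla u|$, the $3$-harmonic identity above, the Gauss equation, and Gauss--Bonnet $\int_{\Sigma_t} K\,d\sigma = 4\pi$ --- valid because $\Omega$ is a ball, so that the relevant level sets are spheres --- I would establish $\mathcal F'(t) \ge 0$. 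As in Geroch monotonicity for inverse mean curvature flow, the two sources of nonnegativity are the scalar curvature term, controlled by $R_g \ge 0$, and the trace-free second fundamental form term $|\mathring A|^2 \ge 0$.

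\emph{Boundary values and conclusion.} I would then evaluate the endpoints. As $t\to\infty$ the level sets converge to large coordinate spheres of radius $\sim e^{t}\cc(\Omega)$, and expanding $\mathcal F$ against $f = 1 + \tfrac{\m}{2|x|} + \dots$ identifies $\lim_{t\to\infty}\mathcal F(t) = \m$. At $t = 0$ the surface $\Sigma_0 = \Sigma$ is minimal, so $H \equiv 0$; substituting this into $\mathcal F(0)$ and using the normalization $\cc(\Omega) = e^{-\gamma}$ gives $\mathcal F(0) = 2\,\cc(\Omega)$. Monotonicity then yields
\[
\m = \lim_{t\to\infty}\mathcal F(t) \ge \mathcal F(0) = 2\,\cc(\Omega).
\]
For rigidity, equality forces $\mathcal F' \equiv 0$, which by the monotonicity computation forces $R_g \equiv 0$, every $\Sigma_t$ totally umbilic, and $|\nabla u|$ constant on each level; this symmetry propagates to show that $(M,g)$ is rotationally symmetric with the Schwarzschild profile.

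\emph{Main obstacle.} The crux is the monotonicity $\mathcal F'(t) \ge 0$. Beyond the Bochner-type computation, it requires a careful treatment of the critical set of the $3$-harmonic function $u$, where the level sets are singular: one must invoke the regularity theory for $p$-harmonic functions and a Sard-type argument to ensure that almost every level set is a smooth sphere and that the critical set contributes no boundary terms in the integrations by parts. A secondary difficulty is the exact identification $\mathcal F(0) = 2\,\cc(\Omega)$, which ties the horizon data to the Robin constant $\gamma$ and depends on the precise form of Definition \ref{conformal-capacity}.
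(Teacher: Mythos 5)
Your strategy is, at its core, the same as the paper's: both arguments run a monotonicity formula along the level sets $\Sigma_t$ of the $3$-harmonic potential $u$, calibrated so that the monotone quantity is constant on Schwarzschild, and both read off the inequality by evaluating at the minimal boundary (where $H \equiv 0$) and at infinity (where the mass appears). Your preliminary identities are also correct and are exactly the ones the paper uses: conformal invariance of $\Delta_3$, constancy of the flux $\int_{\Sigma_t}|\nabla u|^2\,da = 4\pi$, and $\nabla^2 u(\nu,\nu) = -\tfrac12 H|\nabla u|$. The most serious gap is in your treatment of the critical set, which you correctly flag as the crux but propose to resolve by ``regularity theory for $p$-harmonic functions and a Sard-type argument.'' This step would fail: $3$-harmonic functions are only $C^{1,\alpha}$ near their critical points, while Sard's theorem for a real-valued function on a $3$-manifold requires $C^3$ regularity, so one cannot conclude that the critical values form a null set. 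The paper is explicit about this obstruction and instead regularizes the equation: it works with solutions $u_\eps$ of $\operatorname{div}\bigl(\sqrt{|\nabla u_\eps|^2+\eps^2}\,\nabla u_\eps\bigr)=0$ (which are smooth, so Sard applies to them), proves an approximate monotonicity with an error term $f(\eps)\to 0$, and passes to the limit $\eps\to 0$, following Agostiniani--Mantegazza--Mazzieri--Oronzio and DiBenedetto. Without such a scheme your monotonicity is not established on all of $M$.

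Two further steps are unspecified or unjustified. First, the monotone functional $\mathcal F$ is never written down, and finding it is the main content of the proof; the paper's choice is $Q(t) = 8\pi(e^t+2-e^{-t}) - (e^t+1)^2 e^{-t}U(t)$ with $U(t) = \int_{\Sigma_t} H_g|\nabla^g u|\,da_g$, which is built from the term $\int H|\nabla u|$, \emph{linear} in $H$ (Cauchy--Schwarz, $U^2 \le 4\pi\int H^2$, is what brings in the Willmore-type quantity), and the Schwarzschild calibration enters through explicit $e^{\pm t}$ weights rather than powers of $|\nabla u|$. Second, your endpoint computation at infinity relies on the pointwise expansion $f = 1+\tfrac{\m}{2|x|}+o(|x|^{-1})$, which does not follow from the assumed decay $|f-1|\le C|x|^{-1}$, $|f_i|\le C|x|^{-2}$, $|f_{ij}|\le C|x|^{-3}$ (nor from $\Delta_{\text{euc}}f\le 0$). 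The paper avoids this by invoking Bartnik's theorem to express the mass as the flux $\lim_{s\to\infty}\int_{\Sigma_s} f_\nu\,da = -2\pi\m$, and it proves only the one-sided bound $\limsup_{t\to\infty}e^t(8\pi-U(t)) \le 8\pi\,\m/\cc(\Omega)$ -- which is all the argument needs -- rather than the exact limit you assert.
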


As a corollary, we can deduce the following volumetric Penrose inequality relating the ADM mass of a conformally flat, asymptotically flat manifold to the Euclidean volume of $\Omega$.    

\begin{corollary}
\label{vpi}
Let $\Omega\subset \R^3$ be a smooth, bounded domain diffeomorphic to a ball.  Consider an asymptotically flat manifold $(M,g)$ where $M = \R^3\setminus \Omega$ and $g = f^4 g_{\text{euc}}$ for a smooth function $f > 0$ with $f \to 1$ at infinity.  Assume that $M$ has non-negative scalar curvature and that $\Sigma = \bd M$ is a minimal 2-sphere in the $g$ metric. Then \[
\m \ge 2\left(\frac{3}{4\pi}\vol(\Omega)\right)^{1/3}. 
\]
Equality holds if and only if $M$ is Schwarzschild.  
\end{corollary}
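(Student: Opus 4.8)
The plan is to deduce Corollary \ref{vpi} from Theorem \ref{main} together with a sharp \emph{isocapacitary inequality} relating the conformal capacity to the Euclidean volume. Concretely, the first step is to prove that for any bounded domain $\Omega \subset \R^3$,
\[
\cc(\Omega) \ge \left(\frac{3}{4\pi}\vol(\Omega)\right)^{1/3},
\]
with equality precisely when $\Omega$ is a round ball. Granting this, the corollary follows immediately by concatenation: Theorem \ref{main} supplies $\m \ge 2\cc(\Omega)$, and the isocapacitary inequality then gives $\m \ge 2\left(\tfrac{3}{4\pi}\vol(\Omega)\right)^{1/3}$.

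To establish the isocapacitary inequality I would argue by symmetrization, exactly as in the classical fact that among planar sets of given area the disk minimizes logarithmic capacity; the conformal capacity is the direct $\R^3$ analogue of that quantity, being built from the conformally invariant $3$-energy $\int |\del u|^3$. Given an admissible potential $u$ for $\Omega$ — the appropriately renormalized $3$-harmonic function on $\R^3 \setminus \Omega$, which grows logarithmically at infinity since the radial $3$-harmonic functions are multiples of $\log|x|$ — I would pass to its Schwarz symmetrization $u^*$, whose superlevel sets are the balls of equal volume. The P\'olya--Szeg\H{o} inequality yields $\int |\del u^*|^3 \le \int |\del u|^3$, so $u^*$ realizes no larger energy for the ball $\Omega^* = B_{r^*}$ with $r^* = (\tfrac{3}{4\pi}\vol(\Omega))^{1/3}$ and $\vol(\Omega^*) = \vol(\Omega)$. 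Tracking the logarithmic renormalization converts this into $\cc(\Omega) \ge \cc(\Omega^*) = r^*$, where the value $\cc(B_r) = r$ comes from the explicit radial potential. The same inequality, with its equality case, is established in \cite{colesanti2005brunn}, where the conformal capacity was introduced, and may be cited directly. Equality in the P\'olya--Szeg\H{o} step forces the superlevel sets of $u$ to be concentric balls, hence $\Omega$ itself is a ball.

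Finally I would treat the rigidity statement. Equality in the corollary demands equality in both inputs: in Theorem \ref{main}, which forces $M$ to be Schwarzschild, and in the isocapacitary inequality, which forces $\Omega$ to be a round ball. These conditions are mutually consistent — the Schwarzschild end is exactly $\R^3 \setminus B$ with conformal factor $f = 1 + \m/(2|x|)$ over a ball $\Omega = B$ — so the equality case of the corollary is again ``$M$ is Schwarzschild,'' in agreement with Theorem \ref{main}. I expect the main obstacle to be precisely this rigidity bookkeeping: one must verify that the two sharp inequalities attain equality simultaneously and characterize the same geometry, and one must pin down the normalization so that the isocapacitary constant is exactly $3/(4\pi)$; the analytic heart, the P\'olya--Szeg\H{o}/isocapacitary estimate, is itself classical.
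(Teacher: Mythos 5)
Your proposal is correct and takes essentially the same route as the paper: both deduce the corollary by concatenating Theorem \ref{main} with the isocapacitary inequality $\cc(\Omega) \ge \cc(B_R) = R$ for the ball of equal volume (the paper simply cites \cite[Theorem 4.1(i)]{xiao2020geometrical} for this, rather than rederiving it via P\'olya--Szeg\H{o} symmetrization). The only difference is in the rigidity bookkeeping, where the paper is slightly more economical: equality in the corollary forces $\m = 2\cc(\Omega)$, so the rigidity of Theorem \ref{main} alone gives that $M$ is Schwarzschild (and hence $\Omega$ is a ball), with no need to invoke the equality case of the symmetrization step.
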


Volumetric Penrose inequalities of this nature were previously obtained by Schwartz \cite{schwartz2011volumetric} and Freire-Schwartz \cite{freire2014mass}. Compared with Corollary \ref{vpi}, their results hold in all dimensions and do not need $\Omega$ to be diffeomorphic to a ball. However, their results require extra assumptions about the conformal factor $f$ and the Euclidean geometry of $\Sigma$. 

We can also prove the following stability result for the volumetric Penrose inequality. 
Recall that the Fraenkel asymmetry of a set $\Omega \subset \R^3$ is defined by 
\[
\alpha(\Omega) = \inf\left\{ \frac{\vol(\Omega \operatorname{\Delta} B)}{\vol(\Omega)}: \text{$B$ is a ball with the same volume as $\Omega$}\right\}. 
\]
Thus $\alpha(\Omega)$ measures the scale invariant $L^1$ distance between $\Omega$ and a ball. 
The following theorem shows that the excess in the volumetric Penrose inequality is bounded from below by a constant times the square of the Fraenkel asymmetry. This resembles the conclusion of known stability theorems for the isoperimetric inequality \cite{fusco2008sharp} and the isocapacitary inequality \cite{de2021sharp},\cite{mukoseeva2023sharp}. 

\begin{theorem}
\label{stability}
There are constants $\eta_0 > 0$ and $C > 0$ such that the following holds.   Let $\Omega\subset \R^3$ be a smooth, bounded domain diffeomorphic to a ball.  Consider an asymptotically flat manifold $(M,g)$ where $M = \R^3\setminus \Omega$ and $g = f^4 g_{\text{euc}}$ for a smooth function $f > 0$ with $f \to 1$ at infinity.  Assume that $M$ has non-negative scalar curvature and that $\Sigma = \bd M$ is a minimal 2-sphere in the $g$ metric.  If 
 \[
 \frac{\m}{2\left(\frac{3}{4\pi}\vol(\Omega)\right)^{\frac 1 3}} \le 1+\eta
 \]
 for some $0 < \eta < \eta_0$, then $\alpha(\Omega) \le C\sqrt{\eta}$. 
 \end{theorem}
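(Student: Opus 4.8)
The plan is to reduce Theorem~\ref{stability} to a sharp quantitative form of the isocapacitary inequality for the conformal capacity, with the inequality $\m \ge 2\cc(\Omega)$ of Theorem~\ref{main} entering only qualitatively. Recall that Corollary~\ref{vpi} rests on the sharp isocapacitary inequality $\cc(\Omega) \ge (\frac{3}{4\pi}\vol(\Omega))^{1/3}$, with equality exactly for balls, since $\cc(B) = (\frac{3}{4\pi}\vol(B))^{1/3}$ is the radius of a ball $B$. The first step is to record the quantitative refinement
\begin{equation}
\label{eq:quant-iso}
\cc(\Omega) \ge \left(\frac{3}{4\pi}\vol(\Omega)\right)^{1/3}\bigl(1 + c_0\,\alpha(\Omega)^2\bigr)
\end{equation}
for a dimensional constant $c_0 > 0$, valid for all smooth bounded domains $\Omega$ whose Fraenkel asymmetry lies below some threshold $\alpha_*$. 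This is the conformal analogue of the sharp quantitative isocapacitary inequalities for the Newtonian capacity in \cite{de2021sharp, mukoseeva2023sharp} and of the quantitative isoperimetric inequality in \cite{fusco2008sharp}.

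Granting \eqref{eq:quant-iso}, the deduction is short. Abbreviate $V = (\frac{3}{4\pi}\vol(\Omega))^{1/3}$ and set $a = \m/(2\cc(\Omega))$ and $b = \cc(\Omega)/V$, so that $a \ge 1$ by Theorem~\ref{main}, $b \ge 1$ by the isocapacitary inequality, and the hypothesis reads $ab = \m/(2V) \le 1 + \eta$. Since $a \ge 1$, this forces $b \le 1 + \eta$, i.e.\ $\cc(\Omega) \le (1+\eta)V$. Feeding this into \eqref{eq:quant-iso} gives $1 + c_0\,\alpha(\Omega)^2 \le 1 + \eta$, so that $\alpha(\Omega) \le \sqrt{\eta/c_0}$ and the theorem holds with $C = c_0^{-1/2}$. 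The threshold $\eta_0$ serves only to keep us in the regime where \eqref{eq:quant-iso} applies: by the strict isocapacitary inequality together with a compactness argument, domains with $\alpha(\Omega) \ge \alpha_*$ satisfy $b \ge 1 + \delta_*$ for a fixed $\delta_* > 0$, so taking $\eta_0 < \delta_*$ excludes the large-asymmetry case and forces $\alpha(\Omega) < \alpha_*$.

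Note that all of the shape information is drawn from the Euclidean capacity--volume excess $b - 1$; the mass--capacity excess $a - 1$ supplied by Theorem~\ref{main} is discarded after being used only to guarantee $a \ge 1$. Hence the crux is \eqref{eq:quant-iso} at the conformally invariant exponent $p = n = 3$, and this is where I expect the main difficulty to lie. Because the conformal capacity is governed by the degenerate, nonlinear $3$-Laplacian rather than the ordinary Laplacian, the second-variation and regularity arguments developed for the Newtonian capacity in \cite{de2021sharp, mukoseeva2023sharp} do not transfer verbatim. I would pursue one of two routes. The first is to carry out the selection-and-regularity scheme directly for the $3$-harmonic capacitary potential, computing the second variation of $\cc$ at the ball and bounding it below by the $H^{1/2}$ seminorm of the normal boundary perturbation, which in turn controls $\alpha(\Omega)^2$. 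The second is to deduce \eqref{eq:quant-iso} from the quantitative isoperimetric inequality \cite{fusco2008sharp} applied to the superlevel sets of the capacitary potential, propagating asymmetry through the coarea formula; this route is appealing but delicate, since one must transfer smallness of the asymmetry of the level sets back to $\Omega$ itself.
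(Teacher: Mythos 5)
Your reduction is logically sound as far as it goes: granting the quantitative conformal isocapacitary inequality
\[
\cc(\Omega) \;\ge\; \left(\tfrac{3}{4\pi}\vol(\Omega)\right)^{1/3}\bigl(1+c_0\,\alpha(\Omega)^2\bigr),
\]
the hypothesis $\m \le 2(1+\eta)\bigl(\tfrac{3}{4\pi}\vol(\Omega)\bigr)^{1/3}$ together with $\m \ge 2\cc(\Omega)$ does force $\alpha(\Omega)\le \sqrt{\eta/c_0}$. But this inequality is precisely where all the content of the theorem now lives, and it is not available: the results you cite cover the Newtonian case $p=2$ \cite{de2021sharp} and the range $1<p<n$ \cite{mukoseeva2023sharp}, and the paper itself points out that at $p=n=3$ the variational energy degenerates ($\operatorname{cap}_3(\Sigma)=0$), so the conformal capacity is defined only through the constant in the logarithmic expansion of the $3$-harmonic potential. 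There is no energy infimum to run the selection-and-regularity scheme on, the $3$-Laplacian is degenerate elliptic, and neither of your two proposed routes (second variation at the ball, or propagating \cite{fusco2008sharp} through level sets) is carried out; you acknowledge as much. In addition, your large-asymmetry regime rests on an unproven compactness claim: the class of smooth volume-normalized domains with $\alpha(\Omega)\ge\alpha_*$ is not compact, so the assertion that their isocapacitary deficit is uniformly bounded below is itself a qualitative stability statement for the conformal capacity that would need proof. So the proposal is a correct conditional reduction, not a proof.

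It is also worth noting that the paper deliberately avoids this route — it states that its method is completely different from \cite{de2021sharp,fusco2008sharp,mukoseeva2023sharp}. The actual proof never needs any quantitative isocapacitary inequality: it keeps the traceless second fundamental form term $\tfrac12\int(e^\tau+1)^2e^{-\tau}\int_{\Sigma_\tau^\eps}\vert\mathring A\vert^2$ in the monotonicity formula for $Q$, uses the mass hypothesis to bound $Q_\eps(T)-Q_\eps(0)\le C\eta$, extracts a level $s\in[0,\sqrt\eta]$ with $\int_{\Sigma_s^\eps}\vert\mathring A\vert^2\le C\sqrt\eta$, invokes De Lellis--M\"uller \cite{de2005optimal} to sandwich $\Omega_s^\eps$ between balls of radii $(1\pm C\sqrt\eta)r$, pins down $r\approx\m/2$ by comparing $\cc(\Omega)\le\cc(\Omega_s^\eps)\le e^{2\sqrt\eta}\cc(\Omega)$, and finally transfers the $L^1$ closeness from $\Omega_s^\eps$ to $\Omega$ via a volume comparison. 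If you wanted to salvage your strategy, observe that your reduction only uses the capacity--volume deficit and discards the mass--capacity deficit, so it would prove a strictly stronger, purely Euclidean statement (asymmetry controlled by the conformal isocapacitary deficit alone) — which is exactly why it cannot be expected to follow without substantial new PDE work, whereas the paper's geometric argument exploits the manifold structure that the hypothesis provides.
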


\subsection{Background and Discussion} Let $(M^3,g)$ be an asymptotically flat manifold. Assume that $M$ has non-negative scalar curvature and that $\Sigma = \bd M$ is an outermost minimal surface. The positive mass theorem asserts that $\m\ge 0$, with equality if and only if $M$ is Euclidean $\R^3$. The positive mass theorem was first proven by Schoen and Yau \cite{schoen1979proof} in 1979 using minimal surface techniques. Later, Witten \cite{witten1981new} gave another proof using spinor methods. 
As a test of the cosmic censorship hypothesis, Penrose \cite{penrose1973naked} proposed an inequality which, in the Riemannian setting, asserts that the ADM mass should be bounded below in terms of the area of the boundary via 
\[
\m \ge \sqrt{\frac{\area(\Sigma)}{16\pi}}. 
\]
This {\it Riemannian Penrose inequality} was confirmed by Huisken and Ilmanen \cite{huisken2001inverse} in the case where $\Sigma$ consists of a single 2-sphere. Their proof is based on weak inverse mean curvature flow.  Bray \cite{bray2001proof} gave an independent proof that does not require the boundary of $M$ to be connected.

In the 1980s, it was already known to the Polish school of physicists that $p$-harmonic functions could be a useful tool for proving the positive mass theorem; see the works of Kijowski \cite{kijowski1984unconstrained} and Chru{\'s}ciel \cite{chrusciel1986kijowski}. Recently, there has been considerable interest in obtaining rigorous lower bounds for the ADM mass using $p$-harmonic functions.  These lower bounds are formulated in terms of the $p$-capacity of $\Sigma$. By definition, for $1 < p < 3$, the $p$-capacity of $\Sigma$ is 
\[
\operatorname{cap}_p(\Sigma) = \inf\left\{\int_M \vert \grad w\vert^p\, dv: w\in C^\infty_c(M) \text{ and } w\ge 1 \text{ on } \bd M\right\}.
\]
It is known that the infimum is achieved by a function $u\in W^{1,p}(M)$ which solves the PDE 
\[
\begin{cases}
\lap_p u = 0, &\text{in M}\\
u = 1, &\text{on } \Sigma\\
u\to 0, &\text{at infinity}. 
\end{cases}
\]
The function $u$ has $C^{1,\alpha}$ regularity and is smooth away from its critical points.  The $p$-capacity can also be computed in terms of $u$ via 
\[
\operatorname{cap}_p(\Sigma) = \int_M \vert \grad u\vert^p\, dv = \int_{\{u=t\}} \vert \grad u\vert^{p-1}\, da,
\]
where $\{u=t\}$ is any regular level set of $u$. We will call $u$ the $p$-capacitary potential associated to $\Sigma$.  

In the harmonic case $p=2$, Bray \cite{bray2001proof} proved a mass-capacity inequality 
\begin{equation}
\label{2-cap}
\m \ge \frac{\operatorname{cap}_2(\Sigma)}{4\pi}
\end{equation}
by using the positive mass theorem and a reflection argument. Equality holds if and only if $M$ is Schwarzschild.  
In \cite{bray2008capacity}, Bray and Miao gave another proof based on weak inverse mean curvature flow. Later, Miao \cite{miao2023mass} and Oronzio \cite{oronzio2022adm} gave independent proofs of (\ref{2-cap}) that rely only facts about harmonic functions. 

For $1 < p < 3$, Xiao \cite{xiao2016p} adapted the method of Bray and Miao \cite{bray2008capacity} to prove the mass to $p$-capacity inequality 
\begin{equation}
\label{p-cap}
\m \ge 2\left(\frac{\operatorname{cap}_p(\Sigma)}{K(p)}\right)^{\frac{1}{3-p}}, 
\end{equation}
where $K(p)$ is an explicitly computable constant.\footnote{In fact, $K(p)$ is the $p$-capacity of the horizon in mass 2 Schwarzschild.} Equality holds if and only if $M$ is Schwarzschild.  This recovers the Penrose inequality in the limit $p\to 1$.  Agostiniani, Mantegazza, Mazzieri, and Oronzio \cite{agostiniani2022riemannian} proved a version of (\ref{p-cap}) with non-sharp constant using only facts about $p$-harmonic functions and non-linear potential theory. Their weaker inequality is still enough to recover the Penrose inequality in the $p\to 1$ limit. Later, the authors \cite{mazurowski2023monotone} gave a proof of the sharp inequality (\ref{p-cap}) using only facts about $p$-harmonic functions; also see \cite{xia2024new} for an independent proof along the same lines. For more applications of $p$-harmonic functions ($1<p<3$) in the study of scalar curvature, see the references \cite{chan2024monotonicity} and \cite{hirsch2024monotone}. 

The goal of this paper is to study the mass capacity inequality in the conformal case $p = 3$. We will restrict our attention to conformally flat, asymptotically flat manifolds $(M,g)$ where $M = \R^3 \setminus \Omega$ and $g = f^4 g_{\text{euc}}$ for some smooth $f > 0$ satisfying  $f\to 1$ at infinity.   We will always assume that $M$ has non-negative scalar curvature and that $\Sigma = \bd M$ is minimal.  It is straightforward to check that 
\[
\operatorname{cap}_3(\Sigma) = \inf\left\{\int_{\R^3 \setminus \Omega} \vert \grad w\vert^3 \, dv: w\in C^\infty_c(\R^3\setminus \Omega) \text{ and } w\ge 1 \text{ on } \Sigma\right\} = 0.  
\]
Hence this does not provide a useful notion of capacity when $p = 3$. 

However, a natural conformal capacity can be defined as follows. Fix $1 < p < 3$. Let $u$ be the $p$-capacitary potential associated to $\Sigma$. Then it is known that $u$ has an asymptotic expansion 
\[
u = a{\vert x\vert^{\frac {p-3}{p-1}}} + o(\vert x\vert^{\frac {p-3}{p-1}}).
\]
In particular, $u$ decays like the fundamental solution of the $p$-Laplacian near infinity, and the $p$-capacity can be computed in terms of the constant $a$ in this expansion via 
\[
\operatorname{cap}_p(\Omega) =  \lim_{t\to \infty} \int_{\{u=t\}} \vert \grad u\vert^{p-1}\, da =  4\pi \left(\frac{3-p}{p-1}\right)^{p-1}  a^{p-1}.
\] 
In the conformal case $p=3$, the fundamental solution of the 3-Laplacian no longer decays near infinity, but rather grows like $\log \vert x\vert$. The conformal capacity can then be defined in terms of the asymptotic expansion of the 3-harmonic function $u$ which is 0 on $\Sigma$ and grows like $\log \vert x\vert$ at infinity. More precisely, it is known that there is a unique weak solution $u$ to the PDE 
\[
\begin{cases}
\lap_3 u = 0, &\text{in } M\\
u = 0, &\text{on } \Sigma \\
\frac{u(x)}{\log \vert x\vert} \to 1, &\text{as } \vert x\vert \to \infty. 
\end{cases}
\]
Moreover, $u$ has an expansion $u(x) = \log \vert x\vert + a + o(1)$ near infinity. The conformal capacity of $\Omega$ is defined to be 
\[
\cc(\Omega) = e^{-a}.
\] 
A similar definition of conformal capacity (typically called {\it logarithmic capacity}) is well-known for subsets of $\R^2$, where it turns out to be equivalent to other conformal invariants like the transfinite diameter. To the authors' knowledge, this definition of conformal capacity for subsets of $\R^n$ with $n\ge 3$ first appeared in \cite{colesanti2005brunn}. Further properties of the conformal capacity in dimension $n\ge 3$ have been studied by Xiao \cite{xiao2013towards},\cite{xiao2018exploiting},\cite{xiao2020geometrical}. 

Our main result Theorem \ref{main} gives a lower bound for the mass of a conformally flat, asymptotically flat manifold in terms of the conformal capacity. The proof is based only on the properties of 3-harmonic functions. This inequality can be thought of as the natural mass to $p$-capacity inequality in the limiting case $p = 3$.  It seems an interesting problem to define the conformal capacity for general asymptotically flat manifolds which are not conformally flat. We give some further discussion of this problem in Appendix \ref{general}.

It is an easy corollary of the mass to conformal capacity inequality that the mass of a conformally flat, asymptotically flat manifold is bounded below in terms of the Euclidean volume of $\Omega$.  Regarding this, 
we note that Schwartz \cite{schwartz2011volumetric} has previously obtained the following volumetric Penrose inequality with sub-optimal constant. 

\begin{theorem}[Schwartz \cite{schwartz2011volumetric}]
Assume that $M = \R^n\setminus \Omega$. Suppose that $g = f^{\frac{4}{n-2}} g_{\operatorname{euc}}$ is an asymptotically flat metric on $M$ with non-negative scalar curvature, where $f$ is normalized so that $f\to 1$ at infinity. Let $\Sigma = \bd \Omega$ and assume that $\Sigma$ is Euclidean mean convex and $g$-minimal. Then 
\[
\m > \left(\frac{\vol(\Omega)}{\omega_n}\right)^{\frac{n-2}{n}},
\]
where $\omega_n$ is the volume of the unit ball in $\R^n$. 
\end{theorem}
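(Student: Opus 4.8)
The plan is to translate each hypothesis into a statement about the conformal factor $f$ on the Euclidean domain $M = \R^n\setminus\Omega$, and then run a single flux computation. First I would record the two conformal transformation laws. Writing $g = f^{4/(n-2)}g_{\operatorname{euc}}$, the change of scalar curvature gives $R_g = -\tfrac{4(n-1)}{n-2}f^{-\frac{n+2}{n-2}}\Delta f$, so the hypothesis $R_g\ge 0$ is exactly the statement that $f$ is superharmonic on $M$. Letting $\nu$ be the Euclidean unit normal of $\Sigma$ pointing into $M$ and $H$ the Euclidean mean curvature with respect to $\nu$, the change of mean curvature reads $H_g = f^{-2/(n-2)}\bigl(H + \tfrac{2(n-1)}{n-2}f^{-1}\bd_\nu f\bigr)$. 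Hence $g$-minimality ($H_g=0$) is equivalent to $\bd_\nu f = -\tfrac{n-2}{2(n-1)}fH$, and the Euclidean mean-convexity hypothesis $H\ge 0$ then forces $\bd_\nu f\le 0$ on $\Sigma$.

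Second, I would extract two consequences. Since $f$ is superharmonic, nonconstant (a flat metric carries no compact minimal hypersurface, so $\Sigma$ minimal forces $f$ nonconstant), and tends to $1$ at infinity, the Hopf lemma rules out a minimum value below $1$: an interior minimum is excluded by the strong minimum principle, while a boundary minimum would force $\bd_\nu f>0$, contradicting $\bd_\nu f\le 0$. Hence $f>1$ on $M$. Using the asymptotic expansion $f = 1 + A|x|^{2-n}+o(|x|^{2-n})$, with $\m$ a positive dimensional multiple of $A$, I would apply the divergence theorem to $f$ on $M\cap\{|x|<R\}$ and let $R\to\infty$ to obtain the flux identity $(n-2)\,|S^{n-1}|\,A = \int_M(-\Delta f)\,dx - \int_\Sigma \bd_\nu f\,dA$. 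Both terms on the right are non-negative, so discarding the bulk term and inserting the minimality relation together with $f>1$ yields
\[
(n-2)\,|S^{n-1}|\,A \ \ge\ \int_\Sigma |\bd_\nu f|\,dA = \frac{n-2}{2(n-1)}\int_\Sigma fH\,dA \ >\ \frac{n-2}{2(n-1)}\int_\Sigma H\,dA,
\]
where the final strict inequality uses $f>1$ and $\int_\Sigma H>0$ (again because $\Sigma$ is compact and cannot be Euclidean-minimal).

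Finally, I would convert the total mean curvature into Euclidean volume. A Minkowski-type inequality bounds $\int_\Sigma H\,dA$ below by a dimensional constant times $|\Sigma|^{(n-2)/(n-1)}$, and the Euclidean isoperimetric inequality bounds $|\Sigma|$ below by a constant times $\vol(\Omega)^{(n-1)/n}$; chaining these gives $\int_\Sigma H\,dA \ge c_n\,\vol(\Omega)^{(n-2)/n}$, and tracking constants produces $\m > \bigl(\vol(\Omega)/\omega_n\bigr)^{(n-2)/n}$. I expect the main obstacle to be this last geometric step: the Minkowski inequality is classical for convex $\Sigma$, but under only the mean-convexity hypothesis one must invoke its extensions (for star-shaped or outward-minimizing hypersurfaces), and one must verify these apply in the present generality. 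It is precisely the slack in the estimate $fH\ge H$ — recall $f\equiv 2$ on the horizon in Schwarzschild, so a factor of $2$ is lost — that accounts for the non-sharp constant here, in contrast to the sharp Corollary \ref{vpi}.
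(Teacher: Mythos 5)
The paper does not reprove this theorem: it is quoted from Schwartz, and the only proof content in the paper is the one-sentence description of Schwartz's strategy, namely to bound $\m$ from below by the $2$-capacity of $\Sigma$ in the $g$-metric (Bray's mass--capacity inequality), then to compare the $g$-capacity with the Euclidean $2$-capacity --- the paper notes that this middle step is where the mean-convexity and boundary hypotheses bite --- and finally to invoke the isocapacitary inequality to bound the Euclidean capacity below by volume. Your argument is a genuinely different route that never mentions capacity: you translate the hypotheses into statements about $f$ (the transformation laws you quote are correct, and indeed the paper records that $R_g\ge 0$ is equivalent to $\lap_{\text{euc}}f\le 0$), use the Hopf lemma to get $f>1$ on all of $\overline M$ (your boundary case is right: if $f=1$ at a point of $\Sigma$ it is a global minimum, so Hopf forces $\bd_\nu f>0$ there, contradicting $\bd_\nu f\le 0$, so the strictness you need on $\Sigma$ does hold), and then run the flux identity $(n-2)\vert S^{n-1}\vert A=\int_M(-\Delta f)\,dx-\int_\Sigma \bd_\nu f\,dA$ with $\m=2A$. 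The numerics check out: discarding the bulk term and using $f>1$, $\int_\Sigma H\,dA>0$ gives $\m>\frac{1}{(n-1)\vert S^{n-1}\vert}\int_\Sigma H\,dA$, and the sharp Minkowski--isoperimetric chain turns the right-hand side into exactly $\left(\vol(\Omega)/\omega_n\right)^{(n-2)/n}$, with equality for round spheres; your diagnosis of the lost factor of $2$ via $f\equiv 2$ on the Schwarzschild horizon (where the flux identity and minimality relation are saturated) is exactly right. What your route buys is a short, self-contained PDE argument; what the capacity route buys is that the isocapacitary (P\'olya--Szeg\H{o} type) inequality holds for arbitrary bounded domains with no curvature-integral input, concentrating all the geometric hypotheses in the capacity-comparison step.

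The one step that is a genuine hole as written is the Minkowski inequality $\int_\Sigma H\,dA\ge (n-1)\vert S^{n-1}\vert^{1/(n-1)}\vert\Sigma\vert^{(n-2)/(n-1)}$: you correctly flag that the classical statement needs convexity and that the flow-based extensions need star-shapedness (Guan--Li) or outward-minimizing boundaries (Huisken), but a merely mean-convex $\Sigma$ need be neither, so as stated the chain does not close. Two repairs are available. First, the extended Minkowski inequality of Agostiniani, Fogagnolo, and Mazzieri, proved via nonlinear potential theory, holds with $\vert H\vert$ for every bounded domain with smooth boundary in $\R^n$, and here $H\ge 0$. Second, more elementarily, pass to the strictly outward-minimizing hull $\Omega^*\supset\Omega$: the new portions of $\bd\Omega^*$ are minimal and the remaining portions lie in $\Sigma$ where $H\ge 0$, so $\int_{\bd\Omega^*}H\,dA\le\int_\Sigma H\,dA$, while $\vol(\Omega^*)\ge\vol(\Omega)$, and Huisken's inequality applies to $\Omega^*$; chaining with the isoperimetric inequality then yields exactly the bound you need. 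With either repair your proof is complete and correct.
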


Note that equality never holds in this theorem. In fact, the right hand side is a factor of 2 away from being sharp. Later, Freire and Schwartz \cite{freire2014mass}  obtained an optimal volumetric Penrose inequality when $\Sigma$ is Euclidean mean convex and the conformal factor $f$ satisfies certain additional assumptions.

\begin{theorem}[Freire and Schwartz \cite{freire2014mass}]
Assume that $M = \R^n\setminus \Omega$. Suppose that $g = f^{\frac{4}{n-2}} g_{\operatorname{euc}}$ is an asymptotically flat metric on $M$ with non-negative scalar curvature, where $f$ is normalized so that $f\to 1$ at infinity.  Let $\Sigma = \bd \Omega$ and define $\alpha = \min_\Sigma f$ and $\omega = \max_\Sigma f - \min_\Sigma f$. Assume that 
\begin{itemize}
\item[(i)] $\Sigma$ is Euclidean mean convex and $g$-minimal,
\item[(ii)] either $\alpha \ge 2$, or $\alpha \ge 1 + \frac{\sqrt 2}{2}$ and $\omega < 1 - \frac{\alpha}{2}$, or $\alpha < 1 + \frac{\sqrt 2}{2}$ and $\omega < 2-\alpha - \frac{1}{2\alpha}$. 
\end{itemize}
Then one has 
\[
\m \ge 2\left(\frac{\vol(\Omega)}{\omega_n}\right)^{\frac{n-2}{n}},
\]
where $\omega_n$ is the volume of the unit ball in $\R^n$. If equality holds, then $M$ is Schwarzschild. 
\end{theorem}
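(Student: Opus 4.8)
The plan is to translate every hypothesis into a statement about the Euclidean conformal factor $f$ and then run a single divergence-theorem identity. First I would record the conformal dictionary. Since $g = f^{4/(n-2)} g_{\text{euc}}$ and $g_{\text{euc}}$ is scalar flat, the conformal scalar curvature formula gives $R_g = -\tfrac{4(n-1)}{n-2} f^{-\frac{n+2}{n-2}} \lap f$, so $R_g \ge 0$ is equivalent to $f$ being superharmonic, i.e.\ $\mu := -\lap f \ge 0$ on $M$. The conformal transformation of mean curvature turns the $g$-minimality of $\Sigma = \bd\Omega$ into the Robin condition $\bd_\nu f = -\tfrac{n-2}{2(n-1)} H f$ on $\Sigma$, where $\nu$ is the outward Euclidean unit normal and $H$ is the Euclidean mean curvature; hypothesis (i) then gives $H \ge 0$. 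Finally, superharmonicity together with $f \to 1$ forces an expansion $f = 1 + A|x|^{2-n} + o(|x|^{2-n})$ with $A \ge 0$, and a direct evaluation of the ADM integral for a conformally flat metric yields $\m = 2A$ (for the Schwarzschild factor $f = 1 + \rho|x|^{2-n}$ this recovers $\m = 2\rho$).

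The second step is the mass identity. Integrating $\lap f$ over $M \cap \{|x| < r\}$, sending $r \to \infty$, and inserting the Robin condition gives, after dividing by $(n-2)$,
\[
\m = \frac{1}{(n-1)\,\omega_{n-1}} \int_\Sigma H f \, d\sigma + \frac{2}{(n-2)\,\omega_{n-1}} \int_M \mu,
\]
in which \emph{both} terms are nonnegative. The Euclidean input is a sharp geometric inequality for mean-convex domains: combining the Minkowski inequality (valid for mean-convex, outward-minimizing hypersurfaces) with the isoperimetric inequality yields
\[
\frac{1}{(n-1)\,\omega_{n-1}} \int_\Sigma H \, d\sigma \ge \left(\frac{\vol(\Omega)}{\omega_n}\right)^{\frac{n-2}{n}},
\]
with equality precisely for round balls; one checks directly that spheres saturate both inputs, so the constant is sharp.

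The case $\alpha \ge 2$ then closes immediately. Since $f \ge \alpha$ on $\Sigma$ and $H \ge 0$, the first term of the mass identity is at least $\alpha\big(\vol(\Omega)/\omega_n\big)^{(n-2)/n}$; discarding the nonnegative bulk term gives $\m \ge \alpha\big(\vol(\Omega)/\omega_n\big)^{(n-2)/n} \ge 2\big(\vol(\Omega)/\omega_n\big)^{(n-2)/n}$. For rigidity, equality forces $\mu \equiv 0$ (so $f$ is harmonic), $f \equiv \alpha = 2$ on $\Sigma$, and equality in the geometric inequality, hence $\Omega$ is a round ball; the harmonic function equal to $2$ on a sphere and tending to $1$ at infinity is radial and agrees with the Schwarzschild factor, so $M$ is Schwarzschild. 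Note that $\alpha = 2$ is exactly the boundary value of the Schwarzschild factor on its horizon, which is why this case already contains the sharp case.

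The main obstacle is the range $\alpha < 2$, where the crude bound $f \ge \alpha$ on $\Sigma$ loses the required factor and the deficit $2 - \alpha$ must be recovered from the superharmonicity. The plan here is to apply the same divergence identity to a nonlinear weight $\Phi(f)$ in place of $f$: a concave increasing $\Phi$ keeps the bulk term favorably signed but caps the effective boundary weight $\Phi'(f)f/\Phi'(1)$, whereas the subharmonic choice $\Phi(f) = f^{-1}$ reverses the roles; optimizing over such weights, and where necessary retaining rather than discarding $\int_M \mu$ and bounding it below using the oscillation of $f$ on $\Sigma$, is what should produce the explicit thresholds in (ii). Concretely, the constraints $\alpha \ge 1 + \tfrac{\sqrt 2}{2}$, $\omega < 1 - \tfrac{\alpha}{2}$ and $\alpha < 1 + \tfrac{\sqrt 2}{2}$, $\omega < 2 - \alpha - \tfrac{1}{2\alpha}$ should be exactly the regimes in which an admissible weight drives the effective boundary coefficient up to $2$ while the bulk contribution stays controllably signed. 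I expect the genuine difficulty to lie in this weight optimization, in the interaction between the weight $f$ and the Euclidean geometry of $\Sigma$, and in tracking rigidity through the nonlinear weights; the scalar-curvature-to-superharmonicity reduction and the $\alpha \ge 2$ argument are, by contrast, routine.
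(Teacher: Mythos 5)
First, for context: the paper does not prove this theorem at all --- it is quoted from Freire--Schwartz \cite{freire2014mass} as background, and the paper records their strategy in one sentence: bound $\m$ from below by the $2$-capacity of $\Sigma$ in the $g$-metric (Bray's mass-capacity inequality), compare that capacity with the Euclidean $2$-capacity --- this middle step is where hypothesis (ii) enters --- and conclude with the isocapacitary inequality. Your route is genuinely different: the flux identity
\[
\m \;=\; \frac{1}{(n-1)\sigma_{n-1}}\int_\Sigma Hf\,d\sigma \;+\; \frac{2}{(n-2)\sigma_{n-1}}\int_M (-\lap f)\,dV, \qquad \sigma_{n-1}=\vert S^{n-1}\vert,
\]
obtained from the divergence theorem together with the Robin condition $\bd_\nu f = -\tfrac{n-2}{2(n-1)}Hf$ (equivalent to $g$-minimality), followed by the Minkowski and isoperimetric inequalities. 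Your conformal dictionary is correct ($R_g\ge 0 \iff \lap f \le 0$, $\m = 2A$), the identity is correct, and the resulting proof of the case $\alpha \ge 2$, including its rigidity discussion, is essentially sound.

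However, there is a genuine gap: you prove the theorem only under the \emph{first} alternative of hypothesis (ii). The second and third alternatives ($\alpha \ge 1+\tfrac{\sqrt 2}{2}$ with $\omega < 1-\tfrac{\alpha}{2}$, and $\alpha < 1+\tfrac{\sqrt 2}{2}$ with $\omega < 2-\alpha-\tfrac{1}{2\alpha}$) are the entire content of the Freire--Schwartz improvement over Schwartz's earlier result, since they permit boundary values of $f$ below the Schwarzschild horizon value $2$; for these you offer only a program --- ``optimizing over concave weights $\Phi(f)$ \dots should produce the explicit thresholds'' --- with no candidate weight exhibited, no computation showing the bulk term retains a favorable sign under the substitution $f \mapsto \Phi(f)$, and no verification that any such optimization yields precisely the stated thresholds rather than different ones. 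As written, this part is a conjecture about how a proof might go, not a proof, and the rigidity claim is likewise untouched outside $\alpha\ge 2$. A secondary gap: your geometric input is the sharp Minkowski inequality, which (as you yourself note) is known for mean-convex \emph{outward-minimizing} hypersurfaces, whereas hypothesis (i) grants only Euclidean mean convexity; you would need either to pass to the strictly outward-minimizing hull (and then track both the enclosed volume and the weight $f$, which is only bounded below by $\alpha$ on $\Sigma$ itself, through that replacement) or to invoke a Minkowski-type inequality valid for arbitrary mean-convex boundaries, and neither step is carried out.
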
 

By contrast, our Corollary \ref{vpi} obtains the sharp constant with no mean convexity assumption on $\Sigma$. It also dispenses with the extra assumption (ii) on the boundary behavior of the conformal factor $f$.  However, it is restricted to dimension 3 and requires that $\Sigma$ is a 2-sphere.  

The basic strategy in \cite{schwartz2011volumetric} and \cite{freire2014mass} is to first bound the mass from below by the 2-capacity in the $g$ metric, then to relate the 2-capacity in the $g$ metric with the $2$-capacity in the Euclidean metric, and finally to use the isocapacitary inequality to bound the Euclidean $2$-capacity from below by the volume. It is the middle step comparing the $2$-capacity in the $g$ metric to the $2$-capacity in the Euclidean metric which requires extra assumptions. Using the conformal capacity obviates the need for this extra middle step, since the conformal capacity is the same in the $g$-metric and the Euclidean metric. 

\begin{rem}
We want to emphasize that the volumetric Penrose inequality is not a straightforward consequence of the usual Penrose inequality. Indeed, applying the usual Penrose inequality and the isoperimetric inequality one can deduce that  
\[
\m \ge \left(\frac{\area_g(\Sigma)}{16\pi}\right)^{\frac 1 2} \ge \frac{(\min_{\Sigma} f)^2}{2} \left(\frac{\area_{\text{euc}}(\Sigma)}{4\pi}\right)^{\frac 1 2} \ge \frac{(\min_\Sigma f)^2}{2} \left(\frac{3}{4\pi}\vol(\Omega)\right)^{\frac{1}{3}}, 
\]
which only yields the volumetric Penrose inequality if $\min_\Sigma f \ge 2$. 
\end{rem}

Finally, we also prove a stability type theorem for the volumetric Penrose inequality. Theorem 3 shows that if $(M,g)$ is close to achieving equality in the volumetric Penrose inequality, then $\Omega$ must be close to a ball in a scale-invariant $L^1$ sense.
More precisely, we show that the excess in the volumetric Penrose inequality is bounded below by the square of the Fraenkel asymmetry. Similar stability theorems are known for the isoperimetric inequality \cite{fusco2008sharp} and the isocapacitary inequality for $p$ strictly between 1 and the dimension \cite{de2021sharp},\cite{mukoseeva2023sharp}. In all cases, the relevant excess is bounded below by the square of the Fraenkel asymmetry. We note that our method of proof is completely different from the methods in \cite{de2021sharp},\cite{fusco2008sharp},\cite{mukoseeva2023sharp}. 

\subsection{Structure of Paper and Outline of Proof}

The proof of Theorem \ref{main} is based on a monotonicity formula holding along the level sets of a 3-harmonic function. 
Let $\Omega\subset \R^3$ be a smooth,  bounded domain diffeomorphic to a ball so that $\Sigma = \bd \Omega$ is a 2-sphere. Let $(M,g)$ be a conformally flat, asymptotically flat manifold with $M = \R^3\setminus \Omega$ and $g = f^4 g_{\text{euc}}$ with $f\to 1$ at infinity. Assume that $M$ has non-negative scalar curvature and that $\Sigma = \bd M$ is minimal. Let $u$ be the conformal capacity function associated to $\Omega$. 

Assume for simplicity that $u$ is smooth with no critical points. In this case, each level set $\Sigma_t = \{u=t\}$ is a smooth 2-sphere. 
In Section \ref{smooth}, we derive a first order differential inequality satisfied by the function 
\[
U(t) = \int_{\Sigma_t} H_g\vert \grad^g u\vert\, da_g.
\]
More precisely, we show that 
\[
U'(t) \le 4\pi \left(1- \frac{1}{16\pi}\int_{\Sigma_t} H_g^2\, da_g\right) \le 4\pi - \frac{U(t)^2}{16\pi}. 
\]
This inequality implies that 
\begin{equation}
\label{eq-U}
U(t) \le U_s(t)
\end{equation}
where 
$
U_s(t)
$
is the $U$ function computed in the model case of Schwarzschild. In Section \ref{critical}, we rigorously justify the monotonicity even in the presence of critical points using an argument from \cite{agostiniani2022riemannian}. 

Next we estimate the asymptotics of the function $U$ as $t\to \infty$. Heuristic calculations similar to those in \cite{miao2023mass} suggest that 
\begin{equation}
\label{asy1}
\limsup_{t\to \infty} e^t(8\pi - U(t)) \le 8\pi \frac{\m}{\cc(\Omega)}.
\end{equation}
Indeed, since $U(t) \to 8\pi$ as $t\to \infty$, we can integrate the differential inequality for $U$ to get 
\begin{equation}
\label{ineq1}
8\pi - U(t) \le  \int_t^\infty \frac{(16\pi)^{3/2} m_H(\Sigma_t)}{4\area_g(\Sigma_t)^{1/2}} \, dt,
\end{equation}
where $m_H(\Sigma_t)$ denotes the Hawking mass of $\Sigma_t$ in the $g$ metric. 
As $t\to \infty$, we expect that $m_H(\Sigma_t) \to \m$ and that $\area_g(\Sigma_t)$ is very nearly $4\pi \cc(\Omega)^2 e^{2t}$. Substituting these relations into (\ref{ineq1}) yields inequality (\ref{asy1}). In Section \ref{asymptotic}, we will rigorously justify inequality (\ref{asy1}).

In Section \ref{results}, we prove the main results. 
Theorem \ref{main} follows easily by combining (\ref{eq-U}) and (\ref{asy1}). 
The volumetric Penrose inequality is an immediate corollary since spherical symmetrization decreases the conformal capacity.  
It then remains to prove Theorem \ref{stability} on the stability of the volumetric Penrose inequality.  For simplicity, we describe the proof assuming that $u$ has no critical points.  In this case, we show that the excess in the volumetric Penrose inequality is bounded below by a quantity of the form 
\[
\int_0^\infty w(\tau) \left[\int_{\Sigma_\tau} \vert \mathring A\vert^2\, da\right]\, d\tau,
\]
where $w$ is a smooth function with $w(0) > 0$. 
Hence, if equality almost holds in the volumetric Penrose inequality, then there is a small $s > 0$ for which
\[
\int_{\Sigma_s} \vert \mathring A\vert^2 \, da
\]
is nearly zero. By the work of De Lellis and M\"uller \cite{de2005optimal}, this implies that $\Sigma_s$ is nearly round. Therefore the region $\Omega_s$ enclosed by $\Sigma_s$ has small Fraenkel asymmetry. Finally, the conclusion follows by showing that $\vol(\Omega \operatorname{\Delta} \Omega_s)$ is small, so that $\Omega$ also has small Fraenkel asymmetry.

\subsection{Acknowledgements} The authors would like to thank Xin Zhou for his continual support and guidance. L.M. is supported by an AMS Simons Travel Grant. X.Y. is supported by Hutchinson Fellowship.

\section{Preliminaries} 
In this section, we define the conformal capacity of a bounded set $\Omega \subset \R^3$.  We then define the class of conformally flat, asymptotically flat manifolds relevant in our main theorems. Next, we introduce the function $U$ and study some of its basic properties. Finally, we study the model case where $M$ is Schwarzschild.

\subsection{Conformal Capacity} 
We follow \cite{colesanti2005brunn} to define the conformal capacity for subsets of $\R^3$. Let $\Omega$ be a smooth, bounded domain in $\R^3$. Let $M = \R^3\setminus \Omega$. There exists a weak unique solution $u$ to the PDE 
\[
\begin{cases}
\lap_3 u = 0, &\text{in } M\\
u = 0, &\text{on } \Sigma \\
\frac{u(x)}{\log \vert x\vert} \to 1, &\text{as } \vert x\vert \to \infty. 
\end{cases}
\]
For a proof of this fact, see Section 2 and Remark 2.4 in  \cite{colesanti2005brunn}.
We will call $u$ the conformal capacity function associated to $\Omega$. It follows from work of Kichenassamy and V\'eron \cite[Theorem 1.1 and Remarks 1.4-1.5]{kichenassamy1986singular} that the conformal capacity function has an asymptotic expansion  
\begin{equation}
\label{expansion}
\begin{cases}
u(x) = \log \vert x\vert + a + o(1),\\
\grad u(x) = \grad \log \vert x\vert + o(\vert x\vert^{-1})
\end{cases}
\end{equation}
for some constant $a\in \R$. Here all quantities are computed with respect to the Euclidean metric. 

\begin{defn}
\label{conformal-capacity}
The conformal capacity of $\Omega$ is $\cc(\Omega) = e^{-a}$. 
\end{defn}

\begin{ex}
Let $\Omega = B_R$ be a ball of radius $R$ centered at the origin. It is easy to check that 
\[
u = \log\left(\frac{r}{R}\right) = \log r - \log R
\]
is the conformal capacity function associated to $B_R$. Therefore we have $\cc(B_R) = e^{-(-\log R)} = R$. 
\end{ex}

Next we compute some quantities associated to $u$ in the asymptotic regime. All calculations are with respect to the Euclidean metric.  

\begin{defn}
For any regular value $t$ of $u$, let $\Sigma_t = \{u=t\}$. 
\end{defn}

The asymptotic expansion for $u$ ensures that every sufficiently large $t$ is a regular value of $u$. Moreover, $\Sigma_t$ is a smooth $2$-sphere when $t$ is sufficiently large. 

\begin{prop}
\label{mc}
The Euclidean mean curvature of the level sets $\Sigma_t$ satisfies 
\[
H = \frac{2}{\cc(\Omega) e^t}(1 + o(1)).
\]
\end{prop}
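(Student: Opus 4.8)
The plan is to analyze $u$ near infinity by a blow-down rescaling, which reduces the computation of $H$ to the mean curvature of a round sphere. For $\lambda > 0$ set $v_\lambda(y) = u(\lambda y) - \log\lambda$, defined on $\R^3 \setminus (\lambda^{-1}\Omega)$. Because the $3$-Laplacian is invariant under the dilation $x \mapsto \lambda x$ (one checks directly that $\lap_3 v_\lambda = \lambda^3 (\lap_3 u)(\lambda\,\cdot\,) = 0$, and the additive constant $\log\lambda$ is irrelevant), each $v_\lambda$ is again $3$-harmonic. The asymptotic expansion (\ref{expansion}) gives, for fixed $y \neq 0$ and $\lambda \to \infty$,
\[
v_\lambda(y) = \log|\lambda y| + a + o(1) - \log\lambda = \log|y| + a + o(1),
\]
\[
\grad_y v_\lambda(y) = \lambda(\grad u)(\lambda y) = \frac{y}{|y|^2} + o(1),
\]
so that $v_\lambda \to v_\infty := \log|y| + a$ in $C^1_{\mathrm{loc}}(\R^3\setminus\{0\})$. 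The key observation is that, taking $\lambda = e^t$, the level set $\Sigma_t = \{u = t\}$ corresponds under $x = e^t y$ to the zero level set $\{v_{e^t} = 0\}$, since $v_{e^t}(y) = u(e^t y) - t$ vanishes precisely when $u(e^t y) = t$; and the limiting level set $\{v_\infty = 0\}$ is exactly the round sphere $\{|y| = e^{-a} = \cc(\Omega)\}$.

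The main technical step is to upgrade the $C^1_{\mathrm{loc}}$ convergence $v_{e^t}\to v_\infty$ to $C^2_{\mathrm{loc}}$ convergence away from the origin, which is what is needed to pass the mean curvature to the limit. Fix a closed annulus $\mathcal{A} = \{\tfrac12 \cc(\Omega) \le |y| \le 2\cc(\Omega)\}$. On $\mathcal{A}$ one has $|\grad v_\infty| = |y|^{-1} \ge (2\cc(\Omega))^{-1} > 0$, so by the $C^1$ convergence there is a constant $c > 0$ with $|\grad v_{e^t}| \ge c$ on $\mathcal{A}$ for all large $t$. In this region the $3$-Laplace operator is uniformly elliptic with smooth dependence on the gradient, so the standard $C^{1,\alpha}$ regularity theory for $p$-harmonic functions together with Schauder estimates yields interior $C^{2,\alpha}$ bounds on $v_{e^t}$ that are uniform in $t$. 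Combined with the $C^1_{\mathrm{loc}}$ convergence and uniqueness of the limit, this promotes the convergence to $C^{2,\alpha}_{\mathrm{loc}}$ on a neighborhood of $\{|y| = \cc(\Omega)\}$. I expect this regularity upgrade to be the crux of the argument, since the mean curvature of a level set depends on second derivatives of the defining function while the bare expansion (\ref{expansion}) only controls $u$ to first order.

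With $C^2$ convergence in hand the conclusion is immediate. The mean curvature of a regular level set of a function $w$ is given by the $C^2$-continuous expression $H = \div(\grad w/|\grad w|)$, so the mean curvature of $\{v_{e^t} = 0\}$, computed in the $y$ variable, converges to the mean curvature of the sphere $\{|y| = \cc(\Omega)\}$, which equals $2/\cc(\Omega)$. Finally, mean curvature has units of inverse length, so under the dilation $x = e^t y$ the mean curvature of $\Sigma_t$ in the $x$ variable is $e^{-t}$ times that of $\{v_{e^t}=0\}$ in the $y$ variable. Therefore
\[
H = \frac{1}{e^t}\left(\frac{2}{\cc(\Omega)} + o(1)\right) = \frac{2}{\cc(\Omega)\,e^t}(1 + o(1)),
\]
as claimed. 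As a sanity check, for $\Omega = B_R$ one has $\cc(\Omega) = R$ and $v_\lambda \equiv v_\infty = \log|y| - \log R$, so the computation is exact with no error term.
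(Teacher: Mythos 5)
Your proof is correct, but it takes a genuinely different route from the paper's. The paper's argument is a two-line computation: since $0=\lap_3 u = \div(\vert\grad u\vert\grad u)=\grad\vert\grad u\vert\cdot\grad u+\vert\grad u\vert\lap u$, one gets the structural identity
\[
H \;=\; \div\!\left(\frac{\grad u}{\vert\grad u\vert}\right) \;=\; -2\,\frac{\grad\vert\grad u\vert\cdot\grad u}{\vert\grad u\vert^{2}},
\]
and then substitutes the asymptotics from (\ref{expansion}) together with $\vert x\vert=\cc(\Omega)e^{t}(1+o(1))$. Note that this identity is reused later in the paper (it opens the proof of Proposition \ref{smooth-monotone}), so the paper's route earns its keep beyond this one proposition. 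The trade-off is that, as displayed, (\ref{expansion}) only records $u$ and $\grad u$, while $\grad\vert\grad u\vert$ is a second-derivative quantity; the paper is implicitly leaning on the full strength of the Kichenassamy--V\'eron result (which controls derivatives of the error) to justify the last step. Your blow-down argument manufactures exactly this missing second-order control from the first-order expansion alone: the dilation invariance of $\lap_3$, the gradient lower bound on the annulus, and the resulting uniform ellipticity plus interior $C^{2,\alpha}$ estimates promote $C^1_{\mathrm{loc}}$ to $C^{2,\alpha}_{\mathrm{loc}}$ convergence, after which the mean curvature passes to the limit. So your proof is longer but more self-contained with respect to what (\ref{expansion}) literally states. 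One small point you use implicitly and should make explicit: the zeroth-order expansion $u=\log\vert x\vert+a+o(1)$ is what guarantees that the \emph{entire} rescaled level set $\{v_{e^t}=0\}$ is trapped in the fixed annulus $\mathcal A$ for large $t$ (if $u(x)=t$ then $\vert x\vert e^{-t}=e^{-a}(1+o(1))$), which is needed both to apply the uniform estimates along all of $\Sigma_t$ and to conclude that the $o(1)$ in the statement is uniform over the level set.
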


\begin{proof}
Since $u$ is 3-harmonic, one has 
\[
0 = \lap_3 u = \div(\vert \grad u\vert \grad u) = \grad \vert \grad u\vert \cdot \grad u + \vert \grad u\vert \lap u. 
\]
Hence the mean curvature of $\Sigma_t$ satisfies 
\[
H = \div\left(\frac{\grad u}{\vert \grad u\vert}\right) = \frac{\lap u}{\vert \grad u\vert} - \frac{\grad \vert \grad u\vert \cdot \grad u}{\vert \grad u\vert^2} = -2\frac{\grad \vert \grad u\vert \cdot \grad u}{\vert \grad u\vert^2}. 
\]
The result then follows from the asymptotic expansion (\ref{expansion}), noting that $\vert x\vert = \cc(\Omega)e^{t}(1+o(1))$.
\end{proof}

The previous proposition implies that $\Sigma_t$ is outer-minimizing in the Euclidean metric for sufficiently large $t$. Indeed, the level sets of $u$ form a mean-convex foliation of the region lying outside of $\Sigma_t$. 

\begin{prop}
The Euclidean area of the level sets of $u$ satisfies 
\[
 \area(\Sigma_t) = 4\pi \cc(\Omega)^2 e^{2t}(1+o(1))
\]
\end{prop}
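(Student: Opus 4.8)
The plan is to represent each level set $\Sigma_t$, for $t$ large, as a radial graph over the unit sphere $S^2$ and to read off its area directly from the asymptotic expansion (\ref{expansion}). Since $\grad u \cdot \tfrac{x}{|x|} = \tfrac{1}{|x|}(1+o(1)) > 0$ near infinity, the function $r \mapsto u(r\omega)$ is strictly increasing in $r$ for each fixed direction $\omega \in S^2$ and tends to $+\infty$; hence for all sufficiently large $t$ every ray from the origin meets $\Sigma_t$ in exactly one point. This produces a smooth function $\rho(\cdot,t)\colon S^2 \to (0,\infty)$ with $\Sigma_t = \{\rho(\omega,t)\,\omega : \omega \in S^2\}$.

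First I would pin down the size of $\rho$. The expansion $u(x) = \log|x| + a + o(1)$, which I take to hold uniformly in the angular variable, gives $\log \rho(\omega,t) + a + o(1) = t$, and hence
\[
\rho(\omega,t) = e^{t-a}(1+o(1)) = \cc(\Omega)\, e^{t}\,(1+o(1))
\]
uniformly in $\omega$, using $\cc(\Omega) = e^{-a}$. For a radial graph the area is
\[
\area(\Sigma_t) = \int_{S^2} \rho^2 \sqrt{1 + \frac{|\grad_{S^2}\rho|^2}{\rho^2}}\, d\omega,
\]
so the remaining task is to show that the correction factor tends to $1$, i.e. that $|\grad_{S^2}\rho|/\rho = o(1)$.

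To control the angular variation of $\rho$, I would differentiate the identity $u(\rho(\omega,t)\omega) = t$ in the tangential directions of $S^2$. Since the tangent vectors to $\Sigma_t$ are orthogonal to $\grad u$, one obtains $(\grad u\cdot\omega)\,\grad_{S^2}\rho = -\rho\,(\grad u)^{\mathrm{tan}}$, where $(\grad u)^{\mathrm{tan}}$ is the component of $\grad u$ tangent to the sphere of radius $\rho$. Now the gradient part of (\ref{expansion}), namely $\grad u = \grad\log|x| + o(|x|^{-1})$, gives the radial estimate $\grad u\cdot\omega = \tfrac1\rho(1+o(1))$, while the tangential component satisfies $(\grad u)^{\mathrm{tan}} = o(\rho^{-1})$ because $\grad\log|x|$ is purely radial. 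Therefore $|\grad_{S^2}\rho| = \rho\,o(\rho^{-1})\,\rho\,(1+o(1)) = o(\rho)$, so $|\grad_{S^2}\rho|/\rho = o(1)$ and the integrand reduces to $\rho^2(1+o(1))$. Combining this with the size of $\rho$ yields
\[
\area(\Sigma_t) = \int_{S^2}\rho^2(1+o(1))\, d\omega = 4\pi\,\cc(\Omega)^2 e^{2t}(1+o(1)),
\]
as claimed.

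The main obstacle is precisely this middle step: the $C^0$ expansion alone controls the radius of $\Sigma_t$ but says nothing about how much $\rho$ oscillates in the angular directions, so a priori $\Sigma_t$ need not look round. One genuinely needs the gradient clause of (\ref{expansion}), and in particular the fact that the \emph{tangential} component of $\grad u$ decays like $o(|x|^{-1})$ rather than merely $O(|x|^{-1})$, together with uniformity of all error terms over $S^2$. Once that tangential decay is established, the oscillation of the graph is negligible to leading order and the round-sphere area $4\pi\cc(\Omega)^2 e^{2t}$ falls out.
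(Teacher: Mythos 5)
Your proof is correct, but it takes a genuinely different route from the paper. The paper never writes $\Sigma_t$ as a graph: it uses only the $C^0$ part of the expansion (\ref{expansion}) to trap $\Sigma_t$ between concentric coordinate spheres of radii $C(t)^{-1}\cc(\Omega)e^t$ and $C(t)\cc(\Omega)e^t$ with $C(t)\to 1$, and then sandwiches the area by a geometric comparison: $\Sigma_t$ is outer-minimizing (because, by Proposition \ref{mc}, the level sets of $u$ form a mean-convex foliation of the exterior region), and coordinate spheres are outer-minimizing, so $4\pi C(t)^{-2}\cc(\Omega)^2e^{2t} \le \area(\Sigma_t) \le 4\pi C(t)^2\cc(\Omega)^2 e^{2t}$. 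You instead argue purely analytically: the radial monotonicity $\grad u \cdot \tfrac{x}{|x|} = \tfrac{1}{|x|}(1+o(1))$ makes $\Sigma_t$ a radial graph $\rho(\omega,t)$ for large $t$, and the key point—correctly identified in your last paragraph—is that the \emph{tangential} part of the gradient clause in (\ref{expansion}) decays like $o(|x|^{-1})$, which forces $|\grad_{S^2}\rho|/\rho = o(1)$ and kills the oscillation term in the graph area formula. What each approach buys: the paper's comparison avoids any discussion of graphicality or angular oscillation, at the cost of invoking the outer-minimizing machinery (which itself rests on the mean curvature asymptotics of Proposition \ref{mc}); your argument dispenses with outer-minimizing entirely and uses only the uniform $C^1$ expansion, and as a byproduct yields strictly more information, namely that $\Sigma_t$ is $C^1$-close to a round sphere, not merely that its area has the right asymptotics. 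Two small points to tighten: you should note (as you implicitly do) that the error terms in (\ref{expansion}) are uniform in the angular variable, which the Kichenassamy--V\'eron expansion does provide; and the single-intersection claim for each ray needs the observation that $\Sigma_t$ is contained, for large $t$, in an annulus where the radial monotonicity already holds, with $u<t$ at its inner boundary and $u>t$ at its outer boundary—both immediate from the $C^0$ expansion.
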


\begin{proof}
Recall that $\cc(\Omega) = e^{-a}$. Therefore, the asymptotic expansion (\ref{expansion}) implies that $t$ and $\vert x\vert$ are related by 
$
\vert x\vert = \cc(\Omega) e^{t}(1+o(1)). 
$
In particular, we have 
\[
C(t)\inv \cc(\Omega) e^t \le \vert x\vert \le C(t) \cc(\Omega) e^t
\]
for a constant $C(t) > 1$ which converges to 1 as $t \to \infty$. Hence the level set $\Sigma_t$ lies between the concentric spheres of radius $C(t)\inv \cc(\Omega) e^t$ and $C(t) \cc(\Omega) e^t$ provided $t$ is large enough. Since the coordinate spheres are outer-minimizing and $\Sigma_t$ is also outer-minimizing, this implies that 
\[
4\pi C(t)^{-2} \cc(\Omega)^2 e^{2t} \le \area(\Sigma_t) \le 4 \pi C(t)^2 \cc(\Omega)^2 e^{2t} 
\]
It follows that $\area(\Sigma_t) = 4\pi \cc(\Omega)^2 e^{2t}(1+o(1))$, as needed. 
\end{proof}

\begin{prop}
We have $\int_{\Sigma_t} \vert \grad u\vert^2\, da = 4\pi$ for all regular values $t$. 
\end{prop}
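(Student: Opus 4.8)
The plan is to recognize the quantity $\int_{\Sigma_t}|\grad u|^2\,da$ as the flux of the vector field $X = |\grad u|\,\grad u$ across $\Sigma_t$ and to exploit the fact that $X$ is divergence-free. On a regular level set $\Sigma_t$, the unit normal pointing in the direction of increasing $u$ (that is, toward infinity) is $\nu = \grad u/|\grad u|$, so that $X\cdot\nu = |\grad u|(\grad u\cdot\grad u)/|\grad u| = |\grad u|^2$. Hence
\[
\int_{\Sigma_t}|\grad u|^2\,da = \int_{\Sigma_t} X\cdot\nu\,da
\]
is exactly the outward flux of $X$ through $\Sigma_t$. Since $u$ is $3$-harmonic, $\div X = \div(|\grad u|\,\grad u) = \lap_3 u = 0$. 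First I would apply the divergence theorem on the region bounded by $\Sigma_t$ and a large coordinate sphere $S_R$ to conclude that this flux is independent of the chosen surface: the flux through $\Sigma_t$ equals the flux through $S_R$ for every sufficiently large $R$.

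It then remains to evaluate the constant, which I would do by sending $R\to\infty$ and invoking the asymptotic expansion (\ref{expansion}). On $S_R$ the outward unit normal is $x/|x|$, and (\ref{expansion}) gives $\grad u = x/|x|^2 + o(|x|\inv)$, hence $|\grad u| = |x|\inv(1+o(1))$ and
\[
X\cdot\frac{x}{|x|} = |\grad u|\,\grad u\cdot\frac{x}{|x|} = \frac{1}{|x|^2}(1+o(1)) = \frac{1}{R^2}(1+o(1)).
\]
Integrating over $S_R$, whose Euclidean area is $4\pi R^2$, yields flux $4\pi(1+o(1))$, so the flux through $S_R$ tends to $4\pi$ as $R\to\infty$. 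Since the flux is independent of the surface, it equals $4\pi$ for every regular value $t$.

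The one point requiring care is the presence of critical points of $u$, where $u$ is only $C^{1,\alpha}$ and $X$ is merely continuous, so that the naive divergence theorem between two level sets separated by a critical value is not immediately justified. I would handle this using the weak formulation of $\lap_3 u = 0$: for test functions of the form $\phi = \psi(u)$ with $\psi\in C_c^\infty((s,t))$ and $0<s<t$ regular values, the support $\{s\le u\le t\}$ is compact in $M$, one has $\grad\phi = \psi'(u)\grad u$, and the coarea formula gives
\[
0 = \int_M X\cdot\grad\phi\,dx = \int_M \psi'(u)\,|\grad u|^3\,dx = \int \psi'(\tau)\left(\int_{\Sigma_\tau}|\grad u|^2\,da\right)d\tau.
\]
Since this vanishes for all such $\psi$, the function $\tau\mapsto \int_{\Sigma_\tau}|\grad u|^2\,da$ is constant on the set of regular values, and the asymptotic computation above identifies the constant as $4\pi$. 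I expect this weak-formulation step, rather than the asymptotic evaluation, to be the only genuinely delicate part of the argument.
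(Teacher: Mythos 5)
Your proof is correct and follows essentially the same route as the paper: both arguments rest on the flux of the divergence-free field $\vert\grad u\vert\,\grad u$ being independent of the level set, with the constant identified as $4\pi$ from the asymptotic expansion (\ref{expansion}) (the paper evaluates along $\Sigma_t$ using its area asymptotics, you along coordinate spheres $S_R$ --- an immaterial difference). Your handling of critical points via test functions $\psi(u)$ and the coarea formula is a correct justification of the constancy claim that the paper simply asserts.
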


\begin{proof}
Since $u$ is $3$-harmonic, this quantity is a constant independent of $t$. On the other hand, the asymptotic expansions for $\grad u$ and $\area(\Sigma_t)$ imply that 
\[
\int_{\Sigma_t} \vert \grad u\vert^2\,da = \left(4\pi \cc(\Omega)^2 e^{2t} \right)\left(\frac{1}{\cc(\Omega)^2 e^{2t}} \right)(1+o(1)). 
\]
The result follows by taking the limit as $t\to \infty$. 
\end{proof}

\subsection{Conformally Flat, Asymptotically Flat Manifolds}

We want to investigate the relationship between the conformal capacity and the mass of an asymptotically flat, conformally flat manifold. 

\begin{defn}
Let $\mathcal M$ be the class of all Riemannian manifolds $(M,g)$ satisfying the following conditions:
\begin{itemize}
\item[(i)] $M = \R^3\setminus \Omega$ for some smooth domain $\Omega\subset \R^3$ diffeomorpic to a ball;
\item[(ii)] $g = f^4 g_{\text{euc}}$ for some smooth $f > 0$;
\item[(iii)] there is a constant $C > 0$ such that $\vert f - 1\vert \le C\vert x\vert^{-1}$, $\vert f_i\vert \le C \vert x\vert^{-2}$, and $\vert f_{ij}\vert \le C \vert x\vert^{-3}$ for sufficiently large $\vert x\vert$;
\item[(iv)] $M$ has non-negative scalar curvature;
\item[(v)] $\Sigma = \bd M$ is a minimal 2-sphere with respect to $g$.
\end{itemize}
The estimates in (iii) ensure that $(M,g)$ is asymptotically flat in the usual sense. 
The requirement that $M$ has non-negative scalar curvature is equivalent to $\lap_{\text{euc}} f \le 0$. 
\end{defn}

\begin{defn}
The ADM mass of an asymptotically flat manifold $M$ is the quantity 
\[
\m = \lim_{r\to \infty} \frac{1}{16\pi} \int_{S_r} \sum_{i,j}(g_{ij,i}-g_{ii,j}) \nu^j\, da.
\]
Here $S_r$ denotes the sphere of radius $r$ centered at the origin, $\nu$ is the unit outward normal to $S_r$, and all quantities are computed with respect to the Euclidean background metric. 
\end{defn}



\subsection{The $U$ Function} Fix some $(M,g) \in \mathcal M$. Let $u$ be the conformal capacity function for $\Omega\subset \R^3$.

\begin{defn}
For each regular value $t$ of $u$, define 
\[
U(t) = \int_{\Sigma_t} H_g \vert \grad^g u\vert \, da_g,
\]
where all geometric quantities are computed with respect to the $g$-metric. 
\end{defn} 

The $U$ functional can equivalently be expressed in terms of the Euclidean metric via 
\[
U(t) = \int_{\Sigma_t} H\vert \grad u\vert + \frac{4 f_\nu}{f} \vert \grad u\vert\, da,
\]
where $\nu$ is the outer unit normal to $\Sigma_t$. This is straightforward to verify using the transformation rules for various geometric quantities under a conformal change. 

\begin{prop}
\label{8pi}
We have $U(t) \to 8\pi$ as $t\to \infty$. 
\end{prop}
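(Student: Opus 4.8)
The plan is to compute the limit directly from the asymptotics, working with the Euclidean expression
\[
U(t) = \int_{\Sigma_t} H\vert \grad u\vert\, da + \int_{\Sigma_t} \frac{4f_\nu}{f}\vert \grad u\vert\, da
\]
and treating the two integrals separately. I expect the first integral to carry the entire limit $8\pi$, while the second is a lower-order term that decays to $0$; throughout, the outward unit normal is $\nu = \grad u/\vert\grad u\vert$.

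For the first integral I would write $H\vert\grad u\vert = 2\vert\grad u\vert^2 + (H - 2\vert\grad u\vert)\vert\grad u\vert$, so that
\[
\int_{\Sigma_t} H\vert\grad u\vert\, da = 2\int_{\Sigma_t}\vert\grad u\vert^2\, da + \int_{\Sigma_t}(H - 2\vert\grad u\vert)\vert\grad u\vert\, da = 8\pi + \int_{\Sigma_t}(H - 2\vert\grad u\vert)\vert\grad u\vert\, da,
\]
using the exact identity $\int_{\Sigma_t}\vert\grad u\vert^2\, da = 4\pi$ established above. It then remains to show that the error integral tends to $0$. By Proposition \ref{mc} one has $H = 2(\cc(\Omega)e^t)^{-1}(1+o(1))$, and the expansion (\ref{expansion}) together with $\vert x\vert = \cc(\Omega)e^t(1+o(1))$ on $\Sigma_t$ gives $\vert\grad u\vert = (\cc(\Omega)e^t)^{-1}(1+o(1))$; hence $H - 2\vert\grad u\vert = o((\cc(\Omega)e^t)^{-1})$ uniformly on $\Sigma_t$. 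Multiplying by $\vert\grad u\vert = O((\cc(\Omega)e^t)^{-1})$ and integrating against the area $\area(\Sigma_t) = 4\pi\cc(\Omega)^2 e^{2t}(1+o(1))$ shows that this error integral is $o(1)$.

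For the second integral I would exploit the asymptotic decay of the conformal factor. Since $\vert f_\nu\vert \le \vert\grad f\vert \le C\vert x\vert^{-2}$ and $f\to 1$, the integrand is bounded by a constant times $\vert x\vert^{-2}\vert\grad u\vert = O(\vert x\vert^{-3}) = O((\cc(\Omega)e^t)^{-3})$ uniformly on $\Sigma_t$. Integrating against $\area(\Sigma_t) = O(e^{2t})$ then gives a bound of order $e^{-t}$, so the second integral tends to $0$. Combining the two estimates yields $U(t)\to 8\pi$.

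The main obstacle is ensuring that the $o(1)$ remainders in the expansions are uniform over $\Sigma_t$ rather than merely pointwise, since I need to pull them outside the integrals. This is handled by observing that $\Sigma_t$ is trapped between the coordinate spheres of radii $C(t)^{-1}\cc(\Omega)e^t$ and $C(t)\cc(\Omega)e^t$ with $C(t)\to 1$, so that the direction-uniform remainders in (\ref{expansion}) and in Proposition \ref{mc} translate into remainders that are uniform across the whole level set. This is exactly the mechanism already used in the proofs of the preceding propositions, so no new input is required.
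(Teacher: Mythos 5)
Your proposal is correct and follows essentially the same route as the paper: both use the Euclidean expression for $U$, show the conformal term $\int_{\Sigma_t} \frac{4f_\nu}{f}\vert\grad u\vert\, da$ decays via the fall-off of $f$ and the area growth of $\Sigma_t$, and extract the limit $8\pi$ from the mean curvature term using the asymptotics of $H$, $\vert\grad u\vert$, and $\area(\Sigma_t)$. The only cosmetic difference is that you anchor the main term on the exact identity $\int_{\Sigma_t}\vert\grad u\vert^2\, da = 4\pi$ and estimate $(H-2\vert\grad u\vert)\vert\grad u\vert$ as an error, whereas the paper multiplies the three asymptotic factors directly; your uniformity remark (trapping $\Sigma_t$ between coordinate spheres) is the same mechanism the paper relies on implicitly.
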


\begin{proof}
We will use the formula for $U$ in terms of the Euclidean metric. The asymptotics for $f$ and $\vert \grad u\vert$ and $\area(\Sigma_t)$ imply that 
\[
\int_{\Sigma_t} \frac{f_\nu}{f}\vert \grad u\vert\, da \to 0
\]
as $t\to \infty$. On the other hand, we have 
\[
\int_{\Sigma_t} H\vert \grad u\vert \, da = \left(4\pi \cc(\Omega)^2 e^{2t} \right) \left(\frac{2}{\cc(\Omega) e^t} \right) \left(\frac{1}{\cc(\Omega)e^t} \right) (1+o(1)).
\]
Therefore one has 
\[
\int_{\Sigma_t} H\vert \grad u\vert \, da \to 8\pi
\]
as $t\to\infty$ and the result follows. 
\end{proof}

\subsection{The Model Case} 

We conclude this section by studying the model case of Schwarzschild. 
Recall that the mass $m$ Schwarzschild manifold is given by 
\[
(M,g) = \left(\R^3\setminus B_{\frac m 2}, \left(1+\frac{m}{2r}\right)^4 g_{\text{euc}}\right).
\]
We have 
\[
u = \log\left(\frac{2r}{m}\right) = \log r - \log\left(\frac{m}{2}\right) 
\]
and $\cc(B_{\frac m 2}) = \frac{m}{2}$. Thus Schwarzschild is an equality case for Theorem \ref{main}. 

On Schwarzschild, we have $f = 1 + \frac{m}{2r}$. Moreover, $r$ is related to $t$ via $r = \frac{m}{2}e^t$. Therefore, using the Euclidean formula for $U$, it is straightforward to compute that 
\begin{align*}
U(t) &= \int_{\bd B_{\frac{m}{2}e^t}}  H\vert \grad u\vert + \frac{4 f_r}{f}\vert \grad u\vert \, da \\
&= (\pi m^2 e^{2t})\left(\frac{8}{m^2 e^{2t}} - \frac{16}{m^2 e^{2t}(e^t + 1)} \right) = 8\pi \left(\frac{e^t - 1}{e^t+1}\right). 
\end{align*}
In particular, this function does not depend on $m$. 

\section{The Smooth Setting} 
\label{smooth}

In this section, we prove a monotonicity formula holding along the level sets of a $3$-harmonic function $u$ in the absence of critical points. 
Fix some $(M,g)\in \mathcal M$. 
Let $u$ be the conformal capacity function for $\Omega$.  Since the 3-Laplacian is conformally invariant, $u$ satisfies the equation $\lap_3 u = 0$ with respect to both the Euclidean metric and the $g$ metric. Throughout this section,  we assume that $u$ has no critical points. In this case, $\Sigma_t$ is a smooth 2-sphere for all $t\ge 0$. 

\begin{prop}
\label{smooth-monotone}
The function $U$ satisfies the differential inequality 
\[
U' + \frac{U^2}{16\pi} \le 4\pi. 
\]
\end{prop}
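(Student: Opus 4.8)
The plan is to differentiate $U$ directly using the level-set (first variation) calculus in the $g$-metric, establish the intermediate bound $U'(t)\le 4\pi - \tfrac14\int_{\Sigma_t}H_g^2\,da_g$, and then deduce the stated inequality by a single application of Cauchy--Schwarz combined with the conformally invariant identity $\int_{\Sigma_t}\vert\grad^g u\vert^2\,da_g = 4\pi$.

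Since $u$ has no critical points, the level sets foliate $M$ and $\nu = \grad^g u/\vert\grad^g u\vert$ is a smooth unit normal; advancing from $\Sigma_t$ to $\Sigma_{t+dt}$ is a normal flow with lapse $\psi = \vert\grad^g u\vert^{-1}$. First I would record the evolution identities $\partial_t(da_g) = \psi H_g\,da_g$ and, for any function $\phi$ on $M$, $\frac{d}{dt}\int_{\Sigma_t}\phi\,da_g = \int_{\Sigma_t}\psi\,[\nu(\phi)+\phi H_g]\,da_g$. The key algebraic input is the identity $H_g\vert\grad^g u\vert = -2\,\nu(\vert\grad^g u\vert)$, which comes from $\lap_3 u = 0$ exactly as in Proposition \ref{mc} (the $3$-Laplacian is conformally invariant, so it holds in the $g$-metric). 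Applying the variation formula to $\phi = H_g\vert\grad^g u\vert$ and using this identity to eliminate $\nu(\vert\grad^g u\vert)$, the $\vert\grad^g u\vert$ factors cancel and I expect the clean expression
\[
U'(t) = \int_{\Sigma_t}\Big(\nu(H_g) + \tfrac12 H_g^2\Big)\,da_g.
\]

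The crux is computing $\nu(H_g)$. For this I would combine the first variation of mean curvature, $\partial_t H_g = -\lap_{\Sigma_t}\psi - \psi\big(\vert A\vert^2 + \ric(\nu,\nu)\big)$, with $\partial_t H_g = \psi\,\nu(H_g)$, giving
\[
\nu(H_g) = -\vert\grad^g u\vert\,\lap_{\Sigma_t}\!\big(\vert\grad^g u\vert^{-1}\big) - \vert A\vert^2 - \ric(\nu,\nu).
\]
Integrating the Laplacian term by parts over the closed surface $\Sigma_t$ yields $-\int_{\Sigma_t}\vert\grad^g u\vert^{-2}\big\vert\grad_{\Sigma_t}\vert\grad^g u\vert\big\vert^2\,da_g \le 0$, so it may be discarded. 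I then eliminate $\ric(\nu,\nu)$ via the traced Gauss equation $2K = R - 2\ric(\nu,\nu) + H_g^2 - \vert A\vert^2$, where $K$ is the Gauss curvature of $\Sigma_t$ and $R\ge 0$ is the ambient scalar curvature. Collecting terms, the integrand $\nu(H_g)+\tfrac12 H_g^2$ should reduce (modulo the discarded gradient term) to $K - \tfrac12 R - \tfrac12\vert A\vert^2$, and writing $\vert A\vert^2 = \vert\mathring A\vert^2 + \tfrac12 H_g^2$ turns this into $K - \tfrac12 R - \tfrac12\vert\mathring A\vert^2 - \tfrac14 H_g^2$.

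Finally I would invoke Gauss--Bonnet, $\int_{\Sigma_t}K\,da_g = 4\pi$ (valid since $\Sigma_t$ is a $2$-sphere), and drop the non-positive contributions of $R$, $\vert\mathring A\vert^2$, and the gradient term to obtain $U'(t)\le 4\pi - \tfrac14\int_{\Sigma_t}H_g^2\,da_g$. Cauchy--Schwarz then gives $U(t)^2 = \big(\int_{\Sigma_t}H_g\vert\grad^g u\vert\big)^2 \le \big(\int_{\Sigma_t}H_g^2\big)\big(\int_{\Sigma_t}\vert\grad^g u\vert^2\big) = 4\pi\int_{\Sigma_t}H_g^2$, so $\tfrac14\int_{\Sigma_t}H_g^2 \ge U^2/(16\pi)$ and the inequality follows. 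I expect the main obstacle to be the sign bookkeeping in the first variation of $H_g$ and the normal-derivative identity (ensuring $\partial_t = \psi\nu$ is applied consistently to quantities defined through the foliation), since an error there would flip the decisive $-\tfrac14\int_{\Sigma_t}H_g^2$ term; the remaining geometric inputs (the Gauss equation, Gauss--Bonnet, $R\ge 0$, and conformal invariance of $\int\vert\grad u\vert^2$) are then routine to assemble.
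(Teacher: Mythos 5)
Your proposal is correct and follows essentially the same route as the paper's proof: the identity $H_g\vert\grad^g u\vert = -2\nu(\vert\grad^g u\vert)$ from $\lap_3 u = 0$, the first variation of area and mean curvature giving $U' = \int_{\Sigma_t}\big(\nu(H_g)+\tfrac12 H_g^2\big)\,da_g$, integration by parts of the Laplacian term, the traced Gauss equation plus Gauss--Bonnet to reach $U'\le 4\pi - \tfrac14\int_{\Sigma_t}H_g^2\,da_g$, and Cauchy--Schwarz with $\int_{\Sigma_t}\vert\grad^g u\vert^2\,da_g = 4\pi$. The only cosmetic difference is that you isolate $\nu(H_g)$ explicitly before substituting the variation formula, whereas the paper writes the combined expression in one line; the content is identical.
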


\begin{proof} All of the following calculations are understood to take place with respect to the $g$-metric.  We have chosen not to explicitly indicate this to avoid cluttering the notation. 
As in the proof of Proposition \ref{mc}, the fact that $u$ is $3$-harmonic implies that 
\[
H = -2\frac{\grad\vert \grad u\vert \cdot \grad u}{\vert \grad u\vert^2}. 
\]
We have 
\begin{align*}
U'(t) &= \int_{\Sigma_t} H \frac{\grad \vert \grad u\vert \cdot \grad u}{\vert \grad u\vert^2} + H^2 - \vert \grad u\vert \left(\lap_{\Sigma_t} \frac{1}{\vert \grad u\vert} + (\vert A\vert^2 + \ric(\nu,\nu))\frac{1}{\vert \grad u\vert}\right)\, da\\
&= \int_{\Sigma_t} \frac{H^2}{2} - \frac{\vert \grad^{\Sigma_t}\vert \grad u\vert \vert^2}{\vert \grad u\vert^2} - \vert A\vert^2 - \ric(\nu,\nu)\, da\\
&= \int_{\Sigma_t} K - \frac{\vert \grad^{\Sigma_t}\vert \grad u\vert \vert^2}{\vert \grad u\vert^2} - \frac{H^2}{4} - \frac { \vert \mathring A\vert^2}{2} - \frac{R}{2} \, da\\
&\le 4\pi - \frac{1}{4}\int_{\Sigma_t} H^2\, da,
\end{align*} 
where we used the Gauss-Bonnet theorem to get to the last line. 
Next observe that 
\[
U(t)^2 = \left(\int_{\Sigma_t} H\vert \grad u\vert\, da\right)^2 \le \left(\int_{\Sigma_t} \vert \grad u\vert^2\, da\right) \left(\int_{\Sigma_t} H^2\, da\right) = 4\pi \int_{\Sigma_t} H^2\, da. 
\]
Therefore, we obtain 
\[
U'(t) \le 4\pi - \frac{U(t)^2}{16\pi},
\]
as needed. 
\end{proof}

\begin{prop}
The function $U$ satisfies 
\[
U(t) \le 8\pi \left(\frac{e^t - 1}{e^t + 1}\right). 
\] 
\end{prop}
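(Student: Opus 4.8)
The plan is to compare $U$ with the explicit Schwarzschild profile
\[
U_s(t) = 8\pi\left(\frac{e^t-1}{e^t+1}\right)
\]
computed in the Model Case subsection, and to run a forward Gronwall-type comparison anchored at $t=0$. So I first record two facts about $U_s$. A direct differentiation (essentially done already in the Model Case) shows that $U_s$ solves the ODE $U_s' = 4\pi - \frac{U_s^2}{16\pi}$ with \emph{equality}, so it is exactly the equality case of the differential inequality in Proposition \ref{smooth-monotone}. Second, $U_s(0)=0$.

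The crucial observation is that $U$ satisfies the same initial condition. Since $\Sigma_0 = \{u=0\} = \Sigma$ is a minimal surface with respect to $g$, we have $H_g \equiv 0$ on $\Sigma_0$, whence
\[
U(0) = \int_{\Sigma_0} H_g \vert \grad^g u\vert\, da_g = 0 = U_s(0).
\]
I would then set $V = U - U_s$ and combine the inequality $U' \le 4\pi - \frac{U^2}{16\pi}$ from Proposition \ref{smooth-monotone} with the equality $U_s' = 4\pi - \frac{U_s^2}{16\pi}$ to obtain
\[
V' \le \frac{U_s^2 - U^2}{16\pi} = -\frac{U+U_s}{16\pi}\, V .
\]

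Multiplying by the positive, continuous integrating factor $\mu(t) = \exp\!\big(\frac{1}{16\pi}\int_0^t (U+U_s)\, ds\big)$ turns this into $(\mu V)' \le 0$, so $\mu V$ is non-increasing on $[0,\infty)$. Because $\mu(0)V(0) = V(0) = 0$, it follows that $\mu(t)V(t) \le 0$ for all $t \ge 0$, and since $\mu > 0$ this yields $V(t) \le 0$, i.e. $U(t) \le U_s(t)$, which is the claim.

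I expect the only genuine content here --- as opposed to routine ODE manipulation --- to be the identification of the correct initial data: recognizing that the $g$-minimality of $\Sigma$ forces $U(0)=0$, matching the Schwarzschild value exactly and pinning the comparison at $t=0$. Everything downstream is a standard integrating-factor argument, valid because the smoothness and no-critical-point hypotheses make $U$ a $C^1$ function of $t$ with $U+U_s$ continuous. I note that the asymptotic value $U \to 8\pi$ from Proposition \ref{8pi} is \emph{not} needed for this comparison; the boundary condition at $t=0$ alone suffices to run the argument forward.
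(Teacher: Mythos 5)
Your proof is correct, but it takes a genuinely different route from the paper's. The paper also starts from $U(0)=0$ and the fact that $U_s$ solves $U_s' + \frac{U_s^2}{16\pi} = 4\pi$ exactly, but then argues by contradiction: it invokes Proposition \ref{8pi} ($U(t)\to 8\pi$, so $U-U_s\to 0$ at infinity) to guarantee that if $U-U_s$ were ever positive it would attain a positive \emph{interior maximum}, and at that maximum point it derives $4\pi = U_s' + \frac{U_s^2}{16\pi} < U' + \frac{U^2}{16\pi} \le 4\pi$, using $(U-U_s)'=0$ and $U > U_s \ge 0$ there. Your Gronwall-type argument with the integrating factor $\mu(t) = \exp\bigl(\frac{1}{16\pi}\int_0^t (U+U_s)\,ds\bigr)$ replaces this maximum-principle step and, as you correctly observe, dispenses entirely with the asymptotic input $U\to 8\pi$ --- the comparison is pinned at $t=0$ alone, which is a modest but real economy (the paper's argument genuinely needs the limit at infinity, since without it the supremum of $U-U_s$ might not be attained). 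Both arguments rely on $U$ being $C^1$, which is legitimate in the smooth, no-critical-point setting of Section \ref{smooth} where this proposition lives; note that neither proof survives as-is once critical points appear, which is why the paper reproves the bound via the monotone quantity $Q$ in Section \ref{critical} (Corollary \ref{U-bound}). Your identification of the initial condition $U(0)=0$ from $g$-minimality of $\Sigma$ matches the paper exactly, and your verification that $U_s$ is the equality case of Proposition \ref{smooth-monotone} is consistent with the Model Case computation.
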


\begin{proof}
First observe that $U(0) = 0$ since $\Sigma$ is minimal in the $g$ metric. Next recall that $U(t) \to 8\pi$ as $t\to \infty$ by Proposition \ref{8pi}. The function 
\[
U_s(t) = 8\pi \left(\frac{e^t - 1}{e^t + 1}\right)
\]
solves 
\[
U_s' + \frac{U_s^2}{16\pi} = 4\pi
\]
with $U_s(0) = 0$ and $U_s(t) \to 8\pi$ as $t\to \infty$. As mentioned above, $U_s$ is nothing but the $U$ function in the model case of Schwarzschild. 

Suppose for contradiction that the conclusion of the proposition fails. Then there is a time $t$ where $U - U_s$ attains a positive maximum. At this time $t$ we have 
\[
4\pi =  U_s' + \frac{U_s^2}{16\pi} <  U' + \frac{U^2}{16\pi} \le 4\pi,
\] 
which is a contradiction. 
\end{proof}

\section{Monotonicity in the Presence of Critical Points} 
\label{critical}
In this section, we aim to show that monotonicity still holds in the presence of critical points.
We closely follow the strategy developed by Agostiniani-Mantegazza-Mazzieri-Oronzio \cite{agostiniani2022riemannian}.  

Supposing that $t$ is a regular value of $u$, define the following quantity:
\begin{align*}
    Q(t):=8\pi(e^t+2-e^{-t})-(e^t+1)^2e^{-t}U(t).
\end{align*}
In this section, we prove the following monotonicity formula.
\begin{prop}\label{prop: main monotonicity}
    Suppose $(M^3,g)$ is an asymptotically flat manifold with compact minimal boundary $\Sigma$. Assume further that $R_g\geq 0$, $H_2(M^3,\Sigma)=0$. Let $u$ be the $3$-harmonic function defined on $(M^3,g)$ which is $0$ on $\Sigma$ and approaches $\log|x|$ as $|x|\to \infty$. Then for any regular values $0\leq s<t\leq\infty$, we have that
    \begin{align}
        Q(t)\geq Q(s).
    \end{align}
    The equality holds if and only if $(M^3,g)$ is Schwarzschild outside $\Sigma_s$.
\end{prop}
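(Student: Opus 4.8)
The plan is to first dispatch the case with no critical points by a direct computation, and then, following Agostiniani--Mantegazza--Mazzieri--Oronzio \cite{agostiniani2022riemannian}, to promote this to the general case. The starting observation is that $(e^t+1)^2e^{-t} = e^t+2+e^{-t}$, so on an interval of regular values $Q$ is differentiable with
\begin{align*}
Q'(t) = 8\pi(e^t+e^{-t}) - (e^t-e^{-t})U(t) - (e^t+2+e^{-t})U'(t).
\end{align*}
Inserting $U' \le 4\pi - \tfrac{U^2}{16\pi}$ from Proposition \ref{smooth-monotone} and regrouping as a quadratic in $U$, I expect
\begin{align*}
Q'(t) &\ge 4\pi(e^t+e^{-t}-2) - (e^t-e^{-t})U(t) + \frac{(e^t+2+e^{-t})U(t)^2}{16\pi} \\
&= \frac{e^t+2+e^{-t}}{16\pi}\bigl(U(t)-U_s(t)\bigr)^2,
\end{align*}
where $U_s(t)=8\pi\tfrac{e^t-1}{e^t+1}$ is the Schwarzschild profile. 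The discriminant of the quadratic vanishes identically — this is precisely why $Q$ was chosen — so the right-hand side is a genuine perfect square and $Q'\ge 0$, with $Q'=0$ exactly when $U=U_s$.

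Two inputs are needed for the differential inequality to survive once critical points are allowed. First, the Cauchy--Schwarz step uses $\int_{\Sigma_t}|\grad u|^2\,da = 4\pi$; since $\lap_3 u = 0$ this flux is conserved between level sets and is conformally invariant, so it stays pinned to $4\pi$ across the measure-zero critical set. Second, the Gauss--Bonnet step requires $\int_{\Sigma_t}K\,da \le 4\pi$, and this is where $H_2(M,\Sigma)=0$ enters: I would use it, together with the maximum principle (any closed level-set component bounding a region on which $u$ has no other boundary data must force $u$ constant there, a contradiction), to conclude that each regular $\Sigma_t$ is connected, whence $\int_{\Sigma_t}K\,da = 2\pi\chi(\Sigma_t)\le 4\pi$. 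With these two facts the computation of Section \ref{smooth} applies verbatim on every interval of regular values.

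The main obstacle is to pass from monotonicity on the open set of regular values to $Q(t)\ge Q(s)$ \emph{across} critical values. Following \cite{agostiniani2022riemannian}, the strategy is to show that $Q$ can only jump upward at a critical value. The set of critical values has measure zero, and by the regularity theory for $3$-harmonic functions the critical set is small enough that the coarea formula applies, so the bulk quantities obtained by integrating $Q'$ are insensitive to it. To control the genuine level-set integrals defining $U$ as $t$ approaches a critical value, I would establish a one-sided semicontinuity statement for $U$ so that the corresponding jump of $Q$ carries the correct sign; the favorable sign is tied to the change in Euler characteristic of $\Sigma_t$ as level sets merge or shed handles, which is where the topological hypothesis is used a second time. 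Combining the absolutely continuous monotonicity on regular intervals with the non-negative jumps then yields $Q(t)\ge Q(s)$ for all regular $0\le s<t\le\infty$. I expect this critical-point analysis, rather than the algebra of the differential inequality, to be the technical heart of the argument.

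For the rigidity statement, equality $Q(t)=Q(s)$ forces $Q$ to be constant on $[s,t]$, hence $Q'\equiv 0$ on regular values and all jumps vanish. The perfect-square identity then gives $U\equiv U_s$ there, and tracing back through the chain of inequalities in Section \ref{smooth} forces each discarded term to vanish: the level sets are totally umbilic ($\mathring A = 0$) with $|\grad u|$ constant on each $\Sigma_t$, the scalar curvature $R_g$ vanishes, and each $\Sigma_t$ is a round sphere. A standard ODE analysis of the resulting spherically symmetric, scalar-flat metric then identifies the region outside $\Sigma_s$ with a piece of Schwarzschild.
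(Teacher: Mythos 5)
Your smooth-case computation is correct, and it is an efficient repackaging of what the paper does: the identity $(e^t+1)^2e^{-t}=e^t+2+e^{-t}$ together with the Riccati inequality of Proposition \ref{smooth-monotone} does yield $Q'(t)\ge \frac{e^t+2+e^{-t}}{16\pi}\left(U(t)-U_s(t)\right)^2$, with vanishing discriminant exactly as you say; the paper reaches the same cancellation by computing $\div X$ for the vector field $X=2(e^u+2-e^{-u})|\nabla u|\nabla u+Y$ and completing a square under the coarea integral, which has the added benefit of isolating the $|\mathring A|^2$ term needed later for stability. Your uses of the conserved flux $\int_{\Sigma_t}|\nabla u|^2\,da=4\pi$ and of $H_2(M,\Sigma)=0$ to force connected regular level sets (so that Gauss--Bonnet gives $\int_{\Sigma_t}K\,da\le 4\pi$) also match the paper.

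The genuine gap is in your treatment of critical points, which you correctly identify as the technical heart but do not actually supply. You assert that ``the set of critical values has measure zero,'' but this is precisely what fails: $u$ is only $C^{1,\alpha}$, and Sard's theorem for a function on a $3$-manifold requires $C^3$ regularity, so nothing rules out a positive-measure set of critical values --- the paper flags exactly this at the start of Section \ref{sec:epsilon-approx}. With that assumption gone, your plan of ``absolute continuity on regular intervals plus nonnegative jumps'' has no foundation, and the proposed one-sided semicontinuity of $U$ with jump sign governed by the Euler characteristic is neither proved nor how \cite{agostiniani2022riemannian} proceeds: even when critical values are negligible, the mechanism there (and in the paper) is the divergence theorem applied on $\{s\le u\le t\}$ to the truncated field $X_k=2(e^u+2-e^{-u})|\nabla u|\nabla u+\eta_k(|\nabla u|)Y$, using the sign $\eta_k'(|\nabla u|)\langle\nabla|\nabla u|,Y\rangle\ge 0$ and monotone/dominated convergence, so no jump analysis is ever needed. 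For the fully general case the paper's essential device, entirely absent from your proposal, is the $\epsilon$-regularization $\div\left(\sqrt{|\nabla u_\epsilon|^2+\epsilon^2}\,\nabla u_\epsilon\right)=0$ on $\{0\le u\le T\}$: $u_\epsilon$ is smooth, Sard applies to it, the flux becomes $4\pi(1+f(\epsilon))$ rather than exactly $4\pi$ (contrary to your ``pinned'' claim, which holds for $u$ but not for the regularization), one proves an almost-monotonicity for $Q_\epsilon$ with error term $2f(\epsilon)\int(e^\tau+e^{-\tau})\,d\tau$, and a convergence lemma $Q_\epsilon(t)\to Q(t)$ at regular values transfers the conclusion to $u$. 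Without this scheme your argument does not close; your rigidity sketch is fine in outline but must likewise be run through the same approximation.
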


Note that $t = 0$ is always a regular value of $u$ by the Hopf-type lemma for $p$-harmonic functions \cite{tolksdorf1983dirichletproblem}. Hence as a direct corollary, by comparing $Q(t)$ with $Q(0)$, we obtain the following.
\begin{corollary}
\label{U-bound}
    Suppose $(M^3,g)$ is an asymptotically flat manifold with compact minimal boundary $\Sigma$. Assume further that $R_g\geq 0$, $H_2(M^3,\Sigma)=0$. Let $u$ be the $3$-harmonic function defined on $(M^3,g)$ which is $0$ on $\Sigma$ and approaches $\log|x|$ as $|x|\to \infty$. Then for any regular values $t\in(0,\infty]$, we have that
    \begin{align}
        U(t)\leq 8\pi\left(\frac{e^t-1}{e^t+1}\right).
    \end{align}
\end{corollary}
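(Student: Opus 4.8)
The plan is to reduce the statement to the pointwise differential inequality already established in the smooth setting, and then to propagate it across the critical values of $u$ following the regularization scheme of \cite{agostiniani2022riemannian}. First I would note that the proposition is just the assertion that $Q$ is non-decreasing along regular values. Writing $(e^t+1)^2 e^{-t} = e^t+2+e^{-t}$, a differentiation on any interval free of critical values gives
\[
Q'(t) = 8\pi(e^t+e^{-t}) - (e^t-e^{-t})U(t) - (e^t+2+e^{-t})U'(t).
\]
Substituting the inequality $U'(t) \le 4\pi - U(t)^2/(16\pi)$ from Proposition \ref{smooth-monotone} and abbreviating $a = e^{t/2}-e^{-t/2}$, $b = e^{t/2}+e^{-t/2}$, the right-hand side collapses to a perfect square:
\[
Q'(t) \ge 4\pi a^2 - ab\,U(t) + \tfrac{b^2}{16\pi}U(t)^2 = \left(\sqrt{4\pi}\,a - \tfrac{b}{\sqrt{16\pi}}U(t)\right)^2 \ge 0.
\]
Hence $Q$ is non-decreasing on any interval of regular values, and $Q'(t)=0$ forces equality throughout the chain of inequalities behind Proposition \ref{smooth-monotone}. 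All of the difficulty is therefore concentrated at the critical values of $u$.

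To cross those critical values I would pass to a bulk formulation that does not see the degeneration of individual level sets. By the coarea formula,
\[
\int_s^t U(\tau)\,d\tau = \int_{\{s<u<t\}} H_g\,|\grad^g u|^2\, dv_g,
\]
whose integrand equals $-2\,\grad|\grad u|\cdot\grad u$ and is thus bounded and defined almost everywhere on $M$. The goal is to establish the distributional inequality $Q'\ge 0$ on $(0,\infty)$, that is $-\int_0^\infty Q(t)\varphi'(t)\,dt \ge 0$ for every non-negative $\varphi\in C_c^\infty((0,\infty))$. I would obtain this by repeating the computation of Proposition \ref{smooth-monotone}, but integrated against $\varphi(u)$ over all of $M$ and localized by cutoff functions $\chi_\delta$ that vanish in a neighborhood of the critical set $\{|\grad u| = 0\}$. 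By Sard's theorem almost every $t$ is a regular value, the critical set has measure zero, and $u$ is smooth in its complement, so every integration by parts and every use of the Bochner identity is legitimate on the support of $\chi_\delta$.

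Two ingredients then drive the argument. The Gauss--Bonnet step demands $\int_{\Sigma_t} K\,da \le 4\pi$, i.e. that the regular level sets are connected; this is precisely where the hypothesis $H_2(M,\Sigma)=0$ enters, since together with the absence of interior extrema of $u$ (the strong maximum principle for $\lap_3$) it forces each regular level set to be connected and homologous to $\Sigma$. The analytic core is the limit $\delta \to 0$: one must show that the error terms generated by $\grad\chi_\delta$, supported in shrinking neighborhoods of the critical set, tend to zero. This rests on the $C^{1,\alpha}$ regularity of $u$, on integral estimates for $\grad^2 u$ near $\{|\grad u|=0\}$, and on the uniform bound $\int_{\Sigma_t}|\grad u|^2\,da = 4\pi$, all inherited from \cite{agostiniani2022riemannian}. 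Once the distributional monotonicity is in hand, $Q$ agrees almost everywhere with a non-decreasing function; since $U$, and hence $Q$, is continuous at regular values, $Q(t)\ge Q(s)$ follows for all regular $0\le s<t$, and the case $t=\infty$ is recovered by letting $t\to\infty$ using $U(t)\to 8\pi$.

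For the rigidity statement, equality $Q(t)=Q(s)$ forces $Q'\equiv 0$ on $(s,t)$, so the perfect-square identity of the first paragraph shows that every inequality in Proposition \ref{smooth-monotone} is saturated on $\{s<u<t\}$: the traceless second fundamental form $\mathring A$ vanishes, $|\grad u|$ is constant on each level set, $R_g \equiv 0$, and each $\Sigma_\tau$ is a round sphere. These are exactly the conditions characterizing a Schwarzschild exterior, which yields the claimed rigidity. I expect the main obstacle to be the limiting argument of the third paragraph: controlling the cutoff error terms near the measure-zero critical set, and thereby proving that the distributional derivative of $Q$ is genuinely non-negative rather than merely non-negative on the open set of regular values.
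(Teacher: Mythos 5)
Your smooth-case paragraph is correct and is essentially the paper's computation repackaged at the ODE level: using $(e^t+1)^2e^{-t}=e^t+2+e^{-t}$, your perfect square $\bigl(2\sqrt{\pi}\,a-\tfrac{b}{4\sqrt{\pi}}U\bigr)^2$ is the integrated (Cauchy--Schwarz) counterpart of the pointwise square $\bigl[(e^{\tau}-1)|\nabla u|-(e^{\tau}+1)|\nabla^{\perp}|\nabla u||/|\nabla u|\bigr]^2$ that the paper completes inside the bulk integral, and you correctly identify the role of $H_2(M,\Sigma)=0$ (connectedness of regular level sets, so Gauss--Bonnet gives $\int_{\Sigma_t}K\le 4\pi$). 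You do leave the endpoint of the corollary implicit: one must note that $t=0$ is a regular value by the Hopf-type lemma for $p$-harmonic functions, that $U(0)=0$ since $\Sigma$ is $g$-minimal, hence $Q(0)=16\pi$, and that the stated bound is exactly the rearrangement of $Q(t)\ge Q(0)$; this is routine but it is the actual content of the corollary.

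The genuine gap is your sentence ``by Sard's theorem almost every $t$ is a regular value, the critical set has measure zero.'' The function $u$ is only $C^{1,\alpha}$ across its critical set (smooth only away from it), and Sard's theorem for a real-valued function on a $3$-manifold requires $C^3$ regularity; the paper states explicitly that this is the case one \emph{cannot} exclude. Your cutoff-plus-distributional scheme is, in substance, the paper's intermediate step (``monotonicity with negligible critical points,'' with cutoffs $\eta_k(|\nabla u|)$), which is valid only under the additional hypothesis that the set $\mathcal N$ of critical values is null. Moreover, even in that regime the paper never needs the integral estimates on $\nabla^2 u$ near $\{\nabla u=0\}$ that you invoke; it instead observes the sign $\eta_k'(|\nabla u|)\langle\nabla|\nabla u|,Y\rangle\ge 0$, so the cutoff error terms have a favorable sign and can simply be discarded before passing $k\to\infty$ by monotone/dominated convergence. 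For the general case the paper replaces $u$ on $\{0\le u\le T\}$ by the DiBenedetto regularization $u_{\epsilon}$ solving $\div(\sqrt{|\nabla u_{\epsilon}|^2+\epsilon^2}\,\nabla u_{\epsilon})=0$, which is smooth (so Sard does apply to $u_{\epsilon}$), proves the approximate monotonicity $Q_{\epsilon}(t)-Q_{\epsilon}(s)\ge 2f(\epsilon)\int(e^{\tau}+e^{-\tau})\,d\tau$ with $f(\epsilon)\to 0$, and concludes via $Q_{\epsilon}(t)\to Q(t)$ at regular values. Without some such regularization, the distributional derivative of your $Q$ could a priori carry a negative singular part concentrated on a non-null set of critical values --- precisely the scenario your argument must rule out and, as written, cannot. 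Your rigidity discussion is extraneous to this corollary (it belongs to Proposition \ref{prop: main monotonicity}) and inherits the same gap.
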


\subsection{No critical points}

First consider the simplest case when $u$ has no critical points. Define the vector field
\begin{align*}
    X:=2(e^u+2-e^{-u})|\nabla u|\nabla u-(e^u+1)^2e^{-u}\left(\frac{\Delta u}{|\nabla u|}\nabla u-\nabla|\nabla u|\right).
\end{align*}
By the definition of $Q(t)$, we can rewrite $Q(t)$ as
\begin{align*}
    Q(t)=\int_{\Sigma_t}\langle X,\frac{\nabla u}{|\nabla u|}\rangle.
\end{align*}
Then by Bochner's formula, we compute that
\begin{align*}
    \div X=&2(e^u+e^{-u})|\nabla u|^3+2(e^u-e^{-u})\langle \nabla |\nabla u|,\nabla u\rangle\\
    &+(e^u+1)^2e^{-u}\left(\frac{|\nabla^{\Sigma_t}|\nabla u||^2}{|\nabla u|^2}+\|A\|^2+\ric(\nu,\nu)\right)|\nabla u|-2(e^u+1)^2e^{-u}\frac{\langle \nabla |\nabla u|,\nabla u\rangle^2 }{|\nabla u|^3}.
\end{align*}
Now, with the divergence theorem and the traced Gauss equation, we have that
\begin{align*}
    Q(t)-Q(s)&=\int_{\Sigma_t}\langle X,\frac{\nabla u}{|\nabla u|}\rangle da_g-\int_{\Sigma_s}\langle X,\frac{\nabla u}{|\nabla u|}\rangle da_g\\
             &=\int_{\{s\leq u\leq t\}}\div XdV_g\\
             &=\int_s^t\int_{\Sigma_{\tau}}\frac{\div X}{|\nabla u|}da_g\\
             &\geq \int_s^t\left[\int_{\Sigma_{\tau}} 2(e^{\tau}+e^{-\tau})|\nabla u|^2-(e^{\tau}+1)^2e^{-\tau}\int_{\Sigma_{\tau}}K_{\Sigma_{\tau}}+2(e^{\tau}-e^{\-\tau})\int_{\Sigma_{\tau}}|\nabla^{\perp}|\nabla u||\right]d\tau\\
             &\qquad \quad +\int_s^{t}(e^{\tau}+1)e^{-\tau}\left(\int_{\Sigma_{\tau}}\frac{|\nabla^{\perp}|\nabla u||^2}{|\nabla u|^2}\right)d\tau+\frac{1}{2}\int_s^t(e^{\tau}+1)^2e^{-\tau}\left(\int_{\Sigma_r}\vert \mathring{A} \vert^2\right)d\tau\\
             &\geq \int_s^{t}\int_{\Sigma_{\tau}}\left[(e^{\tau}-2+e^{-\tau})|\nabla u|^2+2(e^{\tau}-e^{-\tau})|\nabla^{\perp}|\nabla u||+(e^{\tau}+2+e^{-\tau})\frac{|\nabla^{\perp}|\nabla u||^2}{|\nabla u|^2}\right] d\tau\\
             &\qquad \quad +\frac{1}{2}\int_s^t(e^{\tau}+1)^2e^{-\tau}\left(\int_{\Sigma_r}\vert \mathring{A} \vert^2\right)d\tau\\             &=\int_s^{t}e^{-\tau}\left\{\int_{\Sigma_{\tau}}\left[(e^{\tau}-1)|\nabla u|-(e^{\tau}+1)\frac{|\nabla^{\perp}|\nabla u||}{|\nabla u|}\right]^2\right\}d\tau+\frac{1}{2}\int_s^t(e^{\tau}+1)^2e^{-\tau}\left(\int_{\Sigma_r}\vert \mathring{A} \vert^2\right)d\tau\\
             &\geq 0.
\end{align*}
This proves the monotonicity of $Q$. 

\subsection{Monotonicity with negligible critical points}
Next we aim to prove the monotonocity of $Q$ function, under the assumption that the set of critical values of $u$ is measure zero.  We closely follow the strategy in \cite{agostiniani2022riemannian}. For the application to the stability of the volumetric Penrose inequality, we need to keep track of the second fundamental form term appearing in the monotonicity formula.  We present the details for the reader's convenience.

Consider the vector field $X$ defined earlier and rewrite it as
\[
X=2\left(e^u+2-e^{-u}\right)|\nabla u|\nabla u+Y.
\]
We note that $Y$ is only well defined away from the set of critical points. Consider a sequence of cut-off functions $\eta_{k}:[0,\infty)\to [0,1]$ satisfying:
\[
\eta_{k}(\tau)\equiv 0\quad \forall \tau\in [0,\frac{1}{2k}], \quad 0\leq \eta_k'(\tau)\leq 2k\quad \forall \tau\in[\frac{1}{2k},\frac{3}{2k}],\quad \eta_{k}(\tau)\equiv 1\quad \forall \tau\in[\frac{3}{2k},\infty). 
\]
With the cut-off functions, we define the  vector fields
\[
Y_k:=\eta_k(|\nabla u|)Y,\quad X_k=2\left(e^{2}+2-e^{-u}\right)|\nabla u|\nabla u+Y_k.
\]
Note that 
\begin{align*}
\eta_k'(|\nabla u|)\langle \nabla|\nabla u|,Y\rangle&=-(e^u+1)^2e^{-u}\eta_k'(|\nabla u|)\left(\frac{\Delta u}{|\nabla u|}\langle\nabla |\nabla u|,\nabla u\rangle-|\nabla|\nabla u||^2\right),\\
&=(e^u+1)^2e^{-u}\eta_k'(|\nabla u|)\left(\langle \nabla|\nabla u|,\frac{\nabla u}{|\nabla u|}\rangle^2+|\nabla|\nabla u||^2\right)\\
&\geq 0,
\end{align*}
provided $\eta_k'(|\nabla u|)\geq 0$, which follows from the definition.

Thus we obtain
\[
\div X_k\geq 2\left(e^u+e^{-u}\right)|\nabla u|^3+\eta_{k}(|\nabla u|)\div Y.
\]
Now for regular values $0<s<t<\infty$, we can find $k_0\in\mathbb Z_+$ such that for any 
$k\geq k_0$, we have
\begin{align*}
    Q(t)-Q(s)&=\int_{\Sigma_t}\langle X_k,\frac{\nabla u}{|\nabla u|}\rangle da_g-\int_{\Sigma_s}\langle X_k,\frac{\nabla u}{|\nabla u|}\rangle da_g\\
    &=\int_{\{s\leq u\leq t\}}\div X_k dV_g\\
    &\geq \int_{\{s\leq u\leq t\}}\left[2(e^u+e^{-u})|\nabla u|^3+\eta_k(|\nabla u|)\div Y\right]dV_g.
\end{align*}

Away from critical points of $u$, we can rewrite $\div Y$ as $P+D$: 
\begin{align*}
P&=(e^u+1)^2e^{-u}\left(\frac{|\nabla^{\Sigma_t}|\nabla u||^2}{|\nabla u|^2}+\vert\mathring{A}\vert^2\right),\\
D&=2(e^u-e^{-u})\langle\nabla|\nabla u|,\nabla u\rangle+(e^u+1)^2e^{-u}\ric(\frac{\nabla u}{|\nabla u|},\frac{\nabla u}{|\nabla u|}),
\end{align*}
where we used the Bochner's formula.
Note that $P$ is non-negative, and $D$ is bounded on every compact subset of $M$, so by applying the dominating convergence theorem to $\eta_k D$ and monotone convergence theorem to $\eta_k P$, we can  pass $k\to\infty$ and obtain that
\begin{align*}
    Q(t)-Q(s)&\geq \int_{\{s\leq u\leq t\}}\chi_{M\setminus \text{Crit}(u)}\div XdV_g.
\end{align*}
Similar arguments were used in \cite{agostiniani2022riemannian}.

Let $\mathcal N$ denote the set of critical values of $u$. Now we can apply the co-area formula and the traced Gauss equation to get
\begin{align*}
    Q(t)-Q(s)&\geq \int_{[s,t]\setminus \mathcal {N}}\left(\int_{\Sigma_{\tau}}\frac{\div X}{|\nabla u|}\right)d\tau\\
    &\geq \int_{[s,t\setminus\mathcal{N}]}\left[\int_{\Sigma_{\tau}} 2(e^{\tau}+e^{-\tau})|\nabla u|^2-(e^{\tau}+1)^2e^{-\tau}\int_{\Sigma_{\tau}}K_{\Sigma_{\tau}}+2(e^{\tau}-e^{\-\tau})\int_{\Sigma_{\tau}}|\nabla^{\perp}|\nabla u||\right]d\tau\\
             &\qquad \quad+\int_{[s,t]\setminus\mathcal{N}}(e^{\tau}+1)e^{-\tau}\left(\int_{\Sigma_{\tau}}\frac{|\nabla^{\perp}|\nabla u||^2}{|\nabla u|^2}\right)d\tau+\frac{1}{2}\int_s^t(e^{\tau}+1)^2e^{-\tau}\left(\int_{\Sigma_r}\vert \mathring{A} \vert^2\right)d\tau\\
             &\geq \int_{[s,t]\setminus\mathcal{N}}\int_{\Sigma_{\tau}}\left[(e^{\tau}-2+e^{-\tau})|\nabla u|^2+2(e^{\tau}-e^{-\tau})|\nabla^{\perp}|\nabla u||+(e^{\tau}+2+e^{-\tau})\frac{|\nabla^{\perp}|\nabla u||^2}{|\nabla u|^2}\right] d\tau\\
             &\qquad\quad +\frac{1}{2}\int_{[s,t]\setminus\mathcal{N}}(e^{\tau}+1)^2e^{-\tau}\left(\int_{\Sigma_r}\vert \mathring{A} \vert^2\right)d\tau\\             
             &=\int_{[s,t]\setminus\mathcal{N}}e^{-\tau}\left\{\int_{\Sigma_{\tau}}\left[(e^{\tau}-1)|\nabla u|-(e^{\tau}+1)\frac{|\nabla^{\perp}|\nabla u||}{|\nabla u|}\right]^2\right\}d\tau\\
             &\qquad\quad +\frac{1}{2}\int_{[s,t]\setminus\mathcal{N}}(e^{\tau}+1)^2e^{-\tau}\left(\int_{\Sigma_r}\vert \mathring{A} \vert^2\right)d\tau\\
             &\geq 0.   
\end{align*}
From the second line to the third line, we used the assumption that $H_2(M,\Sigma)=0$, which implies that all regular level sets of $u$ are connected. 

\subsection{An approximate monotonicity formula}\label{sec:epsilon-approx}
Since $3$-harmonic functions are only $C^{1,\alpha}$ in general, we cannot apply the Sard's theorem to exclude the case when the set of critical points is not measure zero.
In this subsection, we consider the most general case, with no assumption the set of critical points of $u$. As in \cite{agostiniani2022riemannian}, we proceed by $\epsilon$-approximation, which was first introduced in \cite{dibenedetto1983c1+}. Again, for later application to the stability problem, it is important to keep track of the 2nd fundamental form term appearing in the monotonicity formula. 

Suppose $T$ is a fixed regular value of $u$, and let $u_{\epsilon}$ denote the solution to the $\epsilon$-perturbed version of the $3$-harmonic equation:
\[
\begin{cases}
\div\left(\sqrt{|\nabla u_{\epsilon}|^2+\epsilon^2}\nabla u_{\epsilon}\right)  = 0, &\text{in } M_T:=\{0\leq u\leq T\}\\
u_{\epsilon} = 0, &\text{on } \Sigma \\
u_{\epsilon}=T, &\text{on } \{u=T\}. 
\end{cases}
\]
For each regular value $\tau$ of $u_{\epsilon}$, we use $\Sigma^{\epsilon}_{\tau}$ to denote the level set $\{u_{\epsilon} = \tau\}$.

By the divergence theorem, we have
\[
\int_{\Sigma^{\epsilon}_{t}}\sqrt{|\nabla u_{\epsilon}|^2+\epsilon^2}|\nabla u_{\epsilon}|da_g=4\pi(1+f(\epsilon)),
\]
where $f(\epsilon)$ satisfies $\lim_{\epsilon\to 0} f(\epsilon)=0$. The convergence of $f(\epsilon)$ to 0 follows from the fact that 
$u_{\epsilon}$ converges uniformly to $u$ in $W^{1,3}(M_T)$; see \cite{dibenedetto1983c1+}.

We then consider the vector field 
\[
X_{\epsilon}=2(e^{u_{\epsilon}}+2-e^{-{u_{\epsilon}}})\sqrt{|\nabla u_{\epsilon}|^2+\epsilon^2}\nabla u_{\epsilon}+Y_{\epsilon},
\]
where 
\[
Y_{\epsilon}=-(e^{u_{\epsilon}}+1)^2e^{-u_{\epsilon}}\left(\frac{\Delta u_{\epsilon}}{|\nabla u_{\epsilon}|}\nabla u_{\epsilon}-\nabla|\nabla u_{\epsilon}|\right).
\]
Suppose $x\in M_{T}$ is a regular point of $u_{\epsilon}$. Using the same techniques as before, we compute that
\[
\div X_{\epsilon}=2(e^{u_{\epsilon}}+e^{-u_{\epsilon}})\sqrt{|\nabla u_{\epsilon}|^2+\epsilon^2}|\nabla u_{\epsilon}|^2+\div Y_{\epsilon},
\]
and by Bochner's formula, we have
\begin{align*}
    \div Y_{\epsilon}&=2(e^{u_{\epsilon}}-e^{-u_{\epsilon}})\frac{(|\nabla u_{\epsilon}|^2+\epsilon^2/2)}{|\nabla u_{\epsilon}|^2+\epsilon^2}\langle\nabla |\nabla u_{\epsilon}|,\nabla u_{\epsilon}\rangle\\
    &+(e^{u_{\epsilon}}+1)^2e^{-u_{\epsilon}}|\nabla u_{\epsilon}|\left(\|A\|^2+\ric(\frac{\nabla u_{\epsilon}}{|\nabla u_{\eps}|},\frac{\nabla u_{\eps}}{|\nabla u_{\eps}|})+\frac{|\nabla^{\Sigma_t}|\nabla u_{\epsilon}||^2}{|\nabla u_{\epsilon}|^2}\right)\\
    &-(e^{u_{\eps}}+1)^2e^{-u_{\eps}}\frac{|\nabla u_{\eps}|(2|\nabla u_{\eps}|^2+\eps^2)}{(|\nabla u_{\eps}|^2+\eps^2)^2}|\nabla^{\perp}|\nabla u_{\eps}||^2.
\end{align*}
Using the same cut-off functions as in the previous subsection, we define
\begin{align*}
    X_{k,\epsilon}=2(e^{u_{\epsilon}}+2-e^{-u_{\epsilon}})\sqrt{|\nabla u_{\epsilon}|^2+\epsilon^2}\nabla u_{\epsilon}+Y_{k,\epsilon},
\end{align*}
where
\[
  Y_{k,\epsilon}=\eta_{k}(|\nabla u_{\epsilon}|)Y_{\epsilon}.
\]
As in the previous subsection, we have that
\[
\eta_{k}'(|\nabla u_{\eps}|)\langle \nabla|\nabla u_{\eps}|,Y_{\eps}\rangle\geq 0,
\]
which implies that
\[
\div Y_{k,\eps}\geq \eta_k(|\nabla u_{\eps}|)\div Y_{\eps}.
\]
Supposing $t$ is a regular value of $u_{\epsilon}$, we define
\[
Q_{\epsilon}(t):=\int_{\Sigma_t^{\epsilon}}\langle X_\epsilon,\frac{\nabla u_{\epsilon}}{|\nabla u_{\epsilon}|}\rangle da_g.
\]
For any regular values $0<s<t<\infty$, there exists $k_0\in\mathbb Z_+$, such that for any $k\geq k_0$, we have that
\begin{align*}
    &Q_{\epsilon}(t)-Q_{\epsilon}(s)\\
    &=\int_{\Sigma^{\epsilon}_t}\langle X_{k,\epsilon},\frac{\nabla u_{\epsilon}}{|\nabla u_{\epsilon}|}\rangle da_g-\int_{\Sigma^{\epsilon}_s}\langle X_{k,\epsilon},\frac{\nabla u_{\epsilon}}{|\nabla u_{\epsilon}|}\rangle da_g\\
    &=\int_{\{s\leq u_{\epsilon}\leq t\}}\div X_{k,\epsilon}\\
    &\geq \int_{\{s\leq u_{\eps}\leq t\}}2(e^{u_{\eps}}+e^{-u_{\eps}})\eta_k(|\nabla u_{\eps}|)\sqrt{|\nabla u_{\eps}|^2+\eps^2}|\nabla u_{\eps}|^2+\eta_k(|\nabla u_{\eps}|)\div Y_{\eps}.
\end{align*}
We rewrite $\div Y_{\eps}$ as $P_{\eps}+D_{\eps}$ for any point outside the critical point set, where
\begin{align*}
    P_{\eps}&=(e^{u_{\eps}}+1)^2e^{-u_{\eps}}\left(\vert\mathring{A}\vert^2+\frac{|\nabla^{\Sigma_t}|\nabla u_{\eps}||^2}{|\nabla u_{\eps}|^2}+\frac{\eps^2(|\nabla u_{\eps}|^2+\eps^2/2)}{(|\nabla u_{\eps}|^2)|\nabla u_{\eps}|^2}|\nabla^{\perp}|\nabla u_{\eps}||^2\right),\\
    D_{\eps}&=2(e^{u_{\eps}}-e^{-u_{\eps}})\frac{(|\nabla u_{\eps}|^2+\eps^2/2)}{|\nabla u_{\eps}|^2+\eps^2}\langle \nabla |\nabla u_{\eps}|,\nabla u_{\eps}\rangle+(e^{u_{\eps}}+1)^2e^{-u_{\eps}}\ric(\nu_{\eps},\nu_{\eps}).
\end{align*}
Note that $P_{\eps}$ is non-negative, and $D_{\eps}$ is bounded on every compact subset of $M_T$, so by applying the dominating convergence theorem to $\eta_k D_{\eps}$ and monotone convergence theorem to $\eta_k P_{\eps}$, we can pass $k\to\infty$. Together with the traced Gauss equation and Gauss-Bonnet formula, we obtain that
\begin{align*}
    Q_{\epsilon}(t)-Q_{\epsilon}(s)&\geq 2f(\epsilon)\int_{[s,t]\setminus\mathcal{N}}(e^\tau+e^{-\tau})d\tau\\
    &\qquad \quad +\int_{[s,t]\setminus\mathcal{N}}\int_{\Sigma^{\eps}_{\tau}}(e^{\tau}-1)^2e^{-\tau}\sqrt{|\nabla u_{\eps}|^2+\eps^2}|\nabla u_{\eps}|\\
    &\qquad \quad+\int_{[s,t]\setminus\mathcal{N}}\int_{\Sigma^{\eps}_{\tau}}2(e^{\tau}+1)(e^{\tau}-1)e^{-\tau}\frac{|\nabla u_{\eps}|^2+\eps^2/2}{|\nabla u_{\eps}|^2+\eps^2}\langle\nabla u_{\eps},\frac{\nabla u_{\eps}}{|\nabla u_{\eps}|}\rangle\\
    &\qquad \quad+\int_{[s,t]\setminus\mathcal{N}}\int_{\Sigma^{\eps}_{\tau}} (e^{\tau}+1)^2e^{-\tau}\frac{(|\nabla u_{\eps}|^2+3\eps^2/2)(|\nabla u_{\eps}|^2+\eps^2/2)}{(|\nabla u_{\eps}|^2+\eps^2)^2|\nabla u_{\eps}|^2}|\nabla^{\perp}|\nabla u_{\eps}||^2\\
    &\qquad \quad+\int_{[s,t]\setminus\mathcal{N}}\int_{\Sigma^{\eps}_{\tau}}\frac{1}{2}(e^{\tau}+1)^2e^{-\tau}\vert\mathring{A}\vert^2\\
    &\geq 2f(\epsilon)\int_{[s,t]\setminus\mathcal{N}}(e^\tau+e^{-\tau})d\tau+\int_{[s,t]\setminus\mathcal{N}}\int_{\Sigma^{\eps}_{\tau}}\frac{1}{2}(e^{\tau}+1)^2e^{-\tau}\vert\mathring{A}\vert^2\\
    &\qquad \quad+\int_{[s,t]\setminus\mathcal{N}}\int_{\Sigma^{\eps}_{\tau}}e^{-\tau}\big\{(e^{\tau}+1)\frac{\sqrt{(|\nabla u_{\eps}|^2+3\eps^2/2)(|\nabla u_{\eps}|^2+\eps^2/2)}}{(|\nabla u_{\eps}|^2+\eps^2)|\nabla u_{\eps}|}\nabla^{\perp}|\nabla u_{\eps}|\\
    &\qquad \quad+(e^{\tau}-1)\sqrt{\frac{|\nabla u_{\eps}|^2+\eps^2/2}{|\nabla u_{\eps}|^2+3\eps^2/2}}\nabla u_{\eps}\big\}^2\\   
    &\qquad \quad+\int_{[s,t]\setminus\mathcal{N}}\int_{\Sigma^{\eps}_{\tau}}(e^{\tau}-1)^2e^{-\tau}\frac{\eps^2|\nabla u|^2}{|\nabla u|^2+3\eps^2/2}\\
    &\geq 2f(\epsilon)\int_{[s,t]\setminus\mathcal{N}}(e^\tau+e^{-\tau})d\tau+\int_{[s,t]\setminus\mathcal{N}}\int_{\Sigma^{\eps}_{\tau}}\frac{1}{2}(e^{\tau}+1)^2e^{-\tau}\vert\mathring{A}\vert^2.
    \end{align*}
Thus $Q_\eps$ is almost monotone.

Using the same argument as in \cite{agostiniani2022riemannian}, we obtain the following lemma.
\begin{lem}
    For regular values of $t$,
    \[
    \lim_{\epsilon\to 0}Q_{\epsilon}(t)=Q(t).
    \]
\end{lem}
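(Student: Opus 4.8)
The plan is to rewrite both $Q_\epsilon(t)$ and $Q(t)$ as geometric integrals over the level sets and then reduce the statement to the convergence of these integrals, which near a regular level set follows from uniform elliptic estimates. Since $u_\epsilon \equiv t$ on $\Sigma_t^\epsilon$, evaluating $\langle X_\epsilon, \nu_\epsilon \rangle$ with $\nu_\epsilon = \nabla u_\epsilon / |\nabla u_\epsilon|$ and using the purely geometric identity
\[
\Delta u_\epsilon - \langle \nabla |\nabla u_\epsilon|, \nu_\epsilon \rangle = H_\epsilon |\nabla u_\epsilon|,
\]
where $H_\epsilon$ is the mean curvature of $\Sigma_t^\epsilon$, I would express
\[
Q_\epsilon(t) = 2(e^t + 2 - e^{-t}) \int_{\Sigma_t^\epsilon} \sqrt{|\nabla u_\epsilon|^2 + \epsilon^2}\,|\nabla u_\epsilon|\, da_g - (e^t+1)^2 e^{-t} \int_{\Sigma_t^\epsilon} H_\epsilon |\nabla u_\epsilon|\, da_g.
\]
The same identity applied to $X$ on $\Sigma_t$, together with $\int_{\Sigma_t} |\nabla u|^2\, da_g = 4\pi$, recovers $Q(t) = 8\pi(e^t + 2 - e^{-t}) - (e^t+1)^2 e^{-t} U(t)$ with $U(t) = \int_{\Sigma_t} H |\nabla u|\, da_g$. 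Comparing the two, the lemma reduces to showing that each of the two $\epsilon$-integrals converges to its unperturbed counterpart.

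The first integral is handled by the flux identity recorded above, namely $\int_{\Sigma_t^\epsilon} \sqrt{|\nabla u_\epsilon|^2 + \epsilon^2}\,|\nabla u_\epsilon|\, da_g = 4\pi(1 + f(\epsilon))$ with $f(\epsilon) \to 0$, so in the limit this term contributes exactly $8\pi(e^t + 2 - e^{-t})$. Everything therefore reduces to proving $U_\epsilon(t) := \int_{\Sigma_t^\epsilon} H_\epsilon |\nabla u_\epsilon|\, da_g \to U(t)$.

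To prove this, I would use crucially that $t$ is a regular value of $u$, so that $\Sigma_t$ is a compact surface contained in the open set $\{|\nabla u| > 0\}$; hence there is a neighborhood $V$ of $\Sigma_t$ and a constant $c > 0$ with $|\nabla u| \geq c$ on $V$. From the uniform $C^{1,\alpha}$ bounds and the convergence $u_\epsilon \to u$ of \cite{dibenedetto1983c1+,tolksdorf1983dirichletproblem}, one gets $c/2 \leq |\nabla u_\epsilon| \leq C$ on $V$ for all small $\epsilon$. On $V$ the coefficient matrix $a^{ij} = \sqrt{|\nabla u_\epsilon|^2 + \epsilon^2}\,\delta^{ij} + (|\nabla u_\epsilon|^2 + \epsilon^2)^{-1/2}\,\partial_i u_\epsilon\, \partial_j u_\epsilon$ of the regularized equation has eigenvalues bounded above and below by positive constants independent of $\epsilon$, and its entries are uniformly H\"older; interior Schauder estimates then give uniform $C^{2,\alpha}$ bounds, upgrading the convergence to $u_\epsilon \to u$ in $C^2(V)$. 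Consequently $t$ is a regular value of $u_\epsilon$ for small $\epsilon$, the level sets $\Sigma_t^\epsilon$ converge to $\Sigma_t$ as $C^2$ graphs over $\Sigma_t$, and $|\nabla u_\epsilon|$ and $H_\epsilon$ converge uniformly to $|\nabla u|$ and $H$. Parametrizing $U_\epsilon(t)$ over $\Sigma_t$ and passing to the limit gives $U_\epsilon(t) \to U(t)$, which finishes the proof.

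The main obstacle is precisely this last upgrade from $C^{1,\alpha}$ to $C^2$ convergence near $\Sigma_t$: one must verify that, on a neighborhood where $|\nabla u|$ is bounded away from zero, the $\epsilon$-regularized operators form a uniformly (non-degenerately) elliptic family with ellipticity constants independent of $\epsilon$, so that interior Schauder theory applies uniformly. Once second-derivative convergence near the regular level set is secured, convergence of the moving domain and of the mean-curvature integrand is routine.
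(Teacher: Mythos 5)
Your proof is correct and is essentially the argument the paper relies on: the paper disposes of this lemma by invoking the argument of \cite{agostiniani2022riemannian}, which is precisely your scheme of localizing near the regular level set where $|\nabla u|\ge c>0$, using the uniform $C^{1,\alpha}$ estimates of \cite{dibenedetto1983c1+,tolksdorf1983dirichletproblem} to get a uniform lower gradient bound for $u_\epsilon$, observing that the regularized operators are then uniformly non-degenerately elliptic with uniformly H\"older coefficients so that Schauder estimates upgrade the convergence to $C^{2}$ near $\Sigma_t$, and finally passing to the limit in the level-set integrals once $\Sigma_t^\epsilon\to\Sigma_t$ in $C^2$. Your preliminary decomposition of $Q_\epsilon(t)$ via the identity $H_\epsilon|\nabla u_\epsilon|=\Delta u_\epsilon-\langle\nabla|\nabla u_\epsilon|,\nu_\epsilon\rangle$ into the flux term, handled by the paper's own normalization $\int_{\Sigma_t^\epsilon}\sqrt{|\nabla u_\epsilon|^2+\epsilon^2}\,|\nabla u_\epsilon|\,da_g=4\pi(1+f(\epsilon))$, and the mean-curvature term is exactly consistent with the formulas already recorded in Section \ref{sec:epsilon-approx}, so no genuinely different route is taken.
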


Combining the above Lemma with the fact that $\lim_{\eps\to 0}|f(\eps)|=0$, the monotonicity formula \textbf{Proposition \ref{prop: main monotonicity}} then follows.

\section{Asymptotics at Infinity} 
\label{asymptotic}

In this section, we show that $U$ detects the mass to conformal capacity ratio as $t\to \infty$. 
First we prove a lemma. 

\begin{lem}
\label{adm}
Fix $(M,g) \in \mathcal M$. Then 
\[
\lim_{s\to \infty} \int_{\Sigma_s} f_\nu\, da = -{2\pi}\m,
\]
where $\nu$ denotes the Euclidean outward unit normal to $\Sigma_s$. 
\end{lem}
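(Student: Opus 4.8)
The plan is to reduce the flux of $f_\nu$ over the level sets $\Sigma_s$ to the corresponding flux over Euclidean coordinate spheres $S_r$, whose limit can be read off directly from the ADM mass. First I would record the conformally flat form of the mass integrand: writing $g_{ij} = f^4\delta_{ij}$ and differentiating, one finds $\sum_{i,j}(g_{ij,i}-g_{ii,j})\nu^j = -8f^3 f_\nu$, so that
\[
\m = \lim_{r\to\infty}\frac{-1}{2\pi}\int_{S_r} f^3 f_\nu\, da.
\]
I would then discard the factor $f^3$: since $|f^3 - 1|\le C|x|^{-1}$ and $|f_\nu|\le C|x|^{-2}$ while $\area(S_r) = 4\pi r^2$, the error $\int_{S_r}(f^3-1)f_\nu\, da$ is $O(r^{-1})$ and tends to zero. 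Hence $\int_{S_r} f_\nu\, da \to -2\pi\m$; in particular this coordinate-sphere flux converges.

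The core of the argument is to transfer this limit from $S_r$ to $\Sigma_s$. By the asymptotic relation $|x| = \cc(\Omega)e^s(1+o(1))$ established earlier, for each large $s$ the level set $\Sigma_s$ is trapped between the coordinate spheres $S_{\ur(s)}$ and $S_{\rr(s)}$, where $\ur(s), \rr(s)\to\infty$ and $\rr(s)/\ur(s)\to 1$. Applying the divergence theorem to $\nabla f$ on the region $D_s$ bounded inside by $S_{\ur(s)}$ and outside by $\Sigma_s$ gives
\[
\int_{\Sigma_s} f_\nu\, da - \int_{S_{\ur(s)}} f_\nu\, da = \int_{D_s}\Delta f\, dx.
\]
Here I would invoke the curvature hypothesis: since $R_g\ge 0$ is equivalent to $\Delta f\le 0$, the containment $D_s\subset A(\ur(s),\rr(s))$ (the annulus between the two spheres) forces
\[
0 \ge \int_{D_s}\Delta f\, dx \ge \int_{A(\ur(s),\rr(s))}\Delta f\, dx = \int_{S_{\rr(s)}} f_\nu\, da - \int_{S_{\ur(s)}} f_\nu\, da.
\]
The far right is a difference of coordinate-sphere fluxes both converging to $-2\pi\m$, so it vanishes in the limit; squeezing, $\int_{D_s}\Delta f\, dx\to 0$, and therefore $\int_{\Sigma_s} f_\nu\, da$ inherits the limit $-2\pi\m$ of $\int_{S_{\ur(s)}} f_\nu\, da$.

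The step I expect to be the main obstacle is precisely this last transfer. The decay $|f_{ij}|\le C|x|^{-3}$ only yields $|\Delta f|\le C|x|^{-3}$, which is \emph{not} integrable at infinity, so one cannot naively argue that the bulk integral over the region separating $\Sigma_s$ from a coordinate sphere is negligible. The sign condition $\Delta f\le 0$ coming from non-negative scalar curvature is exactly what rescues the argument: it lets the convergent (hence Cauchy) coordinate-sphere fluxes dominate the bulk term over any intermediate region, converting a potentially divergent volume integral into a quantity squeezed to zero. A minor technical point to verify along the way is that $S_{\ur(s)}$ is genuinely enclosed by $\Sigma_s$ and that the region enclosed by $\Sigma_s$ sits inside $B_{\rr(s)}$, which secures the containment $D_s\subset A(\ur(s),\rr(s))$ used above.
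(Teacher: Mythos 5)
Your proof is correct, but it takes a genuinely different route from the paper. The paper's proof is essentially a one-liner: it invokes Bartnik's theorem that the ADM mass can be computed along any sufficiently round exhaustion, applies this directly to the level sets $\Sigma_s$ (which are trapped between coordinate spheres of radius ratio tending to $1$), computes $\sum_{i,j}(g_{ij,i}-g_{ii,j})\nu^j = -8f^3 f_\nu$ as you do, and drops the factor $f^3$ using the decay of $f$. You instead work only with the coordinate-sphere definition of the mass and then transfer the flux limit from $S_r$ to $\Sigma_s$ by hand, via the divergence theorem for $\nabla f$ together with the superharmonicity $\Delta f \le 0$ (equivalent to $R_g \ge 0$) and a squeeze between the two coordinate-sphere fluxes. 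Your transfer step is sound: since $D_s \subset A(\ur(s),\rr(s))$ and $\Delta f \le 0$, one indeed has $0 \ge \int_{D_s}\Delta f \ge \int_{S_{\rr(s)}} f_\nu - \int_{S_{\ur(s)}} f_\nu \to 0$, and your diagnosis that the non-integrable decay $|\Delta f| \le C|x|^{-3}$ makes the sign condition essential (rather than a crude bulk estimate) is exactly right. The trade-off: the paper's argument is shorter but leans on a nontrivial external result and does not use the curvature hypothesis at this stage, whereas your argument is self-contained and elementary but consumes the hypothesis $R_g \ge 0$; as a side benefit, your monotonicity observation ($\int_{S_r} f_\nu\,da$ non-increasing and bounded) also furnishes the existence of the coordinate-sphere limit without citation. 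One could object only to minor loose ends, which you yourself flag: the containment $S_{\ur(s)} \subset \{\text{region enclosed by } \Sigma_s\}$ needs the connectedness/separation argument for large $s$, and the condition $\rr(s)/\ur(s)\to 1$ is not actually needed for the squeeze (convergence of both fluxes to the common limit suffices).
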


\begin{proof}
Bartnik \cite[Proposition 4.1]{bartnik1986mass} proved that the ADM mass can be computed along any sufficiently round exhaustion of $M$. In particular, we have 
\[
\m = \lim_{s\to \infty} \frac{1}{16\pi} \int_{\Sigma_s} \sum_{i,j} (g_{ij,i} - g_{ii,j})\nu^j\, da. 
\]
Now, using the fact that $g = f^4g_{\text{euc}}$, it is easy to compute that 
\[
\sum_{i,j} (g_{ij,i}-g_{ii,j})\nu^j = -8f^3 f_\nu. 
\]
Therefore 
\[
\lim_{s\to \infty} \frac{1}{16\pi} \int_{\Sigma_s} \sum_{i,j} (g_{ij,i}-g_{ii,j})\nu^j\, da = -\lim_{s\to \infty} \frac{1}{2\pi}\int_{\Sigma_s} f_\nu \, da,
\]
and the result follows. 
\end{proof}

The next proposition is the main result of this section. 

\begin{prop}
\label{asy}
Fix $(M,g) \in \mathcal M$. Then 
\[
\limsup_{t\to \infty} e^t(8\pi - U(t)) \le 8\pi \frac{\m}{\cc(\Omega)}. 
\]
\end{prop}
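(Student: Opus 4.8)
The plan is to integrate the differential inequality of Proposition~\ref{smooth-monotone} from a large time $t$ out to infinity, using Proposition~\ref{8pi}, and to read off the mass from the resulting boundary term. First I would reduce to the region where $u$ has no critical points: by the asymptotic expansion~(\ref{expansion}), $\grad u = \grad\log|x| + o(|x|^{-1})$ is nonvanishing for large $|x|$, so there is $t_0$ such that $u$ has no critical points on $\{u \ge t_0\}$ and every $\Sigma_\tau$ with $\tau \ge t_0$ is a smooth $2$-sphere. On this range the computation in the proof of Proposition~\ref{smooth-monotone} applies in the $g$-metric and gives
\[
U'(\tau) \le 4\pi - \frac14\int_{\Sigma_\tau} H_g^2\, da_g = \frac14\left(16\pi - \int_{\Sigma_\tau} H_g^2\, da_g\right).
\]
The goal is therefore to show that the right-hand side is at most $\frac{8\pi\m}{\cc(\Omega)}e^{-\tau} + o(e^{-\tau})$.

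The key step is to estimate $\int_{\Sigma_\tau} H_g^2\, da_g$ in Euclidean terms. Under $g = f^4 g_{\text{euc}}$ the mean curvature and area element transform as $H_g = f^{-2}(H + 4f_\nu/f)$ and $da_g = f^4\, da$, so that
\[
\int_{\Sigma_\tau} H_g^2\, da_g = \int_{\Sigma_\tau}\left(H + \frac{4f_\nu}{f}\right)^2 da = \int_{\Sigma_\tau} H^2\, da + \int_{\Sigma_\tau}\frac{8Hf_\nu}{f}\, da + \int_{\Sigma_\tau}\frac{16 f_\nu^2}{f^2}\, da.
\]
I would handle the three terms as follows. For the first, the Willmore inequality gives $\int_{\Sigma_\tau} H^2\, da \ge 16\pi$, so it contributes $16\pi - \int_{\Sigma_\tau} H^2\, da \le 0$ and can simply be discarded; crucially, its precise rate of convergence to $16\pi$ is never needed. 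For the cross term, Proposition~\ref{mc} gives $H = \frac{2}{\cc(\Omega)e^\tau}(1+o(1))$ uniformly on $\Sigma_\tau$, while the decay estimates defining $\mathcal M$ give $f_\nu/f = f_\nu + O(|x|^{-3})$; combined with Lemma~\ref{adm}, so that $\int_{\Sigma_\tau} f_\nu\, da \to -2\pi\m$, this yields
\[
\int_{\Sigma_\tau}\frac{8Hf_\nu}{f}\, da = -\frac{32\pi\m}{\cc(\Omega)}e^{-\tau} + o(e^{-\tau}).
\]
The third term is $O(|x|^{-4})\cdot\area(\Sigma_\tau) = O(e^{-2\tau}) = o(e^{-\tau})$. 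Putting these together,
\[
16\pi - \int_{\Sigma_\tau} H_g^2\, da_g \le \frac{32\pi\m}{\cc(\Omega)}e^{-\tau} + o(e^{-\tau}),
\]
and hence $U'(\tau) \le \frac{8\pi\m}{\cc(\Omega)}e^{-\tau} + o(e^{-\tau})$.

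To conclude, fix $\delta > 0$ and choose $t_0$ so large that $U'(\tau) \le \left(\frac{8\pi\m}{\cc(\Omega)} + \delta\right)e^{-\tau}$ for all $\tau \ge t_0$. Since $U(\tau) \to 8\pi$ by Proposition~\ref{8pi}, integrating from $t$ to $\infty$ gives, for $t \ge t_0$,
\[
8\pi - U(t) = \int_t^\infty U'(\tau)\, d\tau \le \left(\frac{8\pi\m}{\cc(\Omega)} + \delta\right)e^{-t}.
\]
Multiplying by $e^t$ and letting $t\to\infty$ shows $\limsup_{t\to\infty} e^t(8\pi - U(t)) \le \frac{8\pi\m}{\cc(\Omega)} + \delta$, and since $\delta$ is arbitrary the claim follows. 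The main obstacle is the $\int_{\Sigma_\tau} H^2\, da$ term: a priori one might fear that extracting the mass requires the second-order asymptotics of the Euclidean mean curvature of $\Sigma_\tau$. The decisive observation is that, because this term enters with the favorable sign, the Willmore inequality bounds it away for free, so that the entire contribution of order $e^{-\tau}$ comes from the cross term, where the ADM mass appears through Lemma~\ref{adm}. A secondary technical point is to verify that the $o(1)$ in Proposition~\ref{mc} is uniform over $\Sigma_\tau$, so that it may be pulled out of the integral paired against $f_\nu$.
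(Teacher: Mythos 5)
Your proof is correct, but it takes a genuinely different route from the paper's. The paper never differentiates $U$ in the $g$-metric in this step: it splits $U(t) = \int_{\Sigma_t} H\vert\grad u\vert\, da + \int_{\Sigma_t}\tfrac{4f_\nu}{f}\vert\grad u\vert\, da$ into its Euclidean mean-curvature part and its conformal-factor part, shows the first part is non-increasing (the computation of Proposition \ref{smooth-monotone} run against the flat background, where the scalar curvature term vanishes, plus the Euclidean Willmore inequality), and then, after integrating and using that the second part tends to zero at infinity, obtains $8\pi - U(t) \le -\int_{\Sigma_t}\tfrac{4f_\nu}{f}\vert\grad u\vert\, da$; Lemma \ref{adm} then evaluates this single boundary term. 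You instead keep the $g$-metric differential inequality $U' \le 4\pi - \tfrac14\int_{\Sigma_\tau} H_g^2\, da_g$, expand $\int_{\Sigma_\tau} H_g^2\, da_g = \int_{\Sigma_\tau}(H + 4f_\nu/f)^2\, da$ via the conformal transformation law, discard $\int_{\Sigma_\tau} H^2\, da$ by Willmore, extract the mass from the cross term, and only then integrate. The underlying mechanism is the same in both arguments --- the Willmore inequality absorbs the term whose precise decay rate is inaccessible, and the mass enters through $\int_{\Sigma_\tau} f_\nu\, da \to -2\pi\m$ --- but the bookkeeping differs, with two consequences worth noting. Your version invokes Proposition \ref{smooth-monotone} in the $g$-metric and therefore uses the hypothesis $R_g \ge 0$, whereas the paper's version uses only the Euclidean $3$-harmonicity of $u$ (conformal invariance of $\lap_3$) and flatness of the background, so the paper's argument shows Proposition \ref{asy} is independent of the scalar curvature assumption. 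Conversely, your version needs the transformation law $H_g = f^{-2}(H + 4f_\nu/f)$ and a pointwise-in-$\tau$ asymptotic for the cross term, where the paper needs only the value of $\int_{\Sigma_s}\tfrac{4f_\nu}{f}\vert\grad u\vert\, da$ at time $t$ together with its vanishing limit. Your handling of the technical points --- restricting to $\{u\ge t_0\}$ where $u$ has no critical points so that the smooth computation applies, uniformity of the $o(1)$ in Proposition \ref{mc}, and the $O(e^{-2\tau})$ bound on the quadratic term --- is sound.
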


\begin{proof}
Recall that $t$ is a regular value of $u$ whenever $t$ is sufficiently large.  We can re-write $U$ in terms of the Euclidean metric as 
\[
U(t) = \int_{\Sigma_t} H \vert \grad u\vert + \frac{4 f_\nu}{f} \vert \grad u \vert \, da. 
\]
Since $u$ is 3-harmonic with respect to the Euclidean metric, we can calculate as in the proof of Proposition \ref{smooth-monotone} to get 
\[
\frac{d}{dt} \int_{\Sigma_t} H \vert \grad u\vert \, da \le 4\pi\left(1 - \frac{1}{16\pi}\int_{\Sigma_t} H^2 \, da\right) \le 0,
\]
where the last inequality follows from the Euclidean Willmore inequality. 
Therefore we obtain 
\begin{equation}
\label{ineq2} 
8\pi - U(t) = \int_{t}^\infty \frac{d}{ds}U(s)\, ds  \le \int_t^\infty \frac{d}{ds}\left[\int_{\Sigma_s}  \frac{4f_\nu}{f} \vert \grad u\vert \, da\right] \, ds. 
\end{equation}
Observe that 
\[
\nu = \frac{\grad u}{\vert \grad u\vert} = (1+o(1)) \bd_r, \quad 
f_\nu = (1+o(1)) f_r.
\]
It follows that 
\[
\lim_{s\to \infty} \int_{\Sigma_s} \frac{4f_\nu}{f} \vert \grad u\vert\, da = 0.
\]
Hence returning to (\ref{ineq2}), we now have 
\[
8\pi - U(t) \le -\int_{\Sigma_t} \frac{4f_\nu}{f}\vert \grad u\vert\, da = -4 \left(\frac{1}{\cc(\Omega)e^{-t}}\right) (1+o(1))\int_{\Sigma_t} f_\nu\, da.
\]
Therefore, Lemma \ref{adm} shows that 
\[
\limsup_{t\to \infty} e^t(8\pi - U(t)) \le 8\pi \frac{\m}{\cc(\Omega)},
\]
as needed. 
\end{proof}

\section{Proof of Main Theorems}
\label{results}

In this section, we complete the proof of the main results. Theorem \ref{main} and Corollary \ref{vpi} are both straightforward given the formulas that have already been established. We begin with the proof of Theorem \ref{main}.

\begin{theorem}
For any $(M,g) \in \mathcal M$ one has 
$
\m \ge 2 \cc(\Omega).
$ 
Moreover, equality holds if and only if $M$ is Schwarzschild. 
\end{theorem}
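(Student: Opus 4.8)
The plan is to combine the upper bound on $U$ coming from Corollary \ref{U-bound} with the asymptotic estimate of Proposition \ref{asy}. First I would record the elementary identity
\[
8\pi - 8\pi\left(\frac{e^t-1}{e^t+1}\right) = \frac{16\pi}{e^t+1},
\]
so that multiplying by $e^t$ and letting $t\to\infty$ gives $e^t\bigl(8\pi - 8\pi\tfrac{e^t-1}{e^t+1}\bigr)\to 16\pi$. Since Corollary \ref{U-bound} yields $U(t)\le 8\pi\tfrac{e^t-1}{e^t+1}$ for every regular value $t$, we have $8\pi - U(t)\ge \tfrac{16\pi}{e^t+1}$, and therefore
\[
\liminf_{t\to\infty} e^t\bigl(8\pi - U(t)\bigr)\ge 16\pi.
\]

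Next I would invoke Proposition \ref{asy}, which bounds the very same quantity from above by $\limsup_{t\to\infty} e^t(8\pi - U(t))\le 8\pi\frac{\m}{\cc(\Omega)}$. Comparing the two displays forces $16\pi\le 8\pi\frac{\m}{\cc(\Omega)}$, which is exactly $\m\ge 2\cc(\Omega)$. This establishes the inequality.

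For the rigidity statement I would route the argument through the monotone quantity $Q$. Since $\Sigma$ is $g$-minimal we have $U(0)=0$, and using the identity $(e^t+1)^2e^{-t} = e^t+2+e^{-t}$ one computes $Q(0)=16\pi$ and rewrites $Q(t) = -16\pi e^{-t} + (e^t+2+e^{-t})\bigl(8\pi - U(t)\bigr)$. As $(e^t+2+e^{-t})/e^t\to 1$ and $8\pi - U(t)\ge 0$, this gives $\limsup_{t\to\infty} Q(t) = \limsup_{t\to\infty} e^t(8\pi - U(t))\le 8\pi\frac{\m}{\cc(\Omega)}$. If $\m = 2\cc(\Omega)$, this limsup equals $16\pi$, so the monotone function $Q$ satisfies $16\pi = Q(0)\le \lim_{t\to\infty} Q(t)\le 16\pi$ and is hence constant equal to $16\pi$. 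In particular $Q(t)=Q(0)$ for every regular value $t$, so the rigidity clause of Proposition \ref{prop: main monotonicity}, applied with $s=0$ (a regular value by the Hopf-type lemma), forces $(M,g)$ to be Schwarzschild outside $\Sigma_0 = \Sigma$, i.e. everywhere. The converse is immediate from the model computation of the preliminaries: on mass $m$ Schwarzschild one has $\cc(B_{m/2})=\tfrac{m}{2}$ and $\m=m$, so $\m = 2\cc(\Omega)$.

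The heavy machinery, namely the monotonicity formula together with its critical-point analysis (Proposition \ref{prop: main monotonicity}) and the asymptotic identification of the mass-to-capacity ratio (Proposition \ref{asy}), is already in place, so no genuine obstacle remains. The only points that demand care are the bookkeeping that converts the two one-sided limits into the clean inequality, and the verification that the equality case propagates all the way down to $s=0$, so that the rigidity statement applies on the entire manifold rather than merely outside some far-out level set.
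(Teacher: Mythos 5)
Your proposal is correct and follows essentially the same route as the paper: the inequality is obtained exactly as in the paper's proof by combining Corollary \ref{U-bound} with Proposition \ref{asy} and comparing the two one-sided limits of $e^t(8\pi - U(t))$, and your rigidity argument is just a direct monotone-limit rephrasing of the paper's proof by contradiction, resting on the same ingredients ($Q(0)=16\pi$, the monotonicity and rigidity clause of Proposition \ref{prop: main monotonicity}, and the bound $\limsup_{t\to\infty}Q(t)\le 8\pi\,\m/\cc(\Omega)$ from Proposition \ref{asy}). If anything your write-up is slightly cleaner on one point: you correctly work with the normalization $Q(0)=16\pi$ and show $Q$ is constant, whereas the paper's text contains a harmless slip in saying it suffices to show $Q\equiv 0$ and in writing $Q(s)=a>0$ --- its displayed estimates are consistent only if one reads $a=Q(s)-16\pi$, exactly the excess your version makes explicit.
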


\begin{proof} Fix $(M,g)\in \mathcal  M$.  According to Corollary \ref{U-bound}, we have
\[
e^t(8\pi - U(t)) \ge e^t\left(8\pi - 8\pi \left(\frac{e^t - 1}{e^t + 1}\right)\right) = 16\pi \left(\frac{e^t}{e^t+1}\right). 
\]
Passing to the limit as $t\to \infty$ on both sides and using Proposition \ref{asy}, we obtain 
\[
8\pi \frac{\m}{\cc(\Omega)} \ge 16\pi.
\]
This proves that $\m\ge 2\cc(\Omega)$.

It remains to handle the rigidity case. It is easy to verify that equality holds when $M$ is Schwarzschild. On the other hand, suppose that $\m = 2\cc(\Omega)$. By Proposition \ref{prop: main monotonicity}, to show that $M$ is Schwarzschild, it suffices to show that $Q\equiv 0$. Suppose to the contrary that $Q(s) = a > 0$ for some regular value $s$.  Then according to the monotonicity formula for $Q$, we have 
\[
a \le Q(t) = 8\pi(e^t + 2 - e^{-t}) - (e^t+1)^2 e^{-t} U(t) 
\]
for all regular values $t > s$. It follows that 
\[
U(t) \le 8\pi \left(\frac{e^{t}-1}{e^t+1}\right) - \frac{ae^t}{(e^t+1)^2}
\]
for all sufficiently large $t$. Therefore, we have 
\[
e^t(8\pi - U(t)) \ge e^t\left(8\pi - 8\pi\left(\frac{e^{t}-1}{e^t+1}\right) + \frac{ae^t}{(e^t+1)^2}\right) = 16\pi \left(\frac{e^t}{e^t+1}\right) + \frac{ae^{2t}}{(e^t+1)^2}. 
\]
Passing to the limit as $t\to \infty$ and using Proposition \ref{asy} gives 
\[
8\pi \frac{\m}{\cc(\Omega)} \ge 16\pi + a.
\]
This contradicts that $\m = 2\cc(\Omega)$. Therefore $M$ must be Schwarzschild. 
\end{proof}

Now we deduce the volumetric Penrose inequality. 

\begin{corollary}
For any manifold $(M,g)\in \mathcal M$, one has 
\[
\m \ge 2 \left(\frac{3}{4\pi} \vol(\Omega)\right)^{\frac 1 3}. 
\]
Equality holds if and only if $M$ is Schwarzschild. 
\end{corollary}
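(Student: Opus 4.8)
The plan is to deduce the corollary from Theorem \ref{main} together with an isocapacitary inequality: among all domains of a fixed Euclidean volume, the round ball minimizes the conformal capacity. Since Theorem \ref{main} already gives $\m \ge 2\cc(\Omega)$, and since the worked example $\cc(B_R) = R$ shows that the ball $\Omega^*$ with $\vol(\Omega^*) = \vol(\Omega)$ satisfies $\cc(\Omega^*) = \left(\tfrac{3}{4\pi}\vol(\Omega)\right)^{1/3}$, it suffices to prove
\[
\cc(\Omega) \ge \cc(\Omega^*) = \left(\frac{3}{4\pi}\vol(\Omega)\right)^{1/3}.
\]
Conceptually this is the statement that spherical symmetrization does not increase the conformal capacity. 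One slick way to see it is to realize $\cc$ as a renormalized limit of the Euclidean $3$-capacities of the condensers $(\Omega, B_{R_0})$ and to apply the P\'olya--Szeg\H{o} inequality to the associated capacitary potentials: the symmetrized condenser $(\Omega^*, B_{R_0})$ has no larger $3$-capacity, and the renormalization constant is explicit for concentric balls.

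I would, however, give a self-contained argument using the identities already established for the conformal capacity function $u$. For $t$ a regular value, let $V(t) = \vol(\{u \le t\})$ be the Euclidean volume of the region enclosed by $\Sigma_t$, so that $V(0) = \vol(\Omega)$. The coarea formula gives $V'(t) = \int_{\Sigma_t}|\grad u|^{-1}\,da$, while H\"older's inequality together with the identity $\int_{\Sigma_t}|\grad u|^2\,da = 4\pi$ yields
\[
\area(\Sigma_t) = \int_{\Sigma_t}|\grad u|^{2/3}\,|\grad u|^{-2/3}\,da \le \left(\int_{\Sigma_t}|\grad u|^2\,da\right)^{1/3}\left(\int_{\Sigma_t}|\grad u|^{-1}\,da\right)^{2/3} = (4\pi)^{1/3}V'(t)^{2/3}.
\]
Combining this with the Euclidean isoperimetric inequality $\area(\Sigma_t)^3 \ge 36\pi\,V(t)^2$ produces the differential inequality $V'(t) \ge 3V(t)$, so that $V(t)e^{-3t}$ is nondecreasing and hence $V(t)e^{-3t} \ge V(0) = \vol(\Omega)$. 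On the other hand, the established asymptotics $\area(\Sigma_t) = 4\pi\cc(\Omega)^2 e^{2t}(1+o(1))$ pin down $V(t) = \tfrac{4\pi}{3}\cc(\Omega)^3 e^{3t}(1+o(1))$, since $\Sigma_t$ is squeezed between concentric spheres of radii $\cc(\Omega)e^t(1+o(1))$. Letting $t \to \infty$ gives $\tfrac{4\pi}{3}\cc(\Omega)^3 \ge \vol(\Omega)$, which is precisely the desired isocapacitary inequality.

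The main obstacle I expect is the presence of critical points of $u$: the identity $\int_{\Sigma_t}|\grad u|^2\,da = 4\pi$ and the coarea expression for $V'$ are clean only at regular values, so the chain of inequalities must be justified at almost every $t$ and then integrated. This is exactly the difficulty overcome in Section \ref{critical}, and I would resolve it the same way, either by running the estimate on the $\epsilon$-regularized functions $u_\epsilon$ and passing to the limit, or by using that the critical set of the $3$-harmonic function $u$ has Lebesgue measure zero, so that $V$ is absolutely continuous with $V'(t) = \int_{\Sigma_t}|\grad u|^{-1}\,da$ for a.e.\ $t$; the integrated bound $V(t)e^{-3t} \ge \vol(\Omega)$ then still holds. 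For rigidity, equality in the corollary forces equality in Theorem \ref{main}, hence $M$ is Schwarzschild; conversely, Schwarzschild has $\Omega = B_{m/2}$, a ball, for which both the mass--capacity inequality and the isocapacitary inequality are equalities.
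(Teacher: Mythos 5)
Your proposal follows the same skeleton as the paper's proof: combine Theorem \ref{main} with the isocapacitary inequality $\cc(\Omega) \ge \cc(\Omega^*)$ and the computation $\cc(B_R)=R$, and handle rigidity by reducing to the equality case of Theorem \ref{main}. The difference is in the middle step: the paper simply cites this isocapacitary inequality for the conformal capacity (Xiao, \cite[Theorem 4.1(i)]{xiao2020geometrical}), whereas you prove it from scratch by a level-set argument along $u$, combining the coarea formula, H\"older's inequality against the flux identity $\int_{\Sigma_t}|\grad u|^2\,da = 4\pi$, and the Euclidean isoperimetric inequality to get $V'\ge 3V$, then matching the asymptotics $V(t)e^{-3t}\to \tfrac{4\pi}{3}\cc(\Omega)^3$. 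Your exponents and constants check out, and the argument is attractive because it is self-contained and uses only facts already established in the paper. The one caveat concerns your second proposed fix for critical points: it is not known (and the paper deliberately avoids assuming) that the critical set of a $3$-harmonic function in dimension three has Lebesgue measure zero --- this is exactly why Section \ref{critical} resorts to $\eps$-regularization rather than Sard-type reasoning. Fortunately you do not need that claim: the coarea formula applied to the critical set shows $\mathcal H^2(\{u=\tau\}\cap\{\grad u = 0\})=0$ for a.e.\ $\tau$, the flux identity extends to a.e.\ $\tau$ by testing the weak equation with $\chi(u)$, and any singular part of $V$ coming from a fat critical set only increases $V$, so the integral inequality $V(t)\ge V(0)+3\int_0^t V$ and Gr\"onwall still go through; alternatively your first fix (running the estimate on $u_\eps$ and passing to the limit) works as stated. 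With that substitution your argument is complete, and it trades the paper's external citation for a longer but self-contained proof.
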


\begin{proof}
The conformal capacity of $\Omega$ is always at least the conformal capacity of the ball of the same volume; c.f. \cite[Theorem 4.1(i)]{xiao2020geometrical}. Therefore we have 
\[
\m \ge 2 \cc(\Omega) \ge 2\cc(B_R) = 2R 
\]
where $\frac{4\pi}{3}R^3 = \vol(\Omega)$.  It is easy to verify that equality holds when $M$ is Schwarzschild. On the other hand, if equality holds then $\m = 2\cc(\Omega)$, and so $M$ must be Schwarzschild by the rigidity in the mass to conformal capacity inequality.
\end{proof}

Finally we prove the stability theorem for the volumetric Penrose inequality. Recall that $\alpha(\Omega)$ is the Fraenkel asymmetry of $\Omega$.

 \begin{theorem}\label{thm:stability}
There are constants $\eta_0 > 0$ and $C > 0$ such that the following holds.  Consider a manifold $(M,g)\in \mathcal M$. If  
 \[
 \frac{\m}{2\left(\frac{3}{4\pi}\vol(\Omega)\right)^{\frac 1 3}} \le 1+\eta
 \]
 for some $0 < \eta < \eta_0$, then $\alpha(\Omega) \le C\sqrt{\eta}$. 
 \end{theorem}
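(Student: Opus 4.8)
The plan is to bootstrap the rigidity statement of Theorem \ref{main} into a quantitative estimate by keeping track of the nonnegative terms in the monotonicity formula for $Q$. First I would convert the hypothesis into a bound on the excess of the $Q$-functional. Writing $R = \left(\tfrac{3}{4\pi}\vol(\Omega)\right)^{1/3}$, the hypothesis reads $\m \le 2R(1+\eta)$; combined with $\cc(\Omega) \ge \cc(B_R) = R$ this yields $\m - 2\cc(\Omega) \le \m - 2R \le 2R\eta$. Since $Q$ is monotone with $Q(0) = 16\pi$, and since $Q(\infty) = \lim_{t\to\infty} e^t(8\pi - U(t))$, Proposition \ref{asy} gives $Q(\infty) \le 8\pi \tfrac{\m}{\cc(\Omega)}$, and therefore
\[
Q(\infty) - Q(0) \le \frac{8\pi}{\cc(\Omega)}\big(\m - 2\cc(\Omega)\big) \le \frac{16\pi R}{\cc(\Omega)}\,\eta \le 16\pi\,\eta,
\]
using $R \le \cc(\Omega)$ in the last step.

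Second, I would feed this into the monotonicity computation of Section \ref{critical}. The crucial point is that the lower bound for $Q(t)-Q(s)$ established there retains the nonnegative term $\tfrac12\int_s^t (e^\tau+1)^2 e^{-\tau}\big(\int_{\Sigma_\tau}\vert\mathring{A}\vert^2\, da_g\big)\, d\tau$. Taking $s = 0$ (a regular value by the Hopf lemma) and $t\to\infty$ gives $\int_0^\infty w(\tau)\big(\int_{\Sigma_\tau}\vert\mathring{A}\vert^2\, da_g\big)\, d\tau \le C\eta$ with $w(\tau) = \tfrac12(e^\tau+1)^2 e^{-\tau}$ and $w(0) = 2 > 0$. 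When $u$ has critical points I would run this through the $\epsilon$-approximate monotonicity of Section \ref{sec:epsilon-approx}, which was arranged precisely to preserve the $\vert\mathring{A}\vert^2$ term, and then let $\epsilon\to 0$. Since $w$ is bounded below on a fixed window near the origin, a Chebyshev/averaging argument produces a regular value $s$ near $0$ for which $\int_{\Sigma_s}\vert\mathring{A}\vert^2\, da_g$ is small.

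Third, I would pass to the Euclidean picture and invoke De Lellis–M\"uller. Under the conformal change $g = f^4 g_{\text{euc}}$ the traceless second fundamental form satisfies the pointwise identity $\vert\mathring{A}_g\vert_g^2\, da_g = \vert\mathring{A}_e\vert_e^2\, da_e$, so the energy $\int_{\Sigma_s}\vert\mathring{A}\vert^2$ is the same in both metrics; hence $\Sigma_s$ has small Euclidean Willmore-type deficit. The rigidity estimate of \cite{de2005optimal} then produces a round sphere that is $W^{2,2}$-close to $\Sigma_s$, with distance controlled by $\big(\int_{\Sigma_s}\vert\mathring{A}\vert^2\big)^{1/2}$, whence the enclosed region $\Omega_s$ satisfies $\alpha(\Omega_s) \le C\big(\int_{\Sigma_s}\vert\mathring{A}\vert^2\big)^{1/2}$. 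It then remains to transfer this to $\Omega$ itself: since $\Omega \subset \Omega_s$ with $\Omega_s\setminus\Omega = \{0 \le u \le s\}$, I would bound $\alpha(\Omega) \le \alpha(\Omega_s) + C\,\vol(\Omega \triangle \Omega_s)/\vol(\Omega)$ by comparing $\Omega$ with a concentric rescaling of the optimal ball for $\Omega_s$, matching scales through the relation $\cc(\Omega_s) = e^s\,\cc(\Omega)$.

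The hard part will be this final transfer, and in particular reaching the sharp exponent $\sqrt{\eta}$. The collar volume $\vol(\{0\le u\le s\})$ is comparable to $(e^{3s}-1)\vol(\Omega) \approx 3s\,\vol(\Omega)$ even in the model, so it is small only when $s$ is small; yet the averaging above controls $\int_{\Sigma_s}\vert\mathring{A}\vert^2$ only by $C\eta/s$ on a window of width $s$, and naively balancing the two effects degrades the exponent. To recover $\alpha(\Omega)\le C\sqrt{\eta}$ I expect one must control the \emph{shape} of the collar rather than merely its volume: the additional nonnegative term
\[
\int_0^s e^{-\tau}\int_{\Sigma_\tau}\left[(e^\tau-1)\vert\nabla u\vert-(e^\tau+1)\frac{\vert\nabla^{\perp}\vert\nabla u\vert\vert}{\vert\nabla u\vert}\right]^2 da_g\, d\tau \le C\eta
\]
appearing in the monotonicity formula should force the intermediate level sets to remain nearly concentric spheres, so that $\Omega$ is close to a smaller concentric ball with error $O(\sqrt{\eta})$ regardless of the size of $s$. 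Making this shape control rigorous, along with the regularity bookkeeping required to select a good regular value $s$ when $u$ has critical points, is the principal technical obstacle.
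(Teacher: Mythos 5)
Your overall route is the paper's route, and your first two steps coincide with it almost verbatim: the hypothesis is converted into a bound $Q_\eps(T)-Q_\eps(0)\le C\eta$ using Proposition \ref{asy}, $Q(0)=16\pi$, and $\cc(\Omega)\ge R$; the $\eps$-approximate monotonicity of Section \ref{sec:epsilon-approx} supplies $\int_{[0,T]\setminus\mathcal N}(e^\tau+1)^2e^{-\tau}\bigl(\int_{\Sigma^\eps_\tau}\vert\mathring A\vert^2\,da\bigr)\,d\tau\le C\eta$; and a Chebyshev selection over the window $[0,\sqrt\eta]$ produces a slice $s\le\sqrt\eta$ with $\int_{\Sigma^\eps_s}\vert\mathring A\vert^2\,da\le C\sqrt\eta$, to which De Lellis--M\"uller is applied and the resulting region compared with $\Omega$. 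Two remarks on the details: the paper never sends $\eps\to0$ at this stage --- it fixes $\eps$ small and applies De Lellis--M\"uller to the smooth level set $\Sigma^\eps_s$ of $u_\eps$, which avoids the regularity problem your plan of ``letting $\eps\to0$ and then choosing a regular value of $u$'' would create, since level sets of $u$ need not be smooth --- and your observation that $\vert\mathring A\vert^2\,da$ is conformally invariant is correct and is exactly what licenses feeding a $g$-metric bound into the Euclidean De Lellis--M\"uller theorem (the paper uses this silently).

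The genuine gap is your final step, and you have flagged it yourself: with the De Lellis--M\"uller estimate in its linear form your balancing yields only $\alpha(\Omega)\le C\eta^{1/3}$, and the ``shape control'' you propose in order to upgrade this to $C\sqrt\eta$ is left entirely unproven, so the proposal does not prove the theorem as stated. The paper closes this step differently: it quotes De Lellis--M\"uller with \emph{quadratic} dependence, $\Vert\psi-x-r\operatorname{Id}\Vert_{W^{2,2}}\le Cr\Vert\mathring A\Vert_{L^2(\Sigma^\eps_s)}^2\le Cr\sqrt\eta$, which makes the ball-inclusion error $O(\sqrt\eta)$ in one stroke, and it avoids any collar-width estimate by pinching rather than by your relation $\vol(\Omega^\eps_s\setminus\Omega)\approx 3s\,\vol(\Omega)$: uniform convergence $u_\eps\to u$ gives $\cc(\Omega)\le\cc(\Omega^\eps_s)\le e^{2\sqrt\eta}\cc(\Omega)$, which combined with $2\cc(\Omega)\le\m\le2(1+\eta)\cc(\Omega)$ forces $r=(1\pm C\sqrt\eta)\m/2$, and then $\vol(\Omega^\eps_s)-\vol(\Omega)\le C\m^3\sqrt\eta$ follows from the two-sided volume bounds, giving $\alpha(\Omega)\le C\sqrt\eta$. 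You should know, however, that your hesitation is well founded: the theorem of \cite{de2005optimal} is linear in $\Vert\mathring A\Vert_{L^2}$, and linear is optimal (a normal graph over the round sphere by a small multiple $\delta$ of a degree-two spherical harmonic has $\Vert\mathring A\Vert_{L^2}\sim\delta$ and Fraenkel asymmetry $\sim\delta$), so the quadratic form invoked in the paper appears to be a misquotation; with the correct linear estimate the paper's own choices deliver only $\alpha(\Omega)\le C\eta^{1/4}$. In short, the step you identified as the principal obstacle is precisely where the paper's written argument is weakest, and some additional idea --- conceivably of the kind you sketch --- seems genuinely necessary to reach the exponent $1/2$.
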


 \begin{proof}
Suppose $t$ is a regular value of $u$. Then by definition of $Q(t)$ and $U(t)$, we have
    \[
    U(t)=\frac{8\pi e^t (e^t - e^{-t} + 2)}{(e^t+1)^2}-(e^t+1)^{-2}e^tQ(t).
    \]
Now fix a large regular value $T$ for $u$. Then, for any regular value $t \ge T$, we have 
\begin{align*}
e^t(8\pi-U(t)) = \frac{16\pi e^t}{(e^t+1)^2} + \frac{e^{2t}}{(e^t+1)^2}Q(t) \ge  \frac{16\pi e^t}{(e^t+1)^2} + \frac{e^{2t}}{(e^t+1)^2}Q(T).
\end{align*} 
Now, as in Section \ref{sec:epsilon-approx}, let $u_\eps$ be the $\eps$-regularization of $u$ in the domain $\{0\le u \le T\}$. Since $Q_\eps(T) \to Q(T)$ as $\eps\to 0$, we can select $\eps$ small enough that 
\begin{align*}
e^t(8\pi - U(t)) \ge \frac{16\pi e^t}{(e^t+1)^2} + \frac{e^{2t}}{(e^t+1)^2}[Q_\eps(T) - \eta]
\end{align*}
for all regular values $t \ge T$ of $u$.  Likewise, since $Q(0) = 16\pi$, by selecting $\eps$ small enough, we can ensure that $\vert Q_\eps(0)-16\pi\vert \le \eta$. Hence we in fact have 
\[
e^t(8\pi - U(t)) \ge \frac{16\pi e^t}{(e^t+1)^2} + \frac{e^{2t}}{(e^t+1)^2}[Q_\eps(T) - Q_\eps(0) + 16\pi - 2\eta]
\]
for all regular values $t\ge T$ of $u$. Sending $t\to \infty$ in this inequality we obtain 
\[
Q_\eps(T)-Q_\eps(0) \le 8\pi \frac{\m}{\cc(\Omega)} - 16\pi + 2\eta. 
\]
Hence, bounding the capacity from below by the volume and using the assumption of the theorem, we obtain 
\[
Q_\eps(T) - Q_\eps(0) \le C \eta.
\]
Here and in the following $C$ denotes a positive constant (that does not depend on $M$) which is allowed to change from line to line. 

Let $\mathcal N$ denote the set of critical values of $u_\eps$.  Then $\mathcal N$ has measure 0 by Sard's theorem. From Section \ref{sec:epsilon-approx}, we know that 
\begin{align*}
Q_\eps(T) - Q_\eps(0)  \ge 2f(\epsilon)\int_{[s,t]\setminus\mathcal{N}}(e^\tau+e^{-\tau})\, d\tau+\int_{[s,t]\setminus\mathcal{N}}\int_{\Sigma^{\eps}_{\tau}}\frac{1}{2}(e^{\tau}+1)^2e^{-\tau}\vert\mathring{A}\vert^2\, da\, d\tau.
\end{align*}
The first term on the right hand side tends to 0 as $\eps$ tends to 0. Therefore, by selecting $\eps$ small enough, we get 
\[
\int_{[0,T] \setminus \mathcal N} (e^\tau+1)^2 e^{-\tau} \int_{\Sigma^\eps_\tau} \vert \mathring A\vert^2\, da \, d\tau \le C\eta. 
\]
In particular, this implies that 
\[
\int_{[0,\sqrt \eta] \setminus \mathcal N}  \int_{\Sigma^\eps_\tau} \vert \mathring A\vert^2\, da \, d\tau \le C\eta.
\]
Thus there must be a number $s\in [0,\sqrt \eta]\setminus \mathcal N$ for which 
\[
\int_{\Sigma^\eps_s} \vert \mathring A\vert^2\, da \le C\sqrt \eta. 
\]
Let $\Omega^\eps_s$ be the region enclosed by $\Sigma^\eps_s$. By De Lellis and M\"uller \cite{de2005optimal}, there is a point $x\in \R^3$ and a radius $r > 0$ and a conformal paramaterization $\psi\colon S^2\to \Sigma^\eps_s$ satisfying 
 \[
 \|\psi - x - r\text{Id}\|_{W^{2,2}(S^2)} \le C r \|\mathring A\|_{L^2(\Sigma^\eps_s)}^2 \le C r \sqrt{\eta}. 
 \]
 In particular, by the continuous embedding of $W^{2,2}$ into $L^\infty$, this implies that we have the inclusions
 $
 B(x,r(1-C\sqrt{\eta})) \subset \Omega^\eps_s \subset B(x,r(1+C\sqrt \eta)).
 $

The next step is to estimate the value of $r$. Note that $u_\eps \to u$ uniformly as $\eps \to 0$. In particular, by selecting $\eps$ small enough, we can ensure that $\Omega^\eps_s$ is contained in the set $\{u \le 2\sqrt \eta\}$. Since $u$ is the conformal capacity function for $\Omega$, we therefore have 
\[
\cc(\Omega) \le \cc(\Omega^\eps_s) \le \cc(\{u\le 2\sqrt{\eta}\}) = e^{2\sqrt \eta} \cc(\Omega).
\]
By the mass-conformal capacity inequality and the assumption of the theorem, we have
 \[
 2\cc(\Omega) \le \m \le 2(1+\eta)\cc(\Omega).
 \]
 It follows that   
 \[
1-C\sqrt \eta \le e^{-2\sqrt \eta} \le \frac{\m}{2\cc(\Omega_s^\eps)} \le 1 + \eta \le 1+C\sqrt \eta.
 \]
 and therefore that 
 \[
 (1-C\sqrt \eta) \frac{\m}{2} \le \cc(\Omega^\eps_s) \le (1+C\sqrt{\eta})\frac{\m}{2}.
 \]
 Now, since $B(x,r(1+C\sqrt \eta))$ contains $\Omega^\eps_s$, it follows that 
 \[
 r(1+C\sqrt \eta) = \cc(B(x,r(1+C\sqrt \eta))) \ge \cc(\Omega^\eps_s) \ge (1 -C \sqrt{\eta})\frac{\m}{2}.
 \]
 Similarly, since $B(x,r(1-C\sqrt \eta))$ is contained in $\Omega^\eps_s$, it follows that 
 \[
 r(1-C\sqrt\eta)=\cc(B(x,r(1-C\sqrt\eta))) \le \cc(\Omega^\eps_s) \le (1+C\sqrt \eta)\frac{\m}{2}. 
 \]
 Therefore, we have 
 \[
 (1-C\sqrt{\eta}) \frac{\m}{2} \le\left(\frac{1-C\sqrt \eta}{1+C\sqrt{\eta}}\right)\frac{\m}{2} \le r \le \left(\frac{1+C\sqrt \eta}{1-C\sqrt{\eta}}\right)\frac{\m}{2} \le (1+C\sqrt{\eta}) \frac{\m}{2}
 \]
 At this point, we now have 
 \[
 B\left(x, (1-C\sqrt\eta)\frac{\m}{2}\right) \subset \Omega^\eps_s \subset B\left(x,(1+C\sqrt \eta)\frac{\m}{2}\right).
 \]
 This implies that 
 \[
 \vol\left(\Omega^\eps_s \operatorname{\Delta} B\left(x,\frac{\m}{2}\right)\right) \le C (\m)^3 \sqrt{\eta}. 
 \]
It remains to get a bound on the Fraenkel asymmetry of $\Omega$. 
 
 First note that the assumptions of the theorem ensure that 
 \[
 \vol(\Omega) \ge \frac{4\pi}{3} (1 - C\sqrt \eta) \left(\frac{\m}{2}\right)^3. 
 \]
 Since $\Omega \subset \Omega^\eps_s$, the conclusions of the previous paragraph imply that 
 \[
 \vol(\Omega) \le \frac{4\pi}{3}(1+C\sqrt \eta)\left(\frac{\m}{2}\right)^3. 
 \]
In particular, there is a radius $\tilde r$ satisfying 
\[
(1-C\sqrt \eta)\frac{\m}{2} \le \tilde r \le (1+C\sqrt \eta) \frac{\m}{2}
\]
and $\vol(B(x,\tilde r)) = \vol(\Omega)$.  Now observe that 
\[
\vol\left(\Omega^\eps_s \operatorname{\Delta} B(x,\tilde r)\right) \le C(\m)^3 \sqrt \eta.
\]
Since $\vol(\Omega \operatorname{\Delta} B(x,\tilde r)) \le \vol(\Omega \operatorname{\Delta} \Omega_s^\eps) + \vol(\Omega_s^\eps \operatorname{\Delta} B(x,\tilde r))$, to complete the proof, it therefore suffices to show that $\vol(\Omega \operatorname{\Delta}\Omega^\eps_s) \le C (\m)^3 \sqrt{\eta}$. This follows from 
\begin{align*}
\vol(\Omega \operatorname{\Delta} \Omega_s^\eps) &= \vol(\Omega_s) - \vol(\Omega) \\
&\le \frac{4\pi}{3}(1+C\sqrt \eta) \left(\frac{\m}{2}\right)^3 - \frac{4\pi}{3}(1-C\sqrt \eta)\left(\frac{\m}{2}\right)^3 \\
&= C (\m)^3 \sqrt\eta,
\end{align*}
and the proof of the theorem is complete.
\end{proof}

\appendix 

\section{General Asymptotically Flat Manifolds}
\label{general}

In this appendix, we discuss what can be said in the case of a general asympotically flat manifold. The extension of our results to asymptotically flat manifolds which are {\it harmonically flat at infinity} is relatively straightforward. 

\begin{defn} Let  $(M^3,g)$ be an asymptotically flat manifold with one end. We say $(M,g)$ is harmonically flat at infinity provided there is a compact set $K\subset M$ such that $M\setminus K$ is diffeomorphic to $\R^3\setminus B_r$ and, in the coordinates induced by this diffeomorphism, the metric satisfies $g = f^4 g_{\text{euc}}$ where $f$ is a harmonic function with $f\to 1$ at infinity. 
\end{defn}

\begin{prop}
\label{harmonic-flat-construction}
    Assume that $(M^3,g)$ is an asymptotically flat manifold which is harmonically flat at infinity. Further suppose that $M$ has non-negative scalar curvature, and that $\bd M$ is a minimal 2-sphere, and that $M$ contains no other compact minimal surfaces. Then there exists a function $u$ solving $\lap_3 u = 0$ in the $g$-metric with $u = 0$ on $\bd M$ and $u\to \infty$ at infinity. Moreover, with respect to the background Euclidean metric induced by the harmonically flat chart, $u$ satisfies the asymptotics $(\ref{expansion})$.
\end{prop}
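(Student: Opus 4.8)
The plan is to construct $u$ by exhaustion and then read off its asymptotics from the Euclidean theory near infinity. Fix the harmonically flat chart identifying $M\setminus K$ with $\R^3\setminus B_{r_0}$, so that $g=f^4 g_{\text{euc}}$ with $f$ harmonic and $f\to 1$ there. For each $R>r_0$ let $D_R\subset M$ be the compact region bounded by $\bd M$ and the coordinate sphere $S_R=\{|x|=R\}$, and let $w_R$ be the solution of the Dirichlet problem $\lap_3 w_R=0$ in $D_R$ (with respect to $g$), $w_R=0$ on $\bd M$, and $w_R=\log R$ on $S_R$. Existence and $C^{1,\alpha}$ regularity of $w_R$ are standard for the $3$-Laplacian \cite{dibenedetto1983c1+,tolksdorf1983dirichletproblem}, and the comparison principle gives $0\le w_R\le \log R$. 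The key point is conformal invariance of $\lap_3$ in dimension three: since $g=f^4 g_{\text{euc}}$ on $M\setminus K$, the function $w_R$ is also $3$-harmonic with respect to $g_{\text{euc}}$ on the annulus $\{r_0<|x|<R\}$, and there $\log|x|$ is an \emph{exact} Euclidean $3$-harmonic function. This reduces the analysis of the end to the flat model of \cite{colesanti2005brunn}.

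The heart of the argument is a uniform (in $R$) estimate $\big|w_R-\log|x|\big|\le C$ on a fixed exterior region $\{|x|\ge r_1\}$. Since $w_R=\log R=\log|x|$ on $S_R$, the functions $w_R$ and $\log|x|$ already agree on the outer boundary, so the entire difficulty is to control $w_R$ on a fixed inner sphere $\{|x|=r_1\}$ uniformly in $R$. This reflects the degeneration of the relative $3$-capacity: writing $w_R=(\log R)\,u_R$ with $u_R$ the capacitary potential equal to $0$ on $\bd M$ and $1$ on $S_R$, one has $\cp_3(\bd M,D_R)\to 0$ and hence $u_R|_{\{|x|=r_1\}}=O\big((\log R)^{-1}\big)$, so that $w_R|_{\{|x|=r_1\}}$ stays bounded. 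I would make this rigorous by constructing global sub- and supersolutions on $M$ that vanish (resp. are nonnegative) on $\bd M$ and agree with $\log|x|\mp C$ in the end; such comparison functions are obtained by cutting off $\log|x|$ in the harmonically flat chart and extending through $K$, using that $\log|x|$ is $3$-harmonic in the end so that the cutoff introduces only a compactly supported error of a definite sign. Comparing $w_R$ against the scaled barriers pins the growth to $\log|x|+O(1)$ uniformly in $R$. Interior $C^{1,\alpha}$ estimates \cite{dibenedetto1983c1+,tolksdorf1983dirichletproblem} then give local uniform bounds, so by Arzel\`a--Ascoli a subsequence converges in $C^1_{\mathrm{loc}}$ to a function $u$ which is $3$-harmonic on $M$, vanishes on $\bd M$, and satisfies $\log|x|-C\le u\le \log|x|+C$; in particular $u\to\infty$ and $u/\log|x|\to 1$ at infinity.

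For the asymptotics, note that $u$ is Euclidean $3$-harmonic on $\{|x|>r_0\}$ (again by conformal invariance) and satisfies $u(x)/\log|x|\to 1$. The expansion $(\ref{expansion})$ is then exactly the situation treated by Kichenassamy and V\'eron \cite[Theorem 1.1, Remarks 1.4--1.5]{kichenassamy1986singular}, applied at infinity (equivalently, at the isolated singularity produced by the conformal inversion $x\mapsto x/|x|^2$): their results yield $u(x)=\log|x|+a+o(1)$ together with $\grad u(x)=\grad\log|x|+o(|x|^{-1})$ for some constant $a\in\R$, which is precisely $(\ref{expansion})$.

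The main obstacle is the uniform barrier construction of the second paragraph: one must produce global comparison functions on $M$ with the correct logarithmic growth at infinity and the correct sign of $\lap_3$, and verify that the value of $w_R$ on a fixed sphere does not drift to infinity with $R$. I expect the geometric hypotheses (nonnegative scalar curvature, minimal $\bd M$, absence of other compact minimal surfaces) to play no role in this existence argument; they are recorded so that the monotonicity machinery of the main text applies verbatim to the resulting $u$.
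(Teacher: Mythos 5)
Your overall skeleton (exhaustion by Dirichlet problems, local $C^{1,\alpha}$ compactness, conformal invariance of $\lap_3$ in dimension three on the end, and Kichenassamy--V\'eron to extract the expansion (\ref{expansion})) is the same as the paper's, but the step you yourself flag as the main obstacle --- the uniform estimate $\vert w_R - \log\vert x\vert\vert \le C$, equivalently the control of $w_R$ on a fixed inner sphere --- has a genuine gap, and your proposed fix does not work as stated. Cutting off $\log\vert x\vert$ does \emph{not} introduce ``a compactly supported error of a definite sign'': for a smooth cutoff $\chi$, $\lap_3(\chi\log\vert x\vert)$ is a divergence-form expression with no sign. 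What does have a sign is a max-construction: $\max(\log(\vert x\vert/r_1),0)$, extended by zero across $K$ down to $\bd M$, is $3$-subharmonic and yields the lower bound $w_R\ge \log\vert x\vert-\log r_1$. But no matching upper barrier can be produced this way: a global $3$-superharmonic comparison function would have to dominate $\log R$ on $S_R$ (hence be unbounded) while being controlled on $\bd M$ after being extended across $K$, where $g$ is an arbitrary metric; min-constructions cap the function and destroy the growth, and restricting the comparison to the annulus $\{r_1\le \vert x\vert\le R\}$ requires knowing $w_R$ on $S_{r_1}$ uniformly in $R$, which is exactly the circularity you were trying to break. So the upper half of your key estimate is unproved.

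The capacity heuristic you mention in passing is in fact the correct mechanism, and it is what the paper uses, following Holopainen: the Harnack inequality for $3$-harmonic functions gives $M_i(\rho)\le \lambda\, m_i(\rho)$ with $\lambda$ independent of the exhaustion parameter; the potential-theoretic identity expressing the level $t$ of a $3$-capacitary potential as $\left(\cp_3(\bd M,D_{r_i})/\cp_3(\bd M,\{w_i\ge t\})\right)^{1/2}$ pins down the value on the inner sphere; and the comparison $\cp_3(\bd M,D_\rho)\approx 4\pi(\log\rho-\log s)^{-2}$ with the Euclidean model is obtained because the global diffeomorphism $\phi\colon M\to\R^3\setminus B_s$ is quasiconformal. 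Note that this last point is precisely where the paper \emph{does} use the geometric hypotheses you expect to be irrelevant: minimality of the $2$-sphere $\bd M$ together with the absence of other compact minimal surfaces give, via Huisken--Ilmanen, that $M$ is globally diffeomorphic to $\R^3$ minus a ball, which is what makes the global quasiconformal capacity comparison available. (One could plausibly avoid this by a serial-condenser estimate of the form $\cp_3(\bd M,D_\rho)^{-1/2}\le \cp_3(\bd M,D_{r_1})^{-1/2}+\cp_3(S_{r_1},D_\rho)^{-1/2}$ localized to the end, but as written your claim is at odds with the proof.) The paper also normalizes differently --- boundary data $1$ on $S_{r_i}$ and rescaling by $\cp_3(\bd M,D_{r_i})^{-1/2}$, rather than data $\log R$ --- and only establishes $C_1\log\vert x\vert\le u\le C_2\log\vert x\vert$, which already suffices for Kichenassamy--V\'eron; your normalization would be fine, and would even give the coefficient $1$ directly, once the uniform estimate is actually in place.
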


\begin{proof}
The proposition essentially follows by combining a result of  Holopainen \cite[Theorem 3.19]{holopainen1991nonlinear} with the asymptotic expansion due to Kichenassamy and V\'eron \cite{kichenassamy1986singular}. We will include the details for the reader's convenience. 

Suppose $(M^3,g)$ is asymptotically flat and harmonically flat at infinity.  Then there is a diffeomorphism $\phi\colon M\setminus K \to \R^3\setminus B_r$ and under this diffeomorphism the metric takes the form $g = \phi^*(f^4 g_{\text{euc}})$ where $f\to 1$ at infinity. In fact, since $\bd M$ is a minimal 2-sphere and $M$ contains no other compact minimal surfaces, it is known that $M$ is globally diffeomorphic to $\R^3$ minus a ball \cite[Lemma 4.1]{huisken2001inverse}. Hence we can assume that $\phi$ extends to a global diffeomorphism $\phi\colon M\to \R^3 \setminus B_{s}$ for some $s < r$. Of course, the identity $g = \phi^*(f^4 g_{\text{euc}})$ holds only outside of the set $K$.

Now we proceed, closely following the presentation of Holopainen \cite[Theorem 3.19]{holopainen1991nonlinear}. Choose a sequence $r_i\to \infty$. For $\rho \ge r$, let $S_\rho = \phi\inv(\bd B_{\rho})$ and let $D_\rho = \phi\inv(\R^3\setminus B_{\rho})$. Let $w_i$ be the 3-harmonic function with respect to the $g$ metric which is 0 on $\bd M$ and $1$ on $S_{r_i}$. We can extend $w_i$ to be defined on all of $M$ by setting $w_i = 1$ on $D_{r_i}$. 
Now fix $\rho \ge 2r$. For each sufficiently large $i$, define 
\begin{gather*}
m_i(\rho) = \inf\{w_i(x): x\in \bd B_{\rho}\},\\
M_i(\rho) = \sup\{w_i(x): x\in \bd B_{\rho}\}. 
\end{gather*}
It follows from the comparison principle that $w_i \ge m_i(\rho)$ in $D_\rho$  and  $w_i \le M_i(\rho)$ on $M\setminus D_\rho$. Hence we have 
$
\{w_i \ge M_i(\rho)\} \subset D_\rho \subset \{w_i \ge m_i(\rho)\}. 
$

For a set $A\subset M$ containing the end, define the relative capacity 
\[
\cp_3(\bd M,A) = \inf\left\{\int_M \vert \grad w\vert^3\, dv: w=0 \text{ on } \bd M \text{ and } w\ge 1\text{ on } A\right\}. 
\]
Then the previous set inclusions imply the following comparison of relative capacities:
\[
\cp_3(\bd M, \{w_i\ge M_i(\rho)\}) \le \cp_3(\bd M, D_\rho) \le \cp_3(\bd M,  \{w_i\ge m_i(\rho)\}). 
\]
The Harnack inequality for $3$-harmonic functions implies that there is a constant $\lambda > 0$ independent of $i$ such that $M_i(\rho) \le \lambda m_i(\rho)$. 
Next observe that 
\[
M_i(\rho) \le \lambda m_i(\rho) = \lambda \left(\frac{\cp_3(\bd M,D_{r_i})}{\cp_3(\bd M, \{w_i\ge m_i(\rho)\})}\right)^{1/2} \le \lambda \left(\frac{\cp_3(\bd M,D_{r_i})}{\cp_3(\bd M,D_\rho)}\right)^{1/2}.
\]
Likewise, we have 
\[
m_i(\rho) \ge \lambda^{-1}M_i(\rho) = \lambda^{-1} \left(\frac{\cp_3(\bd M,D_{r_i})}{\cp_3(\bd M, \{w_i\ge M_i(\rho)\})}\right)^{1/2} \ge \lambda^{-1} \left(\frac{\cp_3(\bd M,D_{r_i})}{\cp_3(\bd M,D_\rho)}\right)^{1/2}.
\]
Therefore, it follows that the functions $u_i = \cp_3(\bd M,D_{r_i})^{-1/2}w_i$ are uniformly bounded above and below (away from 0) on $S_\rho$. 

This proves that the functions $u_i$ are locally uniformly bounded. Moreover, the H\"older continuity estimates for 3-harmonic functions imply that the family $u_i$ is equicontinuous. Therefore, after passing to a subsequence, the functions $u_i$ converge locally uniformly to a limit $u$.  This limit function $u$ is 3-harmonic with respect to the $g$-metric and satisfies $u = 0$ on $\bd M$. 

It remains to show that $u$ has the desired asymptotic expansion. First, observe that the above inequalities imply that 
\[
\lambda\inv \cp_3(\bd M,D_\rho)^{-1/2} \le u(x) \le \lambda \cp_3(\bd M,D_\rho)^{-1/2}
\]
for all $x\in S_\rho$, provided $\rho$ is sufficiently large. Next, since $\phi$ is a smooth diffeomorphism, it is in particular a $K$-quasiconformal map for some $K \ge 1$. It follows that 
\[
K\inv \cp_3(\bd B_s, \R^3\setminus B_\rho) \le \cp_3(\bd M,D_\rho) \le K \cp_3(\bd B_s, \R^3\setminus B_\rho).
\]
It is straightforward to verify that 
\[
\cp_3(\bd B_s, \R^3\setminus B_\rho) = 4\pi \left(\frac{1}{\log \rho - \log s}\right)^2. 
\]
Hence, when $\vert x\vert$ is sufficiently large, we have 
\[
C_1 \log \vert x\vert \le u(x) \le C_2 \log \vert x\vert 
\]
for some positive constants $C_1,C_2$ depending on $M$. Since $u$ is also 3-harmonic with respect to the Euclidean metric, the result of Kichenassamy and V\'eron \cite{kichenassamy1986singular} now implies that $u$ has the asymptotic expansion (\ref{expansion}). This completes the proof. 
\end{proof}

Given Proposition \ref{harmonic-flat-construction}, it is now straightforward to define the conformal capacity of an asymptotically flat manifold $(M^3,g)$ which is harmonically flat at infinity. Indeed, given the function $u$ from Proposition \ref{harmonic-flat-construction}, we let $\cc(\bd M) = e^{-a}$ where $u = \log \vert x\vert + a + o(1)$ is the constant in the asymptotic expansion of $u$. 
All constructions in the paper now carry through essentially unchanged to yield an inequality between the mass and the conformal capacity. This gives the following theorem. 

\begin{theorem}
    Assume that $(M^3,g)$ is asymptotically flat and harmonically flat near infinity. Assume that $M$ has non-negative scalar curvature, and that $\bd M$ is a minimal 2-sphere, and that $M$ contains no other closed minimal surfaces.   Then $\m \ge 2\cc(\bd M)$. 
\end{theorem}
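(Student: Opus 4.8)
The plan is to observe that the three engines driving the proof of Theorem \ref{main}—the existence of a suitable $3$-harmonic function, the monotonicity of $Q$, and the asymptotic estimate of Proposition \ref{asy}—were each established under hypotheses that the harmonically flat setting satisfies, so that the argument transplants with only bookkeeping changes. First I would invoke Proposition \ref{harmonic-flat-construction} to produce the function $u$ solving $\lap_3 u = 0$ in the $g$-metric with $u = 0$ on $\bd M$ and, in the harmonically flat chart, $u(x) = \log\vert x\vert + a + o(1)$; by definition $\cc(\bd M) = e^{-a}$. All the preliminary asymptotics of Section 2 (Propositions \ref{mc} through \ref{8pi}) for $\vert\grad u\vert$, $\area(\Sigma_t)$, and the mean curvature $H$ depend only on the expansion (\ref{expansion}), which now holds in this chart, so in particular $U(t) \to 8\pi$ as $t\to\infty$.

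Next I would supply the topological hypothesis of Proposition \ref{prop: main monotonicity}. Since $\bd M$ is a minimal $2$-sphere and $M$ contains no other closed minimal surfaces, the lemma of Huisken and Ilmanen \cite[Lemma 4.1]{huisken2001inverse} shows that $M$ is globally diffeomorphic to $\R^3$ minus a ball; hence $H_2(M,\Sigma) = 0$ and every regular level set of $u$ is a connected $2$-sphere. Together with $R_g \ge 0$, this is exactly what Proposition \ref{prop: main monotonicity} requires, and since that proposition and Corollary \ref{U-bound} were stated and proved for general asymptotically flat manifolds with compact minimal boundary, they apply verbatim to yield
\[
U(t) \le 8\pi\left(\frac{e^t-1}{e^t+1}\right)
\]
for all regular values $t\in(0,\infty]$, where $U(t) = \int_{\Sigma_t} H_g\vert\grad^g u\vert\, da_g$.

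Then I would re-run the asymptotic analysis of Section \ref{asymptotic}. The essential point is that for all sufficiently large $t$ the level set $\Sigma_t$ lies entirely within the harmonically flat region, where $g = f^4 g_{\text{euc}}$ with $f$ harmonic and $f\to 1$; there the Euclidean formula $U(t) = \int_{\Sigma_t} H\vert\grad u\vert + \tfrac{4f_\nu}{f}\vert\grad u\vert\, da$ is valid, and the decay $f = 1 + O(\vert x\vert^{-1})$, $f_\nu = O(\vert x\vert^{-2})$ of a harmonic conformal factor is all that the proof of Proposition \ref{asy} uses. Likewise Lemma \ref{adm} goes through: Bartnik's theorem computes $\m$ along the round exhaustion $\{\Sigma_s\}$, and on each $\Sigma_s$ one has $\sum_{i,j}(g_{ij,i}-g_{ii,j})\nu^j = -8f^3 f_\nu$ precisely because $g = f^4 g_{\text{euc}}$ near infinity. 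Hence Proposition \ref{asy} carries over and gives $\limsup_{t\to\infty} e^t(8\pi - U(t)) \le 8\pi\,\m/\cc(\bd M)$. Combining this with Corollary \ref{U-bound} as in Theorem \ref{main}, namely
\[
e^t(8\pi - U(t)) \ge 16\pi\left(\frac{e^t}{e^t+1}\right),
\]
and letting $t\to\infty$ yields $8\pi\,\m/\cc(\bd M) \ge 16\pi$, that is $\m \ge 2\cc(\bd M)$.

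I expect the main obstacle to be exactly the verification in the previous paragraph: confirming that none of the computations in Sections 2 and \ref{asymptotic} secretly exploited conformal flatness of $g$ on all of $M$, but only near infinity. Concretely, one must check that the conformal invariance of the $3$-Laplacian is invoked only in the harmonically flat chart (so that $u$ is simultaneously $g$- and $g_{\text{euc}}$-harmonic there, which is what legitimizes the Euclidean formula for $U$ and the expansion (\ref{expansion})), and that the constant $a$ defining $\cc(\bd M)$ is insensitive to the compact core $K$. Everything in the interior of $M$ is handled by the $g$-intrinsic monotonicity of Section \ref{critical}, which never used conformal flatness, so once this localization is in place the result follows by direct citation of the general statements already proved.
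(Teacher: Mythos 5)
Your proposal is correct and follows exactly the route the paper takes: the paper proves Proposition \ref{harmonic-flat-construction} to obtain $u$ with the expansion (\ref{expansion}), defines $\cc(\bd M)=e^{-a}$, and then simply asserts that "all constructions in the paper now carry through essentially unchanged." Your write-up is a faithful (and more detailed) elaboration of that assertion, correctly identifying the two points needing verification — the topological hypothesis $H_2(M,\Sigma)=0$ via Huisken--Ilmanen, and the fact that the Euclidean computations of Sections 2 and \ref{asymptotic} only require conformal flatness near infinity, where the large level sets of $u$ lie.
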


For a general asymptotically flat manifold, we are unsure whether there exists a conformal capacity function $u$ satisfying the asymptotics (\ref{expansion}), and hence we are unable to define the conformal capacity. The primary issue is that when the metric $g$ is not conformal to Euclidean near infinity, we cannot apply the result of Kichenassamy and V\'eron \cite{kichenassamy1986singular} to deduce the asymptotic expansion of $u$. In this regard, we note that Benatti, Fogagnolo, and Mazzieri \cite{benatti2024asymptotic} have recently obtained the asymptotic expansion of the $p$-capacitary potential on a general asymptotically flat manifold when $p$ is strictly between 1 and the dimension. It would be very interesting to know whether this can be adapted to the conformal case $p=3$ in dimension 3. 

Despite this difficulty, it is well-known that every asymptotically flat manifold with non-negative scalar curvature can be approximated by a manifold which is harmonically flat at infinity. Hence, for the purpose of obtaining a lower bound on the mass, the harmonically flat case is already  sufficient.

\bibliographystyle{plain}
\bibliography{bibliography.bib}

\end{document}